\begin{document}

\title{Minimax principle for eigenvalues of dual quaternion Hermitian matrices and generalized inverses of dual quaternion matrices}
\author{ Chen Ling\footnote{%
    Department of Mathematics, Hangzhou Dianzi University, Hangzhou, 310018, China
    ({\tt
macling@hdu.edu.cn}). This author's work was supported by Natural Science Foundation of China (No. 11971138).}
    \and
    Liqun Qi\thanks{Department of Applied Mathematics, The Hong Kong Polytechnic University, Hung Hom,
    Kowloon, Hong Kong; Department of Mathematics, Hangzhou Dianzi University, Hangzhou, 310018, China ({\tt maqilq@polyu.edu.hk}).} \and and \
Hong Yan\thanks{Department of Electrical Engineering, and Centre for Intelligent Multidimensional Data Analysis, City University of Hong Kong, Kowloon, Hong Kong ({\tt h.yan@cityu.edu.hk}).  This author's work was supported by Hong Kong Research Grants Council (Project 11204821), Hong Kong Innovation and Technology Commission (InnoHK Project CIMDA) and City University of Hong Kong (Project 9610034). }
}
\date{\today}
\maketitle

\begin{abstract}

 Dual quaternions can represent rigid body motion in 3D spaces, and have found wide applications
in robotics, 3D motion modelling and control, and computer graphics. In this paper, we introduce three different right linear independency concepts for a set of dual quaternion vectors, and study some related basic properties for dual quaternion vectors and dual quaternion matrices. We present a minimax principle for eigenvalues of dual quaternion Hermitian matrices. Based upon a newly established Cauchy-Schwarz inequality for dual quaternion vectors and singular value decomposition of dual quaternion matrices, we propose an inequality for singular values of dual quaternion matrices. Finally, we introduce the concept of generalized inverses of dual quaternion matrices, and present  necessary and sufficient conditions for a dual quaternion matrix to be one of four types of generalized inverses of another dual quaternion matrix.

\medskip


  \textbf{Key words.} Dual quaternion vector, dual quaternion matrix, linear independence,  eigenvalue, minimax principle, generalized inverse.

\end{abstract}

\renewcommand{\Re}{\mathds{R}}
\newcommand{\rank}{\mathrm{rank}}
\renewcommand{\span}{\mathrm{span}}
\newcommand{\X}{\mathcal{X}}
\newcommand{\A}{\mathcal{A}}
\newcommand{\I}{\mathcal{I}}
\newcommand{\B}{\mathcal{B}}
\newcommand{\C}{\mathcal{C}}
\newcommand{\OO}{\mathcal{O}}
\newcommand{\e}{\mathbf{e}}
\newcommand{\0}{\mathbf{0}}
\newcommand{\dd}{\mathbf{d}}
\newcommand{\ee}{\mathbf{e}}
\newcommand{\ii}{\mathbf{i}}
\newcommand{\jj}{\mathbf{j}}
\newcommand{\kk}{\mathbf{k}}
\newcommand{\va}{\mathbf{a}}
\newcommand{\vb}{\mathbf{b}}
\newcommand{\vc}{\mathbf{c}}
\newcommand{\vg}{\mathbf{g}}
\newcommand{\vr}{\mathbf{r}}
\newcommand{\vt}{\rm{vec}}
\newcommand{\vx}{\mathbf{x}}
\newcommand{\vy}{\mathbf{y}}
\newcommand{\vu}{\mathbf{u}}
\newcommand{\vv}{\mathbf{v}}
\newcommand{\y}{\mathbf{y}}
\newcommand{\vz}{\mathbf{z}}
\newcommand{\T}{\top}

\newtheorem{Thm}{Theorem}[section]
\newtheorem{Def}[Thm]{Definition}
\newtheorem{Ass}[Thm]{Assumption}
\newtheorem{Lem}[Thm]{Lemma}
\newtheorem{Prop}[Thm]{Proposition}
\newtheorem{Cor}[Thm]{Corollary}
\newtheorem{example}[Thm]{Example}
\newtheorem{ReK}[Thm]{Remark}

\section{Introduction}\label{Introd}
Dual quaternions were introduced by Clifford \cite{Cl73} in 1873, and have become one of the core knowledge of Clifford algebra or geometric algebra. In mathematics, the dual quaternions are an $8$-dimensional real algebra isomorphic to the tensor product of the quaternions and the dual numbers. In mechanics, the dual quaternions are applied as a number system to represent rigid transformations in three dimensions. Similar to the way that rotations in 3D space can be represented by unit quaternions, rigid motions in 3D space can be represented by unit dual quaternions. Therefore, 
dual quaternions have found wide applications in engineering fields, such as 3D computer graphics, robotics control, computer vision, and neuroscience and biomechanics \cite{BLH19,CKJC16,Da99,LLB13,MKO14,MXXLY18,PV10,TRA11,WYL12,WZ14,WDW23}. 

The study of dual quaternion matrices and their applications in formation control in 3D space can be traced back to the Ph.D. thesis of X. Wang \cite{Wx11} in 2011. In an unpublished manuscript, Wang, Yu and Zheng \cite{WYZ} introduced three classes of dual quaternion matrices for studying multi-agent formation control, namely relative configuration adjacency matrices, logarithm adjacency matrices and relative twist adjacency matrices. Recently, Qi and Luo \cite{QL21} showed that dual quaternion Hermitian matrices have very nice spectral properties. They showed that an $n \times n$ dual quaternion Hermitian matrix has exactly $n$ right eigenvalues, which are all dual numbers and are also the left eigenvalues of this Hermitian matrix. Thus, we may simply call them eigenvalues of that Hermitian matrix. With the help of the total order of dual numbers introduced in \cite{QLY21}, we know that a dual quaternion Hermitian matrix is positive semi-definite or positive definite if and only if its $n$ dual number eigenvalues are nonnegative or positive, respectively.  
Moreover, singular value decomposition of qual quaternion matrices was established in \cite{QL21}, and von Neumann type trace inequality for dual quaternion matrices and their low-rank approximations also were studied \cite{LHQ22,LHQF23}. Then, Qi, Wang and Luo \cite{QWL22} showed that the relative configuration adjacency matrix and the logarithm adjacency matrix are dual quaternion Hermitian matrices. These dual quaternion matrices play important roles in multi-agent formation control, and the related spectral and positive semi-definite properties pave the way for us to study stability
issues of the multi-agent formation control problem. The work in \cite{QWL22} establish a bridge for the research on dual quaternion matrices and multi-agent formation control.  Furthermore, Cui and Qi \cite{CQ23} proposed a power method to computing eigenvalues of a dual quaternion Hermitian
matrix, and applied it to the simultaneous location and mapping problem. 

This paper discusses some properties of dual quaternion vectors and dual quaternion matrices, such as the right linear independency of dual quaternion vectors, the minimax principle related with the eigenvalues of dual quaternion Hermitian matrices, and the generalized inverses of dual quaternion matrices, etc.

In the next section, we present some preliminary knowledge on quaternions, dual numbers and dual quaternions. In Section \ref{RighIN-Rank}, we introduce three different right linear independency concepts for dual quaternion vectors. Based upon these definitions, we study some basic properties for dual quaternion vectors and dual quaternion matrices. We present a minimax principle for eigenvalues of dual quaternion Hermitian matrices in Section \ref{MinimaxPrin}. Moreover, relying on a newly established Cauchy-Schwarz inequality for dual quaternion vectors and singular value decomposition of dual quaternion matrices presented in \cite{QL21}, we propose an inequality for singular values of dual quaternion matrices. In Section \ref{GenInverseDQM}, we introduce the concept of generalized inverses of dual quaternion matrices, and present necessary and sufficient conditions for $X=A_{\rm st}^\dag-A_{\rm st}^\dag A_{\rm I}A_{\rm st}^\dag\epsilon$ (defined in Proposition \ref{DQ2I}) to be the $\{1\}$-, $\{3\}$- and $\{4\}$-dual quaternion generalized inverses of a dual quaternion matrix $A$. Conclusions are made in Section \ref{Conclusion}.

\section{Preliminaries}\label{Prelim}
\subsection{Quaternions and quaternion matrices}
Let $\mathbb{R}$ denote the field of the real numbers, and denote by $\mathbb Q$ the four-dimensional vector space of the quaternions over $\mathbb{R}$, with an ordered basis, denoted by $\ee, \ii, \jj$ and $\kk$. A quaternion $q\in \mathbb{Q}$ has the form
$q = q_0\ee + q_1\ii + q_2\jj + q_3\kk$, where $q_0, q_1, q_2$ and $q_3$ are real numbers, $\ii, \jj$ and $\kk$ are three imaginary units of quaternions, satisfying $\ii^2 = \jj^2 = \kk^2 =\ii\jj\kk = -1$, $\ii\jj = -\jj\ii = \kk$, $\jj\kk = - \kk\jj = \ii$, $\kk\ii = -\ii\kk = \jj$.   In the following, we will omit the real unit $\ee$, and simply denote $q \in \mathbb Q$ as $q = q_0 + q_1\ii + q_2\jj + q_3\kk$.
The real part of $q$ is ${\rm Re}(q):= q_0$, 
the imaginary part of $q$ is ${\rm Im}(q):= q_1\ii + q_2\jj +q_3\kk$, the conjugate of $q$ is $\bar q := q_0 -q_1\ii -q_2\jj -q_3\kk$, and the norm of $q$ is $|q|:=\sqrt{\bar qq}=\sqrt{q_0^2+q_1^2+q_2^2+q_3^2}$.
A quaternion is called imaginary if its real part is zero. The multiplication of quaternions satisfies the distribution law, but is noncommutative. In fact, $\mathbb{Q}$ is an associative but non-commutative algebra of four rank over $\mathbb{R}$, called quaternion skew-field.

Denote by $\mathbb{Q}^{m\times n}$  the collection of all $m\times n$ matrices with quaternion entries. Specially, $\mathbb{Q}^{m\times 1}$ is abbreviated as $\mathbb{Q}^m$, which is the collection of quaternion column vectors with $m$ components. We denote the quaternion column vectors by boldfaced lowercase letters (e.g., ${\bf u,v,\ldots}$). 
A matrix $A\in \mathbb{Q}^{m\times n}$ can be written as $A = A_0 + A_1\ii + A_2\jj + A_3\kk$, where $A_0, A_1, A_2$ and $A_3$ are real matrices. For $A=(a_{ij})\in \mathbb{Q}^{m\times n}$ and $b\in \mathbb{Q}$, the right scalar multiplication is defined as $Ab=(a_{ij}b)$. It is easy to know that $\mathbb{Q}^{m\times n}$ is a right vector space over $\mathbb{Q}$ under the addition and the right scalar multiplication \cite{WLZZ18}.  For given $A\in \mathbb{Q}^{m\times n}$, the transpose of $A$ is denoted as $A^\top = (a_{ji})$, the conjugate of $A$ is denoted as ${\bar A} = (\bar a_{ij})$, and the conjugate transpose of $A$ is denoted as $A^*=(\bar a_{ji})=\bar A^\top$. A square matrix $A\in \mathbb{Q}^{m\times m}$ is called nonsingular (invertible) if $AB = BA = I_m$ for some $B\in \mathbb{Q}^{m\times m}$, where $I_m$ stands for the $m\times m$ identity matrix. In that case, we denote $A^{-1} = B$. A square matrix $A\in \mathbb{Q}^{m\times m}$ is called normal if $AA^*=A^*A$, Hermitian if $A^*=A$, and unitary if $A$ is nonsingular and $A^{-1}=A^*$. We have $(AB)^{-1}=B^{-1}A^{-1}$ if $A$ and $B$ are nonsingular, and $(A^*)^{-1}=(A^{-1})^*$ if $A$ is nonsingular. For any $A=(a_{ij})\in \mathbb{Q}^{m\times n}$, the Frobenius norm of $A$ is defined by
$$
\|A\|=\sqrt{\sum_{i=1}^m\sum_{j=1}^n|a_{ij}|^2}.
$$
\begin{Def}\cite{WLZZ18}\label{Def-RIn}
Let ${\bf u}^{(1)},{\bf u}^{(2)},\ldots,{\bf u}^{(n)}\in \mathbb{Q}^m$. We say that $\{{\bf u}^{(1)},{\bf u}^{(2)},\ldots,{\bf u}^{(n)}\}$ is right linearly independent, if for any $k_1,k_2,\ldots,k_n\in \mathbb{Q}$,
$$
{\bf u}^{(1)}k_1+{\bf u}^{(2)}k_2+\ldots+{\bf u}^{(n)}k_n={\bf 0}~~~\Rightarrow~~~k_1=k_2=\ldots=k_n=0.
$$
\end{Def}

From Definition \ref{Def-RIn}, we know that $\{{\bf u}^{(1)},{\bf u}^{(2)},\ldots,{\bf u}^{(n)}\}$ is right linearly independent, if and only if  $A{\bf x}={\bf 0}$ has only a unique zero solution in $\mathbb{Q}^n$, where $A=[{\bf u}^{(1)},{\bf u}^{(2)},\ldots,{\bf u}^{(n)}]$. The rank of the matrix $A\in \mathbb{Q}^{m\times n}$ is defined to be the maximum number of columns of $A$ which are right linearly independent, and denoted by ${\rm rank}(A)$. It is easy to see that ${\rm rank}(A)={\rm rank}(BAC)$ for any nonsingular matrices $B$ and $C$ of suitable size. Thus ${\rm rank}(A)$ is equal to the number of positive singular values of $A$. For given $A\in \mathbb{Q}^{m\times n}$, the solution set of $A{\bf x}={\bf 0}$ form a subspace of $\mathbb{Q}^n$, and it has dimension $r$ if and only if ${\rm rank}(A)=n-r$. Moreover, we know that, for given $A\in \mathbb{Q}^{m\times m}$, $A$ is nonsingular if and only if $A$ is of full rank $m$. For given $A\in \mathbb{Q}^{m\times m}$, if there exist ${\bf x}\in \mathbb{Q}^m\backslash\{\bf 0\}$ and $\lambda\in \mathbb{Q}$, such that $A{\bf x} = {\bf x}\lambda$, then $\lambda$ is called a right eigenvalue of $A$, with ${\bf x}$ as an associated right eigenvector. See \cite{WLZZ18} for more details. From the arguments above, we have 
\begin{Prop}\label{SingularP}
For $A\in \mathbb{Q}^{m\times m}$, $A$ is nonsingular, if and only if $\lambda=0$ is not a right eigenvalue of $A$.
\end{Prop}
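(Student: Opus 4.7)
The plan is to prove the claim by stringing together the equivalences already assembled in the paragraph immediately preceding the proposition. The key observation is that $\lambda=0$ being a right eigenvalue is, by the very definition recalled just above, the statement that there exists a nonzero ${\bf x}\in\mathbb{Q}^m$ with $A{\bf x}={\bf x}\cdot 0={\bf 0}$; in other words, the homogeneous system $A{\bf x}={\bf 0}$ admits a nontrivial solution.

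From here I would simply chain the facts already stated. First, by Definition \ref{Def-RIn} together with the remark following it, the existence of a nonzero solution of $A{\bf x}={\bf 0}$ is equivalent to the columns of $A$ being right linearly dependent. Next, by the definition of rank as the maximal number of right linearly independent columns (and the dimension formula ${\rm rank}(A)=m-r$ for the solution space of dimension $r$), the columns of $A$ fail to be right linearly independent precisely when ${\rm rank}(A)<m$. Finally, by the characterization of nonsingularity already cited in the paragraph above (``$A\in\mathbb{Q}^{m\times m}$ is nonsingular if and only if $A$ is of full rank $m$''), ${\rm rank}(A)=m$ is equivalent to $A$ being nonsingular.

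Reading the chain in reverse gives: $A$ singular $\iff$ ${\rm rank}(A)<m$ $\iff$ the columns of $A$ are right linearly dependent $\iff$ $A{\bf x}={\bf 0}$ has a nonzero solution $\iff$ $\lambda=0$ is a right eigenvalue of $A$. The proposition is the contrapositive of this equivalence.

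I do not anticipate any real obstacle here, since every intermediate step has been recorded in the preceding paragraph. The only mild subtlety worth noting is the quaternionic non-commutativity: the eigenvalue equation is written as $A{\bf x}={\bf x}\lambda$ with the scalar on the right, but since $\lambda=0$ is central, the ordering is immaterial and the reduction $A{\bf x}={\bf 0}$ is unambiguous. Thus the proof should be no more than a couple of lines of equivalences.
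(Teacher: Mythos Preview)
Your proposal is correct and follows exactly the approach the paper intends: the proposition is introduced with ``From the arguments above, we have'', so the paper's own ``proof'' is precisely the chain of equivalences (nonsingular $\iff$ full rank $\iff$ columns right linearly independent $\iff$ $A{\bf x}={\bf 0}$ has only the zero solution $\iff$ $0$ is not a right eigenvalue) that you have spelled out. There is nothing to add.
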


For given ${\bf u}=(u_1,u_2,\ldots,u_m)^\top$ and ${\bf v}=(v_1,v_2,\ldots,v_m)^\top$ in $\mathbb{Q}^m$, denote by $\langle {\bf u},{\bf v}\rangle$ the quaternion-valued inner product, i.e., $\langle {\bf u},{\bf v}\rangle={\bf v}^*{\bf u}=\sum_{i=1}^m\bar v_iu_i$. It is easy to see that $\langle {\bf u}, {\bf v} \alpha+{\bf w} \beta\rangle= \bar\alpha\langle {\bf u}, {\bf v}\rangle +\bar\beta\langle{\bf u},{\bf w}\rangle$ and $\langle {\bf u},{\bf v}\rangle=\overline{\langle {\bf v},{\bf u}\rangle}$ for any ${\bf u},{\bf v}, {\bf w}\in \mathbb{Q}^m$ and $\alpha,\beta\in \mathbb{Q}$.

\begin{Prop} \cite{BB17}\label{QCauchy-Inequality}
(Cauchy-Schwarz inequality on $\mathbb{Q}^m$) For any ${\bf u},{\bf v}\in \mathbb{Q}^m$, it holds that
$$
|\langle{\bf u},{\bf v}\rangle|\leq \|{\bf u}\|\|\bf v\|.
$$
\end{Prop}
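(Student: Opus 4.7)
The plan is to adapt the standard Gram-determinant / optimal-scalar argument to the quaternionic setting, being careful about the side on which scalars act because $\mathbb{Q}^m$ is only a right vector space and the inner product has the conjugate-linearity pattern stated just before the proposition, namely $\langle {\bf u}, {\bf v}\alpha\rangle = \bar{\alpha}\langle {\bf u},{\bf v}\rangle$ and, dually, $\langle {\bf u}\alpha,{\bf v}\rangle = \langle {\bf u},{\bf v}\rangle\alpha$.

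First I would dispose of the trivial case ${\bf v}={\bf 0}$, where both sides vanish. For ${\bf v}\neq {\bf 0}$, I will exploit the nonnegativity $\langle {\bf w},{\bf w}\rangle = \|{\bf w}\|^{2}\geq 0$ applied to ${\bf w}:={\bf u}-{\bf v}\alpha$ for a quaternion $\alpha$ to be chosen. Expanding with the listed sesquilinearity rules gives
\begin{equation*}
0\le \|{\bf u}\|^{2}-\bar{\alpha}\langle {\bf u},{\bf v}\rangle-\overline{\langle {\bf u},{\bf v}\rangle}\,\alpha+|\alpha|^{2}\|{\bf v}\|^{2},
\end{equation*}
where I have used $\langle {\bf v},{\bf u}\rangle=\overline{\langle {\bf u},{\bf v}\rangle}$ and the fact that $\|{\bf v}\|^{2}$ is real, hence central in the quaternion algebra.

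Next I will take the optimal choice $\alpha:=\langle {\bf u},{\bf v}\rangle/\|{\bf v}\|^{2}$; since the denominator is a positive real scalar, $\alpha$ is a well-defined quaternion and $\bar{\alpha}=\overline{\langle {\bf u},{\bf v}\rangle}/\|{\bf v}\|^{2}$. Substituting and using $\bar{q}q=q\bar{q}=|q|^{2}$ (which does hold for individual quaternions even though the algebra is noncommutative), each of the three cross terms collapses to $|\langle {\bf u},{\bf v}\rangle|^{2}/\|{\bf v}\|^{2}$, and the inequality reduces to
\begin{equation*}
0\le \|{\bf u}\|^{2}-\frac{|\langle {\bf u},{\bf v}\rangle|^{2}}{\|{\bf v}\|^{2}},
\end{equation*}
from which the claim follows by multiplying through by $\|{\bf v}\|^{2}$ and taking square roots.

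The only real obstacle is bookkeeping around noncommutativity: I must consistently use right scalar multiplication on vectors (so that $\langle{\bf v}\alpha,{\bf v}\alpha\rangle$ expands to $\bar{\alpha}\|{\bf v}\|^{2}\alpha=|\alpha|^{2}\|{\bf v}\|^{2}$ rather than to some stray conjugate), and I must verify that $\overline{\langle {\bf u},{\bf v}\rangle}\,\langle {\bf u},{\bf v}\rangle$ and $\langle {\bf u},{\bf v}\rangle\,\overline{\langle {\bf u},{\bf v}\rangle}$ both equal $|\langle {\bf u},{\bf v}\rangle|^{2}$ so that the cross terms really cancel into a single real quantity. Once these checks are in place, the argument parallels the classical complex case verbatim.
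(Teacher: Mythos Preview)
The paper does not supply its own proof of this proposition; it is simply cited from \cite{BB17}. Your argument is correct and is exactly the classical optimal-scalar method, carried out with the right bookkeeping for quaternionic noncommutativity. In fact, the paper later employs the identical technique---expanding $\langle {\bf u}-{\bf v}\lambda,{\bf u}-{\bf v}\lambda\rangle\ge 0$ and specializing to $\lambda=\|{\bf v}\|^{-2}\langle{\bf u},{\bf v}\rangle$---in its proof of Proposition~\ref{Ch-SW-In}, the dual quaternion Cauchy--Schwarz inequality, so your approach is fully in line with the paper's own methods.
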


\begin{Def}\cite{Ro14}\label{Def-Orthog}
Let ${\bf u},{\bf v}\in \mathbb{Q}^m\backslash\{0\}$. We say ${\bf u},{\bf v}$  are orthogonal if $\langle {\bf u},{\bf v}\rangle=0$. An $n$-tuple $\{{\bf u}^{(1)},{\bf u}^{(2)},\ldots, {\bf u}^{(n)}\}$, where ${\bf u}^{(1)},{\bf u}^{(2)},\ldots, {\bf u}^{(n)}\in \mathbb{Q}^m\backslash\{0\}$, is said to be orthogonal if $\langle {\bf u}^{(i)},{\bf u}^{(j)}\rangle=0$ for $i\neq j$, and orthonormal if it is orthogonal and $\langle {\bf u}^{(i)},{\bf u}^{(i)}\rangle=1$ for $i=1,2,\ldots,n$.
\end{Def}

It is easy to see that, a square  matrix $A\in \mathbb{Q}^{m\times m}$ is unitary if and only if its column (row) vectors form an orthonormal basis of $\mathbb{Q}^m$.

\begin{Prop} \cite{WLZZ18}
Let ${\bf u}^{(1)}\in \mathbb{Q}^m$ be a unit vector. Then there exist unit vectors ${\bf u}^{(2)},\ldots,{\bf u}^{(m)}\in \mathbb{Q}^m$ such that $\{{\bf u}^{(1)},{\bf u}^{(2)},\ldots,{\bf u}^{(m)}\}$ is an orthogonal set. 
\end{Prop}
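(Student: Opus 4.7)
The plan is to prove this by a quaternion version of Gram--Schmidt orthonormalization, after first extending $\{{\bf u}^{(1)}\}$ to a right linearly independent set of size $m$.

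First I would extend $\{{\bf u}^{(1)}\}$ to a right linearly independent set of $m$ vectors in $\mathbb{Q}^m$. Since $m$ is the maximal number of right linearly independent vectors in $\mathbb{Q}^m$ (recall that an $m\times m$ quaternion matrix has rank at most $m$), and since ${\bf u}^{(1)}\neq {\bf 0}$ means $\{{\bf u}^{(1)}\}$ itself is right linearly independent, a standard greedy argument lets me successively adjoin some standard basis vector ${\bf e}_{i}$ whenever the current list has fewer than $m$ elements; otherwise that list would span a proper subspace containing all ${\bf e}_i$, a contradiction. Call the resulting set $\{{\bf u}^{(1)},{\bf v}^{(2)},\ldots,{\bf v}^{(m)}\}$.

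Next I apply Gram--Schmidt, being careful to place scalar coefficients on the right. Inductively, suppose $\{{\bf u}^{(1)},\ldots,{\bf u}^{(k-1)}\}$ is an orthonormal set whose right-linear span coincides with that of $\{{\bf u}^{(1)},{\bf v}^{(2)},\ldots,{\bf v}^{(k-1)}\}$. Define
\[
\tilde{\bf u}^{(k)} \;=\; {\bf v}^{(k)} - \sum_{i=1}^{k-1} {\bf u}^{(i)}\,\langle {\bf v}^{(k)},{\bf u}^{(i)}\rangle,
\]
and set ${\bf u}^{(k)} = \tilde{\bf u}^{(k)}/\|\tilde{\bf u}^{(k)}\|$. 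Right linear independence of the enlarged set guarantees $\tilde{\bf u}^{(k)}\neq {\bf 0}$, since $\tilde{\bf u}^{(k)}={\bf 0}$ would express ${\bf v}^{(k)}$ as a right linear combination of the previous vectors. For the orthogonality check, I use that for any $\alpha\in\mathbb{Q}$ one has $\langle {\bf u}^{(i)}\alpha,{\bf u}^{(j)}\rangle = {\bf u}^{(j)*}{\bf u}^{(i)}\alpha = \langle {\bf u}^{(i)},{\bf u}^{(j)}\rangle\,\alpha$; combined with the inductive orthonormality $\langle {\bf u}^{(i)},{\bf u}^{(j)}\rangle=\delta_{ij}$, this gives
\[
\langle \tilde{\bf u}^{(k)},{\bf u}^{(j)}\rangle \;=\; \langle {\bf v}^{(k)},{\bf u}^{(j)}\rangle - \langle {\bf v}^{(k)},{\bf u}^{(j)}\rangle \;=\; 0
\]
for every $j<k$, as desired.

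The main subtlety, and the only place quaternion noncommutativity could bite, is the ordering of scalars in the projection: the coefficient $\langle {\bf v}^{(k)},{\bf u}^{(i)}\rangle$ must sit on the right of ${\bf u}^{(i)}$, matching the convention that $\mathbb{Q}^m$ is a right $\mathbb{Q}$-vector space and that $\langle \cdot,\cdot\rangle$ is conjugate-linear in the second slot (as recorded in the excerpt). Once this placement is made, the calculation above is identical to the complex case and the induction carries through to $k=m$, producing the required orthonormal set $\{{\bf u}^{(1)},{\bf u}^{(2)},\ldots,{\bf u}^{(m)}\}$.
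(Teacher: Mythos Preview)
Your Gram--Schmidt argument is correct, including the care you take to keep the projection coefficients on the right so that the computation $\langle {\bf u}^{(i)}\alpha,{\bf u}^{(j)}\rangle=\langle {\bf u}^{(i)},{\bf u}^{(j)}\rangle\,\alpha$ goes through. Note, however, that the paper does not supply its own proof of this proposition: it is stated with a citation to \cite{WLZZ18} and left at that, so there is nothing to compare against beyond observing that your argument is the standard one and almost certainly what the cited reference does as well.
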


Every $A\in \mathbb{Q}^{m\times m}$ can be written as $A=\hat A_1+\hat A_2\jj$, where $\hat A_1$ and $\hat A_2$ are $m\times m$ complex matrices. Using this expression of quaternion matrices, we can obtain many properties of the quaternion matrices, such as every quaternion matrix has a Jordan form under similar transformation \cite{ZW01}.










\subsection{Dual numbers}\label{Du-number}
Denote by $\mathbb D$ the set of the dual numbers. A dual number $q\in \mathbb{D}$ has the form $q = q_{\rm st} + q_{\rm I}\epsilon$, where $q_{\rm st}$ and $q_{\rm I}$ are real numbers,  and $\epsilon$ is the infinitesimal unit, satisfying $\epsilon^2 = 0$ but $\epsilon\neq 0$.   We call $q_{\rm st}$ the real part or the standard part of $q$, and $q_{\rm I}$ the dual part or the infinitesimal part of $q$.  The infinitesimal unit $\epsilon$ is commutative in multiplication with 
quaternion numbers. 
 If $q_{\rm st} \not = 0$, we say that $q$ is appreciable, otherwise, we say that $q$ is infinitesimal. The dual numbers form a commutative algebra of dimension two over the reals.

In \cite{QLY21}, a total order was introduced for dual numbers. Suppose $p = p_{\rm st} + p_{\rm I}\epsilon, q = q_{\rm st} + q_{\rm I}\epsilon \in \mathbb D$.  We have $q < p$ if $q_{\rm st} < p_{\rm st}$, or $q_{\rm st} = p_{\rm st}$ and $q_{\rm I} < p_{\rm I}$.  We have $q = p$ if $q_{\rm st} = p_{\rm st}$ and $q_{\rm I} = p_{\rm I}$.   Thus, if $q > 0$, we say that $q$ is a positive dual number; and if $q \ge 0$, we say that $q$ is a nonnegative dual number.  Denote the set of nonnegative dual numbers by $\mathbb D_+$, and the set of positive dual numbers by $\mathbb D_{++}$. For given $p = p_{\rm st} + p_{\rm I}\epsilon, q = q_{\rm st} + q_{\rm I}\epsilon \in \mathbb D$, and an integer $k$,
we have
\begin{equation} \label{e1}
p + q =p_{\rm st}+q_{\rm st} +(p_{\rm I}+q_{\rm I})\epsilon,~~~~pq = p_{\rm st}q_{\rm st} +(p_{\rm st}q_{\rm I}+p_{\rm I} q_{\rm st})\epsilon\end{equation}
and
\begin{equation} \label{e2}
q^k = q_{\rm st}^k + kq_{\rm st}^{k-1}q_{\rm I} \epsilon.
\end{equation}
The absolute value \cite{QLY21} of $q \in \mathbb D$ is defined by
\begin{equation} \label{e5}
|q| = \left\{ \begin{array}{ll}|q_{\rm st}| + {\rm sgn}(q_{\rm st})q_{\rm I}\epsilon, & {\rm if~}  q_{\rm st} \not = 0, \\
|q_{\rm I}|\epsilon, & {\rm otherwise},  \end{array}  \right.
\end{equation}
where for any $u \in \mathbb R$,
$${\rm sgn}(u) =  \left\{ \begin{aligned} 1, & \ {\rm if}\  u > 0, \\ 0, &   \ {\rm if}\  u = 0, \\
-1, &   \ {\rm if}\  u < 0.  \end{aligned}  \right. $$
For given $q=q_{\rm st}+q_{\rm I}\epsilon$, if $q$ is appreciable, then $q$ is nonsingular and $q^{-1} = q_{\rm st}^{-1} - q_{\rm st}^{-1}q_{\rm I} q_{\rm st}^{-1}\epsilon$. If $q$ is infinitesimal, then $q$ is not nonsingular. If $q$ is nonnegative and appreciable, then the square root of $q$ is still a nonnegative dual number. If $q$ is positive and appreciable, we have
\begin{equation} \label{e4}
\sqrt{q} = \sqrt{q_{\rm st}} + {q_{\rm I} \over 2\sqrt{q_{\rm st}}}\epsilon.
\end{equation}
When $q=0$, we have $\sqrt{q} = 0$.

\begin{Prop}\label{P6.5}
Let $p,q\in \mathbb{D}$. Then, we have the following conclusions.

(a) If $p, q \in \mathbb D_{+}$, then $pq \in \mathbb D_{+}$.

(b) If $p, q \in \mathbb D_{++}$ and at least one of them is appreciable, then $pq \in \mathbb D_{++}$.

(c) $|p|=p$ if $p\geq 0$, $|p|>p$ otherwise.

(d) $|p|=\sqrt{p^2}$ if $p$ is appreciable.

(e) If $p, q \in \mathbb D_{++}$ and are both appreciable, then $\sqrt{pq}=\sqrt{p}\sqrt{q}$.

(f) If $q\in \mathbb D_{++}$ and is appreciable, then $p-q \in \mathbb D_{+}$ implies $\sqrt{p}-\sqrt{q} \in \mathbb D_{+}$.

\begin{proof}
The proofs of (a)-(d) can be found in \cite{QLY21}. Since $p, q \in \mathbb D_{++}$ are both appreciable, by (\ref{e4}), we can verify (e) holds. Now we prove (f). Let $p=p_{\rm st}+p_{\rm I}\epsilon$ and $q=q_{\rm st}+q_{\rm I}\epsilon$. If $0<q_{\rm st}<p_{\rm st}$, the conclusion is clear. If $0<q_{\rm st}=p_{\rm st}$, then $q_{\rm I}\leq p_{\rm I}$ since $p-q \in \mathbb D_{+}$, which implies $\sqrt{p}-\sqrt{q} \in \mathbb D_{+}$ by (\ref{e4}).
\end{proof}

\end{Prop}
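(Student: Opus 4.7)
The plan is to dispatch each part by direct computation: I would write $p=p_{\rm st}+p_{\rm I}\epsilon$ and $q=q_{\rm st}+q_{\rm I}\epsilon$ and invoke the explicit formulas (\ref{e1}), (\ref{e2}), (\ref{e4}), and (\ref{e5}), combined with a case split on whether the standard parts vanish. The key bookkeeping fact is that the order on $\mathbb D$ is lexicographic on $(\text{standard part},\text{infinitesimal part})$, so comparing two dual numbers just means comparing two pairs of reals.

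For (a), the product formula gives $pq = p_{\rm st}q_{\rm st} + (p_{\rm st}q_{\rm I} + p_{\rm I}q_{\rm st})\epsilon$, so when $p_{\rm st}q_{\rm st}>0$ we are immediately done; the degenerate case $p_{\rm st}q_{\rm st}=0$ forces at least one factor to be infinitesimal, and $p,q\ge 0$ then forces the corresponding infinitesimal part to be nonnegative, so the infinitesimal part of $pq$ is $\ge 0$. Part (b) uses the same identity with strict inequalities, and the appreciable hypothesis is exactly what keeps $p_{\rm st}q_{\rm st}$ from being zero together with a zero infinitesimal part. For (c) I would split on the sign of $p_{\rm st}$: when $p_{\rm st}>0$ the definition (\ref{e5}) gives $|p|=p_{\rm st}+p_{\rm I}\epsilon=p$, when $p_{\rm st}<0$ it gives $|p|=-p>p$, and when $p_{\rm st}=0$ a further split on $p_{\rm I}$ yields $|p|=|p_{\rm I}|\epsilon$, which equals $p$ exactly when $p_{\rm I}\ge 0$. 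For (d) I would compute $p^2 = p_{\rm st}^2+2p_{\rm st}p_{\rm I}\epsilon$, which is positive and appreciable when $p_{\rm st}\ne 0$, and then (\ref{e4}) yields $\sqrt{p^2}=|p_{\rm st}|+\mathrm{sgn}(p_{\rm st})p_{\rm I}\epsilon=|p|$.

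The substantive work is in (e) and (f). For (e), since $p,q$ are both positive and appreciable, (\ref{e4}) gives closed forms for $\sqrt p$, $\sqrt q$, and (using part (b)) also for $\sqrt{pq}$; I would expand the product $\sqrt p\sqrt q$, combine the two infinitesimal terms over the common denominator $2\sqrt{p_{\rm st}q_{\rm st}}$, and check it matches the formula for $\sqrt{pq}$ term by term. For (f), the hypotheses force $p_{\rm st}\ge q_{\rm st}>0$, so $p$ is also positive and appreciable and (\ref{e4}) applies to both; if $p_{\rm st}>q_{\rm st}$ the standard part of $\sqrt p-\sqrt q$ is strictly positive and we are done by lexicographic comparison, while if $p_{\rm st}=q_{\rm st}$ the relation $p-q\ge 0$ collapses to $p_{\rm I}\ge q_{\rm I}$, and (\ref{e4}) gives $\sqrt p-\sqrt q=\tfrac{p_{\rm I}-q_{\rm I}}{2\sqrt{p_{\rm st}}}\epsilon\in\mathbb D_+$. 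The main obstacle is essentially just the algebraic bookkeeping in (e)---one has to combine the $\epsilon$-terms carefully so that they really do coincide with the infinitesimal part of $\sqrt{pq}$---and the role of the appreciable hypothesis throughout is precisely to keep the denominators $\sqrt{p_{\rm st}}$ and $\sqrt{q_{\rm st}}$ away from zero so that (\ref{e4}) is legitimately applicable.
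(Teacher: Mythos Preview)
Your proposal is correct and follows essentially the same approach as the paper: the paper defers (a)--(d) to the reference \cite{QLY21}, says (e) follows by direct verification via (\ref{e4}), and handles (f) by the identical case split on whether $p_{\rm st}>q_{\rm st}$ or $p_{\rm st}=q_{\rm st}$. You simply supply more of the routine computational detail that the paper omits.
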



\subsection{Dual quaternions and dual quaternion matrices}

A dual quaternion $q$ has the form $q = q_{\rm st} + q_{\rm I}\epsilon$,
where $q_{\rm st}, q_{\rm I} \in \mathbb {Q}$ are the standard part and the infinitesimal part of $q$, respectively. Denote by $\mathbb {DQ}$ the set of dual quaternions. The conjugate of $q$ is $\bar q = \bar q_{\rm st} + \bar q_{\rm I}\epsilon$. See \cite{BK20, CKJC16, Ke12}. Similar to dual numbers, if $q_{\rm st} \not = 0$, then we say that $q$ is appreciable, otherwise, we say that $q$ is infinitesimal. We can derive that $q$ is invertible if and only if  $q$ is appreciable. In this case, we have $q^{-1} = q_{\rm st}^{-1} - q_{\rm st}^{-1}q_{\rm I} q_{\rm st}^{-1} \epsilon$. The magnitude of $q\in \mathbb{DQ}$ is defined as
\begin{equation} \label{e7}
\displaystyle|q| := \left\{ \begin{array}{ll} |q_{\rm st}| +\displaystyle {(q_{\rm st}\bar q_{\rm I}+q_{\rm I} \bar q_{\rm st}) \over 2|q_{\rm st}|}\epsilon, & \ {\rm if}\  q_{\rm st} \not = 0, \\
|q_{\rm I}|\epsilon, &  \ {\rm otherwise},
\end{array} \right.
\end{equation}
which is a dual number.    Note that this definition reduces to the definition of the absolute function if $q \in \mathbb D$, and the definition of the magnitude of a quaternion if $q \in \mathbb Q$.

Denote by $\mathbb{DQ}^{m\times n}$ the set of $m\times n$ dual quaternion matrices. Then $A\in \mathbb{DQ}^{m\times n}$ can
be written as $A = A_{\rm st} + A_{\rm I}\epsilon$, where $A_{\rm st}, A_{\rm I}\in \mathbb{Q}^{m\times n}$ are the standard part and the infinitesimal part of $A$, respectively.  If $A_{\rm st} \not = O$, we say that $A$ is appreciable, otherwise, we say that $A$ is infinitesimal. Note that for a dual quaternion matrix $A = A_{\rm st} + A_{\rm I}\epsilon$, if $A_{\rm st}=O$, the analysis for $A$ will be analogous to that for the quaternion matrix $A_{\rm I}$. Thus, unless otherwise stated, we will assume that the dual quaternion matrix $A$ is appreciable throughout the paper. It is obvious that when $n=1$, dual quaternion matrix $A$ reduces to the dual quaternion column vector with $m$ components. In this case, $\mathbb{DQ}^{m\times 1}$ is abbreviated as $\mathbb{DQ}^m$. For given ${\bf u}=(u_1,u_2,\ldots,u_m)^\top$ and ${\bf v}=(v_1,v_2,\ldots,v_m)^\top$ in $\mathbb{DQ}^m$, denote by $\langle {\bf u},{\bf v}\rangle$ the dual quaternion-valued inner product, i.e., $\langle {\bf u},{\bf v}\rangle=\sum_{i=1}^m\bar v_iu_i$. Accordingly, the orthogonality and orthonormality of vectors in $\mathbb{DQ}^m$ can be defined similarly to vectors in $\mathbb{Q}^m$. Denote by $[{\bf 0}]$ the set of all infinitesimal vectors in $\mathbb{DQ}^m$.  

For given $A\in \mathbb{DQ}^{m\times n}$, the transpose of $A$ is denoted as $A^\top = (a_{ji})$, the conjugate of $A$ is denoted as ${\bar A} = (\bar a_{ij})$, and the conjugate transpose of $A$ is denoted as $A^*=(\bar a_{ji})=\bar A^\top$. It is obvious that $A^\top = A_{\rm st}^\top + A_{\rm I}^\top\epsilon$, $\bar A = \bar A_{\rm st}+\bar A_{\rm I}\epsilon$ and $A^* = A_{\rm st}^* + A_{\rm I}^*\epsilon$. A square matrix $A\in \mathbb{DQ}^{m\times m}$ is called nonsingular (invertible) if $AB = BA = I_m$ for some $B\in \mathbb{DQ}^{m\times m}$. In that case, we denote $A^{-1} = B$. A square matrix $A\in \mathbb{DQ}^{m\times m}$ is called normal if $AA^*=A^*A$, Hermitian if $A^*=A$, and unitary if $A$ is nonsingular and $A^{-1}=A^*$. We have $(AB)^{-1}=B^{-1}A^{-1}$ if $A$ and $B$ are nonsingular, and $(A^*)^{-1}=(A^{-1})^*$ if $A$ is nonsingular. It is easy to see that a square matrix $U=[{\bf u}_1,{\bf u}_2,\ldots,{\bf u}_m]\in \mathbb{DQ}^{m\times m}$ is unitary if and only if $\{{\bf u}_1,{\bf u}_2,\ldots,{\bf u}_m\}$ form an orthonormal basis of $\mathbb{DQ}^m$, i.e., it is orthonormal and any vector ${\bf x}$ in $\mathbb {DQ}^m$ can be written as ${\bf x}=\sum_{i=1}^m{\bf u}_i\alpha_i$ for some $\alpha_1,\alpha_2,\ldots,\alpha_m\in \mathbb {DQ}$.


Suppose that $A\in \mathbb{DQ}^{m\times m}$. If there are $\lambda\in \mathbb{DQ}$, and ${\bf x}\in \mathbb{DQ}^m$, where ${\bf x}$ is appreciable,  such that
\begin{equation}\label{RightEig}
A{\bf x} = {\bf x}\lambda,
\end{equation}
then we say that $\lambda$ is a right eigenvalue of $A$, with ${\bf x}$ as an associated right eigenvector. If $A\in \mathbb{DQ}^{m\times m}$ is Hermitian, then $\lambda$ in (\ref{RightEig}) is a dual number \cite{QL21}, hence (\ref{RightEig}) implies $A{\bf x} = \lambda{\bf x}$, i.e., $\lambda$ is also a left eigenvalue of $A$, with ${\bf x}$ as an associated left eigenvector. In this case, $\lambda$ is simply called an eigenvalue of $A$, and ${\bf x}$ an associated eigenvector. In particular, it was shown in \cite{QL21} that an $m\times m$ dual quaternion Hermitian matrix has exactly $m$ dual number eigenvalues. 



For any $A=(a_{ij})\in \mathbb{DQ}^{m\times m}$, the norm of $A$, which is a dual number, is defined by
\begin{equation}\label{Norm-DQV}
\|A\|=\left\{
\begin{array}{ll}
\displaystyle\sqrt{\sum_{i=1}^m\sum_{j=1}^n|a_{ij}|^2},&{\rm if~}A~{\rm is ~appreciable},\\
\displaystyle\sqrt{\sum_{i=1}^m\sum_{j=1}^n|(a_{ij})_{\rm I}|^2} \epsilon,&{\rm otherwise}.
\end{array}
\right.
\end{equation}
\begin{Prop}\label{ReDQqin}
For any $q\in \mathbb{DQ}$, it holds that $|{\rm Re}(q)|\leq |q|$.
\end{Prop}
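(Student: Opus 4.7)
The plan is to unfold the two definitions (\ref{e5}) and (\ref{e7}) and then compare the resulting dual numbers via the total order on $\mathbb{D}$ described in Section \ref{Du-number}. Write $q = q_{\rm st}+q_{\rm I}\epsilon$ with $q_{\rm st}=a_0+a_1\ii+a_2\jj+a_3\kk$ and $q_{\rm I}=b_0+b_1\ii+b_2\jj+b_3\kk$, so that $\mathrm{Re}(q)=a_0+b_0\epsilon$ is a dual number and $|q|$ is computed from (\ref{e7}).

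First I would dispose of the infinitesimal case $q_{\rm st}=0$. Then $|\mathrm{Re}(q)|=|b_0|\epsilon$ and $|q|=|q_{\rm I}|\epsilon=\sqrt{b_0^2+b_1^2+b_2^2+b_3^2}\,\epsilon$, and the inequality follows directly from the classical scalar estimate $|b_0|\le\sqrt{b_0^2+b_1^2+b_2^2+b_3^2}$.

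Next assume $q$ is appreciable, and split on whether $a_0=0$. If $a_0=0$, then $|\mathrm{Re}(q)|=|b_0|\epsilon$ is infinitesimal while $|q|$ has positive standard part $|q_{\rm st}|>0$, so the total order yields $|\mathrm{Re}(q)|<|q|$. If $a_0\ne 0$, then by (\ref{e5}) we have $|\mathrm{Re}(q)|=|a_0|+\mathrm{sgn}(a_0)b_0\epsilon$. The key algebraic identity is $q_{\rm st}\bar q_{\rm I}+q_{\rm I}\bar q_{\rm st}=2\,\mathrm{Re}(q_{\rm st}\bar q_{\rm I})=2(a_0b_0+a_1b_1+a_2b_2+a_3b_3)$, so by (\ref{e7}) the dual part of $|q|$ equals $(a_0b_0+a_1b_1+a_2b_2+a_3b_3)/|q_{\rm st}|$. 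Since $|a_0|\le|q_{\rm st}|=\sqrt{a_0^2+a_1^2+a_2^2+a_3^2}$, either the standard parts strictly satisfy $|a_0|<|q_{\rm st}|$, in which case the total order settles the inequality regardless of dual parts, or else $a_1=a_2=a_3=0$, forcing $q_{\rm st}=a_0$ and reducing the dual part of $|q|$ to exactly $\mathrm{sgn}(a_0)b_0$, making $|\mathrm{Re}(q)|=|q|$.

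The proof is essentially a case analysis on the definitions, and the only delicate point is the equality subcase in $|a_0|\le|q_{\rm st}|$; this is the step where one must check that the dual parts also cooperate, which they do precisely because the quaternion imaginary part of $q_{\rm st}$ must vanish. No nontrivial obstacle is expected beyond this bookkeeping.
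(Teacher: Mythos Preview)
Your proof is correct and follows essentially the same case analysis as the paper: both split into the cases $q_{\rm st}=0$, $q_{\rm st}\ne 0$ with $a_0=0$, and $q_{\rm st}\ne 0$ with $a_0\ne 0$, and both handle the delicate equality subcase $|a_0|=|q_{\rm st}|$ by observing that it forces the imaginary part of $q_{\rm st}$ to vanish, whence the dual parts coincide. Your version is a bit more explicit with coordinates (in particular the identity $q_{\rm st}\bar q_{\rm I}+q_{\rm I}\bar q_{\rm st}=2(a_0b_0+a_1b_1+a_2b_2+a_3b_3)$), but the structure is identical.
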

\begin{proof}
Without loss of generality, we assume $q\neq 0$. Write $q$ as $q=q_{\rm st}+q_{\rm I}\epsilon$ with $q_{\rm st}=(q_{\rm st})_0+{\rm Im}(q_{\rm st})$ and $q_{\rm I}=(q_{\rm I})_0+{\rm Im}(q_{\rm I})$. Then ${\rm Re}(q)=(q_{\rm st})_0+(q_{\rm I})_0\epsilon$. There are three cases: (a) $q_{\rm st}=0$, (b) $(q_{\rm st})_0\neq0$, and (c) $(q_{\rm st})_0=0$ but $q_{\rm st}\neq0$. In case (a), we have $(q_{\rm st})_0=0$. Consequently, since $|(q_{\rm I})_0|\leq |q_{\rm I}|$, it holds that $|{\rm Re}(q)|=|(q_{\rm I})_0|\varepsilon\leq |q_{\rm I}|\epsilon=|q|$, which means that the conclusion  is true. In case (b), we have $q_{\rm st}\neq 0$. If ${\rm Im}(q_{\rm st})\neq0$, then $|(q_{\rm st})_0|<|q_{\rm st}|$, which implies $|{\rm Re}(q)|<|q|$ by (\ref{e5}) and (\ref{e7}). If ${\rm Im}(q_{\rm st})=0$, then $|q_{\rm st}|=|(q_{\rm st})_0|$, hence, by (\ref{e7}), we know
$$
|q|=|(q_{\rm st})_0|+\frac{(q_{\rm st})_0(q_{\rm I})_0}{|(q_{\rm st})_0|}\epsilon=|(q_{\rm st})_0|+{\rm sgn}((q_{\rm st})_0)(q_{\rm I})_0\epsilon=|{\rm Re}(q)|.
$$
Therefore, we obtain the desired conclusion. In case (c), since $q_{\rm st}\neq0$ and $(q_{\rm st})_0=0$, we know $|q_{\rm st}|>0$ and $|{\rm Re}(q)|=|(q_{\rm I})_0|\epsilon$. It is obvious that $|{\rm Re}(q)|=|(q_{\rm I})_0|\epsilon<|q|$, which means that the conclusion is true.
\end{proof}

\begin{Prop} \cite{QLY21}\label{p6.3}
For any $\vx =\vx_{\rm st} + \vx_{\rm I}\epsilon\in{\mathbb {DQ}}^m$ with $\vx_{\rm st}\neq \0$, it holds that
\begin{equation}\label{e12-1}
\|\vx\|=\|\vx_{\rm st}\|+\displaystyle\frac{\langle\vx_{\rm st},\vx_{\rm I}\rangle+\langle\vx_{\rm I},\vx_{\rm st}\rangle}{2\|\vx_{\rm st}\|}\epsilon.
\end{equation}
\end{Prop}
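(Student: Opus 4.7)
The plan is to compute $\|\vx\|^2$ entry by entry, sum, and then apply the dual-number square root formula (\ref{e4}). Since $\vx_{\rm st} \neq \0$, the matrix $\vx$ is appreciable, so by (\ref{Norm-DQV}) we have $\|\vx\| = \sqrt{\sum_{i=1}^m |x_i|^2}$, where each $|x_i|$ is the dual-number magnitude defined in (\ref{e7}).

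First I would handle the entrywise computation. For each $i$, write $x_i = a_i + b_i\epsilon$ with $a_i = (\vx_{\rm st})_i$ and $b_i = (\vx_{\rm I})_i$. Squaring the expression (\ref{e7}) and using that $a_i\bar{b}_i + b_i\bar{a}_i = 2\,{\rm Re}(a_i\bar{b}_i) \in \mathbb R$, a direct calculation gives
\begin{equation*}
|x_i|^2 = |a_i|^2 + \bigl(a_i\bar b_i + b_i\bar a_i\bigr)\epsilon
\end{equation*}
when $a_i \neq 0$, and $|x_i|^2 = 0$ (with vanishing standard part) contributes only to the $\epsilon$-part when $a_i=0$. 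In either case one obtains the clean identity $|x_i|^2 = \bar x_i x_i$ as dual numbers, which is the form that meshes with the inner product.

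Next I would sum these identities. Since $\vx_{\rm st}\neq\0$ means some $a_i\neq 0$, summing yields
\begin{equation*}
\sum_{i=1}^m |x_i|^2 = \sum_{i=1}^m|a_i|^2 + \Bigl(\sum_{i=1}^m \bar a_i b_i + \sum_{i=1}^m\bar b_i a_i\Bigr)\epsilon = \|\vx_{\rm st}\|^2 + \bigl(\langle\vx_{\rm I},\vx_{\rm st}\rangle + \langle\vx_{\rm st},\vx_{\rm I}\rangle\bigr)\epsilon,
\end{equation*}
where the identification of the sums with the inner products follows from the definition $\langle\vu,\vv\rangle = \sum_i \bar v_i u_i$. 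Observe that $\|\vx_{\rm st}\|^2$ is a positive real number and the $\epsilon$-coefficient is real (being a sum of a quaternion and its conjugate), so the right-hand side is a positive appreciable dual number.

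Finally, I would apply formula (\ref{e4}) for the square root of a positive appreciable dual number to $\|\vx\|^2$, which gives
\begin{equation*}
\|\vx\| = \sqrt{\|\vx_{\rm st}\|^2} + \frac{\langle\vx_{\rm st},\vx_{\rm I}\rangle + \langle\vx_{\rm I},\vx_{\rm st}\rangle}{2\sqrt{\|\vx_{\rm st}\|^2}}\epsilon = \|\vx_{\rm st}\| + \frac{\langle\vx_{\rm st},\vx_{\rm I}\rangle+\langle\vx_{\rm I},\vx_{\rm st}\rangle}{2\|\vx_{\rm st}\|}\epsilon,
\end{equation*}
which is the claimed identity. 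The only mildly delicate point is verifying $|x_i|^2 = \bar x_i x_i$ uniformly across the cases $(x_i)_{\rm st}=0$ and $(x_i)_{\rm st}\neq 0$ in (\ref{e7}); everything after that is a straightforward application of (\ref{e1})--(\ref{e4}) together with the definition of the inner product on $\mathbb{DQ}^m$.
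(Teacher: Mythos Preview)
Your argument is correct. The paper does not supply its own proof of this proposition: it is quoted from \cite{QLY21} and stated without proof, so there is nothing to compare against beyond noting that your direct computation---evaluate $|x_i|^2=\bar x_i x_i$ entrywise, sum to $\|\vx_{\rm st}\|^2+\bigl(\langle\vx_{\rm st},\vx_{\rm I}\rangle+\langle\vx_{\rm I},\vx_{\rm st}\rangle\bigr)\epsilon$, then apply the dual square-root formula (\ref{e4})---is exactly the natural route and matches what one would expect the original source to do. One very small wording slip: when $a_i=0$ you say $|x_i|^2$ ``contributes only to the $\epsilon$-part'', but in fact $|x_i|^2=|b_i|^2\epsilon^2=0$ contributes nothing at all; your subsequent identity $|x_i|^2=\bar x_i x_i$ is nonetheless correct in both cases, so the argument stands.
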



\section{Right linear independency of dual quaternion vectors and  nonsingularity of dual quaternion matrices}\label{RighIN-Rank}
We first introduce the following three concepts of right linear independence of dual quaternion vectors, all of which are generalizations of the right linear independence of quaternion vectors.
\begin{Def}\label{Def-DQVecIn}
Let ${\bf u}^{(1)},{\bf u}^{(2)},\ldots,{\bf u}^{(n)}\in \mathbb{DQ}^m$. We say that $\{{\bf u}^{(1)},{\bf u}^{(2)},\ldots,{\bf u}^{(n)}\}$ is right linearly independent, if for any $\alpha_1,\alpha_2,\ldots,\alpha_n\in \mathbb{DQ}$,
$$
{\bf u}^{(1)}\alpha_1+{\bf u}^{(2)}\alpha_2+\ldots+{\bf u}^{(n)}\alpha_n={\bf 0}~~~\Rightarrow~~~\alpha_1=\alpha_2=\ldots=\alpha_n=0.
$$
\end{Def}

\begin{Def}\label{Def-DQVecAppIn}
Let ${\bf u}^{(1)},{\bf u}^{(2)},\ldots,{\bf u}^{(n)}\in \mathbb{DQ}^m$. We say that $\{{\bf u}^{(1)},{\bf u}^{(2)},\ldots,{\bf u}^{(n)}\}$ is right appreciably linearly independent, if for any $\alpha_1,\alpha_2,\ldots,\alpha_n\in \mathbb{DQ}$, $${\bf u}^{(1)}\alpha_1+{\bf u}^{(2)}\alpha_2+\ldots+{\bf u}^{(n)}\alpha_n {~is~ infinitesimal}~~\Rightarrow~~\alpha_1,\alpha_2,\ldots,\alpha_n ~{ are~ all~  infinitesimal}.$$
\end{Def}

\begin{Def}\label{Def-DQVecWeakIn}
Let ${\bf u}^{(1)},{\bf u}^{(2)},\ldots,{\bf u}^{(n)}\in \mathbb{DQ}^m$. We say that $\{{\bf u}^{(1)},{\bf u}^{(2)},\ldots,{\bf u}^{(n)}\}$ is weakly right linearly independent, if for any $\alpha_1,\alpha_2,\ldots,\alpha_n\in \mathbb{DQ}$,
$$
{\bf u}^{(1)}\alpha_1+{\bf u}^{(2)}\alpha_2+\ldots+{\bf u}^{(n)}\alpha_n={\bf 0}~~~\Rightarrow~~~\alpha_1,\alpha_2,\ldots,\alpha_n~{  are ~all ~infinitesimal}.
$$
\end{Def}

From Definition \ref{Def-DQVecIn}, it is easy to see that the right linear independence of $\{{\bf u}^{(1)},{\bf u}^{(2)},\ldots,{\bf u}^{(n)}\}$ is essentially that  $A{\bf x}={\bf 0}$ has only a unique zero solution in $\mathbb{DQ}^n$,  where $A=[{\bf u}^{(1)},{\bf u}^{(2)},\ldots,{\bf u}^{(n)}]\in \mathbb{DQ}^{m\times n}$. From Definition \ref{Def-DQVecWeakIn}, we know that the weak right linear independence of $\{{\bf u}^{(1)},{\bf u}^{(2)},\ldots,{\bf u}^{(n)}\}$ is essentially that  $A{\bf x}={\bf 0}$ has only infinitesimal solutions in $\mathbb{DQ}^n$.  Moreover, from Definitions \ref{Def-DQVecIn} and \ref{Def-DQVecWeakIn}, we know that, if $\{{\bf u}^{(1)},{\bf u}^{(2)},\ldots,{\bf u}^{(n)}\}$
is right linearly independent, then it must be weakly right linearly independent. The following example shows that, a dual quaternion vector set, which is weakly right linearly independent, is not necessarily right linearly independent.

\begin{example} Let ${\bf u}^{(1)}=(1,0)^\top \epsilon, {\bf u}^{(2)}=(0,1)^\top \epsilon\in \mathbb{DQ}^2$. It is easy to see that $\{{\bf u}^{(1)}, {\bf u}^{(2)}\}$ is weakly right linearly independent. However, it is obvious that $\{{\bf u}^{(1)}, {\bf u}^{(2)}\}$ is not right linearly independent, since ${\bf u}^{(1)}\alpha_1+{\bf u}^{(2)}\alpha_2={\bf 0}$ for $\alpha_1=\alpha_2=\epsilon\neq0$.
\end{example}

\begin{Prop}\label{PropAppIN1}
Let ${\bf u}^{(1)},{\bf u}^{(2)},\ldots,{\bf u}^{(n)}\in \mathbb{DQ}^m$ with ${\bf u}^{(i)}=({\bf u}^{(i)})_{\rm st}+({\bf u}^{(i)})_{\rm I}\epsilon$ for $i=1,2,\ldots,n$. Then $\{{\bf u}^{(1)},{\bf u}^{(2)},\ldots,{\bf u}^{(n)}\}$ is right appreciably linearly independent, if and only if $$\left\{({\bf u}^{(1)})_{\rm st},({\bf u}^{(2)})_{\rm st},\ldots,({\bf u}^{(n)})_{\rm st}\right\}$$ is right linearly independent in the sense of Definition \ref{Def-RIn}.
\end{Prop}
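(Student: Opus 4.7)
The plan is to reduce the appreciable linear independence condition to a statement purely about the standard parts, using the decomposition $\alpha_i = (\alpha_i)_{\rm st} + (\alpha_i)_{\rm I}\epsilon$ and the identity
\[
\sum_{i=1}^n {\bf u}^{(i)}\alpha_i \;=\; \sum_{i=1}^n ({\bf u}^{(i)})_{\rm st}(\alpha_i)_{\rm st} \;+\; \epsilon\!\left[\sum_{i=1}^n ({\bf u}^{(i)})_{\rm st}(\alpha_i)_{\rm I} + \sum_{i=1}^n ({\bf u}^{(i)})_{\rm I}(\alpha_i)_{\rm st}\right].
\]
The key observation is that $\sum_i {\bf u}^{(i)}\alpha_i$ is infinitesimal exactly when its standard part $\sum_i ({\bf u}^{(i)})_{\rm st}(\alpha_i)_{\rm st}$ vanishes, and $\alpha_i$ is infinitesimal exactly when $(\alpha_i)_{\rm st}=0$. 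So Definition \ref{Def-DQVecAppIn} is translated into a purely quaternion statement about the coefficients $k_i := (\alpha_i)_{\rm st}$.

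For the forward direction, I would assume $\{{\bf u}^{(1)},\ldots,{\bf u}^{(n)}\}$ is right appreciably linearly independent and take any quaternions $k_1,\ldots,k_n\in\mathbb{Q}$ satisfying $\sum_i ({\bf u}^{(i)})_{\rm st}k_i={\bf 0}$. Setting $\alpha_i=k_i$ (as dual quaternions with zero infinitesimal part), the displayed identity shows $\sum_i {\bf u}^{(i)}\alpha_i$ has standard part zero, hence is infinitesimal. The hypothesis then forces each $\alpha_i$ to be infinitesimal, i.e.\ $k_i=(\alpha_i)_{\rm st}=0$, giving the right linear independence of $\{({\bf u}^{(i)})_{\rm st}\}$ in the sense of Definition \ref{Def-RIn}.

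For the converse, I would assume $\{({\bf u}^{(1)})_{\rm st},\ldots,({\bf u}^{(n)})_{\rm st}\}$ is right linearly independent in $\mathbb{Q}^m$ and pick dual quaternions $\alpha_1,\ldots,\alpha_n$ with $\sum_i {\bf u}^{(i)}\alpha_i$ infinitesimal. Reading off the standard part of the identity gives $\sum_i ({\bf u}^{(i)})_{\rm st}(\alpha_i)_{\rm st}={\bf 0}$, and quaternion right linear independence yields $(\alpha_i)_{\rm st}=0$ for each $i$, which is exactly what it means for $\alpha_i$ to be infinitesimal.

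There is no real obstacle here: the proof is just a bookkeeping unpacking of the two definitions via the standard-part projection, together with the trivial observation that as $\alpha_i$ ranges over $\mathbb{DQ}$, the quantity $(\alpha_i)_{\rm st}$ ranges over all of $\mathbb{Q}$. The only thing to be careful about is not to confuse "infinitesimal" (standard part zero) with "zero" — indeed it is precisely this distinction, already illustrated by the example preceding the proposition, that makes appreciable linear independence a strictly weaker notion than right linear independence in Definition \ref{Def-DQVecIn}.
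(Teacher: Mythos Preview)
Your proof is correct and follows exactly the paper's approach: both directions reduce to reading off the standard part of $\sum_i {\bf u}^{(i)}\alpha_i$ and invoking the respective definitions. One small correction to your closing remark: by Proposition~\ref{PropAppIN2} appreciable linear independence \emph{implies} right linear independence, so it is the \emph{stronger} notion, not a weaker one; the example preceding the proposition concerns the relationship between \emph{weak} right linear independence and right linear independence, not appreciable independence.
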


\begin{proof}
Let $\{{\bf u}^{(1)},{\bf u}^{(2)},\ldots,{\bf u}^{(n)}\}$ be right appreciably linearly independent. Suppose $$\sum_{i=1}^n({\bf u}^{(i)})_{\rm st}\alpha_i=0,$$ where $\alpha_i\in \mathbb{Q}$, which implies $$\sum_{i=1}^n{\bf u}^{(i)}\alpha_i=\left(\sum_{i=1}^n({\bf u}^{(i)})_{\rm I}\alpha_i\right)\epsilon,$$
i.e., $\sum_{i=1}^n{\bf u}^{(i)}\alpha_i$ is infinitesimal. By Definition \ref{Def-DQVecAppIn}, we know that $\alpha_1,\alpha_2,\ldots,\alpha_n$ are all infinitesimal, i.e., $\alpha_1=\alpha_2=\ldots=\alpha_n=0$. Hence, $\left\{({\bf u}^{(1)})_{\rm st},({\bf u}^{(2)})_{\rm st},\ldots,({\bf u}^{(n)})_{\rm st}\right\}$ is right linearly independent in the sense of Definition \ref{Def-RIn}.

Conversely, if $\left\{({\bf u}^{(1)})_{\rm st},({\bf u}^{(2)})_{\rm st},\ldots,({\bf u}^{(n)})_{\rm st}\right\}$ is right linearly independent in the sense of Definition \ref{Def-RIn}, then by Definition \ref{Def-DQVecAppIn}, we know immediately that $\{{\bf u}^{(1)},{\bf u}^{(2)},\ldots,{\bf u}^{(n)}\}$ is right appreciably linearly independent.
\end{proof}

\begin{Prop}\label{PropAppIN2}
Let ${\bf u}^{(1)},{\bf u}^{(2)},\ldots,{\bf u}^{(n)}\in \mathbb{DQ}^m$. If $\{{\bf u}^{(1)},{\bf u}^{(2)},\ldots,{\bf u}^{(n)}\}$ is right appreciably linearly independent, then $\{{\bf u}^{(1)},{\bf u}^{(2)},\ldots,{\bf u}^{(n)}\}$ is right linearly independent.
\end{Prop}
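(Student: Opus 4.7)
The plan is to show that right appreciable linear independence forces the stronger right linear independence by a two-step bootstrap: first use appreciability to knock the scalars $\alpha_i$ down to infinitesimal, then use nilpotency of $\epsilon$ plus Proposition \ref{PropAppIN1} to push them all the way to zero.

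First I would assume $\sum_{i=1}^n {\bf u}^{(i)}\alpha_i = {\bf 0}$ for some $\alpha_1,\ldots,\alpha_n \in \mathbb{DQ}$. Since ${\bf 0}$ is in particular infinitesimal, Definition \ref{Def-DQVecAppIn} immediately yields that every $\alpha_i$ is infinitesimal, so I can write $\alpha_i = \beta_i \epsilon$ with $\beta_i \in \mathbb{Q}$ for each $i$.

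Next I would substitute these expressions back into the identity. Writing ${\bf u}^{(i)} = ({\bf u}^{(i)})_{\rm st} + ({\bf u}^{(i)})_{\rm I}\epsilon$ and using $\epsilon^2 = 0$, the equation collapses to
\begin{equation*}
\sum_{i=1}^n ({\bf u}^{(i)})_{\rm st}\beta_i\,\epsilon = {\bf 0},
\end{equation*}
which is equivalent to $\sum_{i=1}^n ({\bf u}^{(i)})_{\rm st}\beta_i = {\bf 0}$ in $\mathbb{Q}^m$. By Proposition \ref{PropAppIN1}, right appreciable linear independence of $\{{\bf u}^{(1)},\ldots,{\bf u}^{(n)}\}$ is equivalent to right linear independence of $\{({\bf u}^{(1)})_{\rm st},\ldots,({\bf u}^{(n)})_{\rm st}\}$ in $\mathbb{Q}^m$, so Definition \ref{Def-RIn} forces $\beta_1 = \cdots = \beta_n = 0$, and hence $\alpha_1 = \cdots = \alpha_n = 0$.

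There is no real obstacle here; the only subtlety is recognizing that one should not attempt to show appreciability of any $\alpha_i$ directly, but instead factor out an $\epsilon$ and exploit $\epsilon^2 = 0$ so that the infinitesimal parts $({\bf u}^{(i)})_{\rm I}$ drop out and the problem reduces to a genuine quaternion linear independence statement that is already packaged in Proposition \ref{PropAppIN1}.
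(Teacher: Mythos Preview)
Your proof is correct and follows essentially the same two-step reduction as the paper: first show each $(\alpha_i)_{\rm st}=0$, then use the right linear independence of the standard parts $({\bf u}^{(i)})_{\rm st}$ (via Proposition~\ref{PropAppIN1}) to kill the remaining infinitesimal parts. The only cosmetic difference is that you obtain $(\alpha_i)_{\rm st}=0$ directly from Definition~\ref{Def-DQVecAppIn} (since ${\bf 0}$ is infinitesimal), whereas the paper expands $\sum_i {\bf u}^{(i)}\alpha_i$ into standard and dual parts first and reads off $(\alpha_i)_{\rm st}=0$ from the standard-part equation using Proposition~\ref{PropAppIN1}; both routes are equivalent.
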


\begin{proof}
Let ${\bf u}^{(i)}=({\bf u}^{(i)})_{\rm st}+({\bf u}^{(i)})_{\rm I}\epsilon$, where $({\bf u}^{(i)})_{\rm st}, ({\bf u}^{(i)})_{\rm I}\in \mathbb{Q}^m$ for $i=1,2,\ldots,n$. For any $\alpha_1,\alpha_2,\ldots,\alpha_n\in \mathbb{DQ}$, it is obvious that
$$\sum_{i=1}^n{\bf u}^{(i)}\alpha_i=\sum_{i=1}^n({\bf u}^{(i)})_{\rm st}(\alpha_i)_{\rm st}+\left(\sum_{i=1}^n({\bf u}^{(i)})_{\rm st}(\alpha_i)_{\rm I}+\sum_{i=1}^n({\bf u}^{(i)})_{\rm I}(\alpha_i)_{\rm st}\right)\epsilon.$$
Suppose that $\{{\bf u}^{(1)},{\bf u}^{(2)},\ldots,{\bf u}^{(n)}\}$ is right appreciably linearly independent. Now we prove that $\{{\bf u}^{(1)},{\bf u}^{(2)},\ldots,{\bf u}^{(n)}\}$ must be right linearly independent. If $\sum_{i=1}^n{\bf u}^{(i)}\alpha_i={\bf 0}$, then we have
\begin{equation}\label{StImEq-1}
\left\{
\begin{array}{l}
\displaystyle\sum_{i=1}^n({\bf u}^{(i)})_{\rm st}(\alpha_i)_{\rm st}={\bf 0},\\
\displaystyle\sum_{i=1}^n({\bf u}^{(i)})_{\rm st}(\alpha_i)_{\rm I}+\sum_{i=1}^n({\bf u}^{(i)})_{\rm I}(\alpha_i)_{\rm st}={\bf 0}.
\end{array}
\right.
\end{equation}
By Proposition \ref{PropAppIN1}, the first equality in (\ref{StImEq-1}) and Definition \ref{Def-RIn}, we know that $(\alpha_i)_{\rm st}=0$ for $i=1,2,\ldots,n$. Consequently, from the second equality in (\ref{StImEq-1}), we have
$$
\sum_{i=1}^n\left({\bf u}^{(i)}\right)_{\rm st}(\alpha_i)_{\rm I}={\bf 0}.
$$
Since $\left\{({\bf u}^{(1)})_{\rm st},({\bf u}^{(2)})_{\rm st},\ldots,({\bf u}^{(n)})_{\rm st}\right\}$ is right linearly independent in the sense of Definition \ref{Def-RIn}, we know  that $(\alpha_i)_{\rm I}=0$ for $i=1,2,\ldots,n$. Hence, it holds that $\alpha_i=0$ for $i=1,2,\ldots,n$. This means that  $\left\{{\bf u}^{(1)},{\bf u}^{(2)},\ldots,{\bf u}^{(n)}\right\}$ is right linearly independent.
\end{proof}

\begin{Prop}\label{OnlyZeroS}
Let $A=A_{\rm st}+A_{\rm I}\epsilon\in \mathbb{DQ}^{m\times n}$. Then $A{\bf x}={\bf 0}$ has only a unique zero solution in $\mathbb{DQ}^n$, if and only if ${\rm rank}(A_{\rm st})=n$.
\end{Prop}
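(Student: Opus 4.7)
The plan is to split any dual quaternion vector $\vx \in \mathbb{DQ}^n$ as $\vx = \vx_{\rm st} + \vx_{\rm I}\epsilon$ with $\vx_{\rm st}, \vx_{\rm I}\in \mathbb{Q}^n$ and expand $A\vx$ using $\epsilon^2=0$:
\begin{equation*}
A\vx \;=\; A_{\rm st}\vx_{\rm st} \;+\; \bigl(A_{\rm st}\vx_{\rm I} + A_{\rm I}\vx_{\rm st}\bigr)\epsilon.
\end{equation*}
Equating the standard and infinitesimal parts to zero reduces $A\vx = \mathbf{0}$ to the pair of quaternion equations
\begin{equation*}
A_{\rm st}\vx_{\rm st} = \mathbf{0}, \qquad A_{\rm st}\vx_{\rm I} + A_{\rm I}\vx_{\rm st} = \mathbf{0}.
\end{equation*}
This reformulation lets me pull the problem back into $\mathbb{Q}^n$, where the classical rank/nullity facts recalled right after Definition \ref{Def-RIn} apply.

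For the sufficiency direction, I assume ${\rm rank}(A_{\rm st})=n$. Since $A_{\rm st}\in \mathbb{Q}^{m\times n}$ has full column rank, the first equation forces $\vx_{\rm st}=\mathbf{0}$. Substituting into the second equation leaves $A_{\rm st}\vx_{\rm I}=\mathbf{0}$, and the same full-rank property yields $\vx_{\rm I}=\mathbf{0}$. Hence $\vx = \mathbf{0}$, as required.

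For the necessity direction, I argue by contrapositive. Assume ${\rm rank}(A_{\rm st}) < n$. Then the null space of $A_{\rm st}$ inside $\mathbb{Q}^n$ contains a nonzero vector $\vy$. Taking $\vx := \vy\,\epsilon$, i.e.\ $\vx_{\rm st}=\mathbf{0}$ and $\vx_{\rm I}=\vy$, both equations above are satisfied trivially (the first reads $\mathbf{0}=\mathbf{0}$, and the second reads $A_{\rm st}\vy = \mathbf{0}$, which holds by choice of $\vy$). Thus $A\vx=\mathbf{0}$ while $\vx\neq \mathbf{0}$, contradicting uniqueness of the zero solution.

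I do not anticipate a real obstacle: the entire argument is just separating standard and infinitesimal parts and invoking the well-understood quaternion rank theory. The only thing to watch is to pick the witness $\vx$ correctly in the necessity direction; the naive choice $\vx=\vy$ (purely standard) gives $A\vx = A_{\rm I}\vy\,\epsilon$, which need not vanish, whereas the choice $\vx=\vy\epsilon$ always works and yields a clean contradiction.
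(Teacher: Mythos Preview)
Your proof is correct and follows the same core idea as the paper: expand $A\vx$ into standard and infinitesimal parts and reduce to quaternion rank facts. The sufficiency direction is identical. For necessity, the paper packages the two quaternion equations into the $2m\times 2n$ block system
\[
\begin{bmatrix} A_{\rm st} & O \\ A_{\rm I} & A_{\rm st} \end{bmatrix}
\begin{bmatrix} \vx_{\rm st} \\ \vx_{\rm I} \end{bmatrix} = \mathbf{0}
\]
and argues that uniqueness of the zero solution forces this block matrix to have rank $2n$, hence ${\rm rank}(A_{\rm st})=n$. Your contrapositive with the explicit witness $\vx=\vy\epsilon$ is more direct and avoids appealing to the rank of the block matrix; the paper's formulation, on the other hand, introduces the adjoint matrix $\chi_A$ that is reused later (e.g.\ in Theorem~\ref{DQEigThm}). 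Both arguments are equally valid.
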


\begin{proof}
Since $A{\bf x}=A_{\rm st}{\bf x}_{\rm st}+(A_{\rm st}{\bf x}_{\rm I}+A_{\rm I}{\bf x}_{\rm st})\epsilon$, where ${\bf x}_{\rm st}, {\bf x}_{\rm I}\in \mathbb{Q}^n$, it is easy to see that $A{\bf x}={\bf 0}$ is equivalent to
\begin{equation}\label{MatEq1}
\left[\begin{array}{cc}
A_{\rm st}&O\\
A_{\rm I}&A_{\rm st}
\end{array}\right]\left[\begin{array}{c}{\bf x}_{\rm st}\\
{\bf x}_{\rm I}\end{array}\right]=\left[\begin{array}{c}
{\bf 0}\\
{\bf 0}
\end{array}\right].
\end{equation}
If ${\rm rank}(A_{\rm st})=n$, then $A_{\rm st}{\bf z}={\bf 0}$ has only a unique zero solution in $\mathbb{Q}^n$. Suppose $A{\bf x}={\bf 0}$. We know that $A_{\rm st}{{\bf x}}_{\rm st}={\bf 0}$ by (\ref{MatEq1}). Moreover, by quaternion matrix theory, we know ${{\bf x}}_{\rm st}={\bf 0}$ since ${\rm rank}(A_{\rm st})=n$. Consequently, by (\ref{MatEq1}), it holds that $A_{\rm st}{{\bf x}}_{\rm I}={\bf 0}$, which implies ${{\bf x}}_{\rm I}={\bf 0}$ due to the same reason. Hence ${{\bf x}}={\bf 0}$.

Conversely, if $A{\bf x}={\bf 0}$ has only a unique zero solution, i.e., (\ref{MatEq1}) has only a unique zero solution,  by quaternion matrix theory, we know that $${\rm rank}\left(\left[\begin{array}{cc}
A_{\rm st}&O\\
A_{\rm I}&A_{\rm st}
\end{array}\right]\right)=2n,$$ which implies ${\rm rank}(A_{\rm st})=n$. We  complete the proof.
\end{proof}

From Proposition \ref{OnlyZeroS}, we know that the right linear independence of $\Omega=\{{\bf u}^{(1)},{\bf u}^{(2)},\ldots,{\bf u}^{(n)}\}$ is completely determined by the right linear independence of the set consist of the standard part vectors of $\Omega$.

Based upon the right linear independence of the set of dual quaternion vectors stated above, we can define the rank of dual quaternion matrices. The (weak) rank of the dual quaternion matrix $A\in \mathbb{DQ}^{m\times n}$ is defined to be the maximum number of columns of $A$ which are (weakly) right linearly independent, and denoted by $({\rm rank}_w(A))$ ${\rm rank}(A)$.

\begin{Prop} \label{DQEQmn}
Let $A\in \mathbb{DQ}^{m\times n}$. If $m<n$, then $A{\bf x}={\bf 0}$ has a nonzero solution in $\mathbb{DQ}^{n}$.
\end{Prop}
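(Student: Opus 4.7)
The plan is to reduce the claim to Proposition \ref{OnlyZeroS}, which already converts the solvability of $A\mathbf{x} = \mathbf{0}$ in $\mathbb{DQ}^n$ into a rank condition on the standard part $A_{\rm st} \in \mathbb{Q}^{m \times n}$. Specifically, that proposition states that $A\mathbf{x} = \mathbf{0}$ has only the zero solution in $\mathbb{DQ}^n$ if and only if $\mathrm{rank}(A_{\rm st}) = n$. So it suffices to show that $\mathrm{rank}(A_{\rm st}) < n$ whenever $m < n$, and then invoke the contrapositive.

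For the rank bound, I would appeal to standard quaternion matrix theory: the rank of a quaternion matrix is the maximum number of right linearly independent columns, which is trivially at most the number of rows. Hence $\mathrm{rank}(A_{\rm st}) \leq m < n$, and in particular $\mathrm{rank}(A_{\rm st}) \neq n$. By Proposition \ref{OnlyZeroS}, $A\mathbf{x} = \mathbf{0}$ must admit a nonzero solution $\mathbf{x} \in \mathbb{DQ}^n$, which is exactly the conclusion.

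I do not anticipate any real obstacle here, since the work has essentially been done in Proposition \ref{OnlyZeroS}: the translation from dual quaternion linear systems to block quaternion systems via (\ref{MatEq1}) is already in place, and the quaternion rank inequality $\mathrm{rank}(A_{\rm st}) \leq \min(m,n)$ is part of the standard machinery for $\mathbb{Q}^{m \times n}$ recalled in Section \ref{Prelim}. If one prefers a direct argument avoiding Proposition \ref{OnlyZeroS}, one can alternatively produce a witness explicitly: pick a nonzero quaternion vector $\mathbf{x}_{\rm st} \in \mathbb{Q}^n$ with $A_{\rm st}\mathbf{x}_{\rm st} = \mathbf{0}$ (which exists since $m < n$), then solve the residual equation $A_{\rm st}\mathbf{x}_{\rm I} = -A_{\rm I}\mathbf{x}_{\rm st}$ for $\mathbf{x}_{\rm I} \in \mathbb{Q}^n$ and set $\mathbf{x} = \mathbf{x}_{\rm st} + \mathbf{x}_{\rm I}\epsilon$; solvability of the residual equation follows from a rank argument on the block matrix in (\ref{MatEq1}). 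Either route is short, so I would present the cleaner one via Proposition \ref{OnlyZeroS}.
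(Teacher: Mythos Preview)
Your proposal is correct and follows essentially the same route as the paper: the paper's one-line proof invokes ${\rm rank}(A)<n$, which (via the remark immediately after Proposition~\ref{OnlyZeroS}) is exactly the statement ${\rm rank}(A_{\rm st})<n$ together with the equivalence in Proposition~\ref{OnlyZeroS}. Your alternative explicit-witness argument is also fine, though note that one should find the pair $({\bf x}_{\rm st},{\bf x}_{\rm I})$ simultaneously from the kernel of the $2m\times 2n$ block matrix rather than fixing ${\bf x}_{\rm st}$ first, since the residual equation need not be solvable for an arbitrary choice of ${\bf x}_{\rm st}$.
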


\begin{proof}
It follows from the fact that ${\rm rank}(A)<n$.
\end{proof}

Now we discuss nonsingularity of dual quaternion matrices and some related basic properties. First, from the definition of nonsingularity of dual quaternion matrices, we know that, for given $A\in \mathbb{DQ}^{m\times m}$ and ${\bf b}\in \mathbb{DQ}^m$, if $A$ is nonsingular, then $A{\bf x}={\bf b}$ has a unique solution ${\bf x}=A^{-1}{\bf b}$. The following proposition shows that the nonsingular condition of a square dual quaternion  matrix  is similar to that of the quaternion matrix presented in \cite{Zh97}.


\begin{Prop}
Let $A,B\in \mathbb{DQ}^{m\times m}$. If $AB=I$, then $BA=I$.
\end{Prop}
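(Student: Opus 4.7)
The plan is to reduce the dual quaternion statement to the known quaternion analogue from \cite{Zh97} by splitting everything into standard and infinitesimal parts. Write $A = A_{\rm st} + A_{\rm I}\epsilon$ and $B = B_{\rm st} + B_{\rm I}\epsilon$ with $A_{\rm st}, A_{\rm I}, B_{\rm st}, B_{\rm I} \in \mathbb{Q}^{m\times m}$. Since $\epsilon^2 = 0$,
$$
AB = A_{\rm st}B_{\rm st} + (A_{\rm st}B_{\rm I} + A_{\rm I}B_{\rm st})\epsilon,
$$
so the hypothesis $AB = I_m$ is equivalent to the two quaternion matrix identities
$$
A_{\rm st}B_{\rm st} = I_m, \qquad A_{\rm st}B_{\rm I} + A_{\rm I}B_{\rm st} = O. \qquad (\ast)
$$

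First I would apply the corresponding quaternion result (stated in \cite{Zh97}, and used implicitly in the preliminaries): for square quaternion matrices, a one-sided inverse is a two-sided inverse. From $A_{\rm st}B_{\rm st} = I_m$ this immediately yields $B_{\rm st}A_{\rm st} = I_m$, which takes care of the standard part of $BA$.

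Next I would handle the infinitesimal part of $BA$. Starting from the second identity in $(\ast)$, multiply on the left by $B_{\rm st}$ and on the right by $A_{\rm st}$, and use $B_{\rm st}A_{\rm st} = I_m = A_{\rm st}B_{\rm st}$ to collapse the products. This gives
$$
B_{\rm I}A_{\rm st} + B_{\rm st}A_{\rm I} = O,
$$
which is precisely the infinitesimal-part condition needed for $BA = I_m$. Combining the two identities yields $BA = B_{\rm st}A_{\rm st} + (B_{\rm st}A_{\rm I} + B_{\rm I}A_{\rm st})\epsilon = I_m$, as required.

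No real obstacle is expected; the only non-trivial input is the underlying quaternion matrix fact (one-sided inverse $\Rightarrow$ two-sided inverse), which is a standard result for $\mathbb{Q}^{m\times m}$ established via the complex adjoint representation. The rest is a direct algebraic manipulation exploiting the nilpotency $\epsilon^2 = 0$ and the commutativity of $\epsilon$ with quaternion entries.
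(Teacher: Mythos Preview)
Your proof is correct and follows essentially the same approach as the paper: split into standard and infinitesimal parts, invoke the quaternion one-sided inverse result to get $B_{\rm st}A_{\rm st}=I_m$, and then derive $B_{\rm st}A_{\rm I}+B_{\rm I}A_{\rm st}=O$ from $(\ast)$. Your explicit left--right multiplication by $B_{\rm st}$ and $A_{\rm st}$ even spells out the step the paper leaves implicit.
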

\begin{proof}
Let $A=A_{\rm st}+A_{\rm I}\epsilon$ and $B=B_{\rm st}+B_{\rm I}\epsilon$. Since $AB=A_{\rm st}B_{\rm st}+(A_{\rm st}B_{\rm I}+A_{\rm I}B_{\rm st})\epsilon$, we know that $AB=I$ if and only if
\begin{equation}\label{DQEquat1}
\left\{
\begin{array}{l}
A_{\rm st}B_{\rm st}=I\\
A_{\rm st}B_{\rm I}+A_{\rm I}B_{\rm st}=O.
\end{array}
\right.
\end{equation}
Notice that the proposition is true for quaternion matrices \cite{Zh97}. From the first equality in (\ref{DQEquat1}), we know $B_{\rm st}A_{\rm st}=I$. Consequently, from the second equality in (\ref{DQEquat1}), we have $B_{\rm st}A_{\rm I}+B_{\rm I}A_{\rm st}=0$. Hence, $BA=B_{\rm st}A_{\rm st}+(B_{\rm st}A_{\rm I}+B_{\rm I}A_{\rm st})\epsilon=I$.
\end{proof}

\begin{Prop}\label{NonSinA}
Let $A=A_{\rm st}+A_{\rm I}\epsilon\in \mathbb{DQ}^{m\times m}$. Then $A$ is nonsingular if and only if $A_{\rm st}$ is nonsingular. More precisely, $A^{-1}=A_{\rm st}^{-1}-A_{\rm st}^{-1}A_{\rm I}A_{\rm st}^{-1}\epsilon$, provided $A_{\rm st}$ is nonsingular.
\end{Prop}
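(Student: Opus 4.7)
The plan is to work separately with the standard and infinitesimal parts. Writing $A = A_{\rm st} + A_{\rm I}\epsilon$ and a candidate inverse $B = B_{\rm st} + B_{\rm I}\epsilon$, the identity $\epsilon^2 = 0$ gives
\begin{equation*}
AB = A_{\rm st}B_{\rm st} + (A_{\rm st}B_{\rm I} + A_{\rm I}B_{\rm st})\epsilon,
\end{equation*}
so $AB = I_m$ is equivalent to the pair of quaternion matrix equations $A_{\rm st}B_{\rm st} = I_m$ and $A_{\rm st}B_{\rm I} + A_{\rm I}B_{\rm st} = O$. Everything else is driven by this decomposition and by the previous proposition, which guarantees that $AB = I_m$ alone is enough to conclude $B = A^{-1}$.

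For the \emph{necessity}, I assume $A$ is nonsingular with inverse $B = B_{\rm st} + B_{\rm I}\epsilon$. The first of the two quaternion equations above reads $A_{\rm st}B_{\rm st} = I_m$, and by the quaternion analogue (already invoked in the previous proposition), this forces $A_{\rm st}$ to be nonsingular with $A_{\rm st}^{-1} = B_{\rm st}$.

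For the \emph{sufficiency} and the explicit formula, I assume $A_{\rm st}$ is nonsingular and set $B := A_{\rm st}^{-1} - A_{\rm st}^{-1}A_{\rm I}A_{\rm st}^{-1}\epsilon$. A direct computation, using $\epsilon^2 = 0$, gives
\begin{equation*}
AB = A_{\rm st}A_{\rm st}^{-1} + \bigl(-A_{\rm st}A_{\rm st}^{-1}A_{\rm I}A_{\rm st}^{-1} + A_{\rm I}A_{\rm st}^{-1}\bigr)\epsilon = I_m + O\epsilon = I_m.
\end{equation*}
By the preceding proposition, $AB = I_m$ implies $BA = I_m$, so $A$ is nonsingular and $A^{-1} = B = A_{\rm st}^{-1} - A_{\rm st}^{-1}A_{\rm I}A_{\rm st}^{-1}\epsilon$, as claimed.

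There is essentially no hard step here: the argument is a routine split into real/dual parts combined with the already-established quaternion matrix theory and the previous proposition. The only point worth flagging is that one must resist trying to solve the second equation $A_{\rm st}B_{\rm I} + A_{\rm I}B_{\rm st} = O$ in the necessity direction (which would require additional work on $B_{\rm I}$); instead, one just reads off the standard part to obtain invertibility of $A_{\rm st}$, and constructs the explicit inverse in the sufficiency direction by guessing the formula suggested by the scalar dual quaternion identity $q^{-1} = q_{\rm st}^{-1} - q_{\rm st}^{-1}q_{\rm I}q_{\rm st}^{-1}\epsilon$ recalled earlier in the paper.
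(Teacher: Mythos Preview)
Your proof is correct and follows essentially the same approach as the paper: split into standard and infinitesimal parts, read off $A_{\rm st}B_{\rm st}=I_m$ for necessity, and verify the explicit formula for sufficiency. The only cosmetic difference is that the paper verifies both $AB=I_m$ and $BA=I_m$ directly in the sufficiency step, whereas you compute $AB=I_m$ and then invoke the preceding proposition to obtain $BA=I_m$; both are equally valid.
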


\begin{proof}
If $A=A_{\rm st}+A_{\rm I}\epsilon$ is nonsingular, there exists $B=B_{\rm st}+B_{\rm I}\epsilon\in \mathbb{DQ}^{m\times m}$ such that $AB=BA=I$, that is, $A_{\rm st}B_{\rm st}+(A_{\rm I}B_{\rm st}+A_{\rm st}B_{\rm I})\epsilon=B_{\rm st}A_{\rm st}+(B_{\rm I}A_{\rm st}+B_{\rm st}A_{\rm I})\epsilon=I$, which implies $A_{\rm st}B_{\rm st}=B_{\rm st}A_{\rm st}=I$. Hence $A_{\rm st}$ is nonsingular. Conversely, if $A_{\rm st}$ is nonsingular, then it is easy to verify that $AB=BA=I$, where $B=A_{\rm st}^{-1}-A_{\rm st}A_{\rm I}A_{\rm st}^{-1}\epsilon$. This means that $A$ is nonsingular.
\end{proof}

For $A=A_{\rm st}+A_{\rm I}\epsilon\in \mathbb{DQ}^{m\times n}$, we call the $2m\times 2n$ quaternion matrix
\begin{equation}\label{ADMatrix}
\chi_A:=\left[\begin{array}{cc}
A_{\rm st}&O\\
A_{\rm I}&A_{\rm st}\\
\end{array}
\right]\in \mathbb{Q}^{2m\times 2n},
\end{equation}
uniquely determined by $A$, the quaternion adjoint matrix or adjoint of the dual quaternion matrix $A$. 
It is obvious that $\chi_{I_m}=I_{2m}$. 
Furthermore, we have the following proposition.

\begin{Prop}\label{AdProp-1}
Let $A,B\in \mathbb{DQ}^{m\times m}$. Then we have

(a) $\chi_{AB}=\chi_A\chi_B$;

(b) $\chi_{A+B}=\chi_A +\chi_B$;

(c) $\chi_{A^{-1}}=(\chi_A)^{-1}$ if $A^{-1}$ exists.
\end{Prop}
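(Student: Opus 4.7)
The plan is a direct unpacking of the definition \eqref{ADMatrix} in all three parts. For (a), I would first write $A = A_{\rm st} + A_{\rm I}\epsilon$ and $B = B_{\rm st} + B_{\rm I}\epsilon$, use the rule $\epsilon^2 = 0$ to get $(AB)_{\rm st} = A_{\rm st}B_{\rm st}$ and $(AB)_{\rm I} = A_{\rm st}B_{\rm I} + A_{\rm I}B_{\rm st}$, and then form $\chi_{AB}$ from the definition. On the other side, I would perform the $2\times 2$ block multiplication of $\chi_A$ and $\chi_B$ and observe that the $(2,1)$ block is exactly $A_{\rm I}B_{\rm st} + A_{\rm st}B_{\rm I}$, giving equality. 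The diagonal blocks both reduce to $A_{\rm st}B_{\rm st}$ and the $(1,2)$ block is $O$, so the two block matrices coincide entry-by-entry.

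For (b), the calculation is even shorter: $(A+B)_{\rm st} = A_{\rm st} + B_{\rm st}$ and $(A+B)_{\rm I} = A_{\rm I} + B_{\rm I}$, and block-matrix addition distributes over each of the four blocks of $\chi_A$ and $\chi_B$, so the identity follows immediately from the definition \eqref{ADMatrix}.

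For (c), I would avoid any new direct calculation and instead reduce to part (a). Assuming $A^{-1}$ exists, we have $AA^{-1} = A^{-1}A = I_m$. Applying the already-established identity $\chi_{XY} = \chi_X \chi_Y$ gives $\chi_A \chi_{A^{-1}} = \chi_{AA^{-1}} = \chi_{I_m}$ and similarly $\chi_{A^{-1}} \chi_A = \chi_{I_m}$. Since $\chi_{I_m} = I_{2m}$ (a one-line check from \eqref{ADMatrix}, already noted in the excerpt), this identifies $\chi_{A^{-1}}$ as the two-sided inverse of $\chi_A$ in $\mathbb{Q}^{2m \times 2m}$, i.e.\ $\chi_{A^{-1}} = (\chi_A)^{-1}$.

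There is no real obstacle here; the entire proposition is a sequence of routine block-matrix identities, and the only thing to be careful about is the ordering of the $(2,1)$ blocks in part (a), since quaternion multiplication is noncommutative and the cross-term $A_{\rm st}B_{\rm I} + A_{\rm I}B_{\rm st}$ must be written in the correct order to match the block product. Once (a) is in hand, (c) is essentially a corollary via $\chi_{I_m} = I_{2m}$.
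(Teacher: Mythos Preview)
Your proposal is correct. Parts (a) and (b) match the paper's treatment, which simply declares them trivial.

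For part (c) you take a genuinely different route. The paper proceeds by direct computation on both sides: it first invokes Proposition~\ref{NonSinA} to conclude that $A_{\rm st}$ is nonsingular, then writes down the explicit block inverse
\[
(\chi_A)^{-1}=\begin{bmatrix} A_{\rm st}^{-1} & O \\ -A_{\rm st}^{-1}A_{\rm I}A_{\rm st}^{-1} & A_{\rm st}^{-1}\end{bmatrix},
\]
and separately computes $\chi_{A^{-1}}$ from the formula $A^{-1}=A_{\rm st}^{-1}-A_{\rm st}^{-1}A_{\rm I}A_{\rm st}^{-1}\epsilon$, observing the two agree. Your argument instead reuses (a): from $AA^{-1}=A^{-1}A=I_m$ and $\chi_{I_m}=I_{2m}$ you get $\chi_A\chi_{A^{-1}}=\chi_{A^{-1}}\chi_A=I_{2m}$ directly. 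This is cleaner and more conceptual, and avoids both the appeal to Proposition~\ref{NonSinA} and the explicit block-inverse computation. The paper's approach, on the other hand, has the minor by-product of displaying the explicit block form of $(\chi_A)^{-1}$, which your argument establishes only implicitly.
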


\begin{proof}
The proofs of (a) and (b) are trivial. We now prove (c). First, by Proposition \ref{NonSinA},  it follows that $A_{\rm st}$ is nonsingular. Furthermore, by (\ref{ADMatrix}), we know
 $$
(\chi_{A})^{-1}=\left[\begin{array}{cc}
A^{-1}_{\rm st}&O\\
-A_{\rm st}^{-1}A_{\rm I}A_{\rm st}^{-1}&A^{-1}_{\rm st}\\
\end{array}
\right].
$$
On the other hand, by Proposition \ref{NonSinA}, we know $A^{-1}=A_{\rm st}^{-1}-A_{\rm st}^{-1}A_{\rm I}A_{\rm st}^{-1}\epsilon$, which means
$$
\chi_{A^{-1}}=\left[\begin{array}{cc}
A^{-1}_{\rm st}&O\\
-A_{\rm st}^{-1}A_{\rm I}A_{\rm st}^{-1}&A^{-1}_{\rm st}\\
\end{array}
\right].
$$
Hence, we obtain (c) and complete the proof.
\end{proof}

Using the quaternion adjoint matrix (\ref{ADMatrix}) of the dual quaternion matrix and the complex adjoint matrix \cite{ZW01} of the quaternion matrix , we can establish the connection between dual quaternion matrices and complex matrices.

\begin{Thm}\label{DQEigThm}
Let $A\in \mathbb{DQ}^{m\times m}$. Then the following statement are equivalent:

(a) $A$ is nonsingular;

(b) $A{\bf x}={\bf 0}$ has a unique solution ${\bf 0}$ in $\mathbb{DQ}^m$, i.e., $N(A):=\{{\bf x}\in \mathbb{DQ}^m~|~A{\bf x}={\bf 0}\}=\{{\bf 0}\}$.

(c) $\chi_A$  is nonsingular.

\end{Thm}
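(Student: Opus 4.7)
The plan is to prove the equivalence by a short cycle (a) $\Rightarrow$ (c) $\Rightarrow$ (b) $\Rightarrow$ (a), leveraging the machinery already assembled in Propositions \ref{OnlyZeroS}, \ref{NonSinA}, and \ref{AdProp-1}. None of the implications should require a new idea; each is essentially a one-step translation between a statement about $A$ over $\mathbb{DQ}$ and a statement about $\chi_A$ (or $A_{\rm st}$) over $\mathbb{Q}$.

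For (a) $\Rightarrow$ (c), I would invoke Proposition \ref{AdProp-1}(c) directly: assuming $A^{-1}$ exists, $\chi_{A^{-1}}$ is a well-defined quaternion matrix and equals $(\chi_A)^{-1}$, so $\chi_A$ is nonsingular. For (c) $\Rightarrow$ (b), the key observation is the identity (\ref{MatEq1}) derived in the proof of Proposition \ref{OnlyZeroS}, namely that $A\mathbf{x}=\mathbf{0}$ in $\mathbb{DQ}^m$ is equivalent to $\chi_A\begin{bmatrix}\mathbf{x}_{\rm st}\\ \mathbf{x}_{\rm I}\end{bmatrix}=\mathbf{0}$ in $\mathbb{Q}^{2m}$. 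If $\chi_A$ is nonsingular, the latter forces $\mathbf{x}_{\rm st}=\mathbf{x}_{\rm I}=\mathbf{0}$, hence $\mathbf{x}=\mathbf{0}$. For (b) $\Rightarrow$ (a), Proposition \ref{OnlyZeroS} (with $n=m$) gives $\mathrm{rank}(A_{\rm st})=m$, so $A_{\rm st}$ is a nonsingular quaternion matrix; Proposition \ref{NonSinA} then promotes this to nonsingularity of $A$ and even supplies the explicit inverse.

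The only point that deserves a second look is the appeal in (c) $\Rightarrow$ (b): one must be sure the correspondence in (\ref{MatEq1}) is a bijection between $\mathbb{DQ}^m$ and $\mathbb{Q}^{2m}$ via $\mathbf{x}=\mathbf{x}_{\rm st}+\mathbf{x}_{\rm I}\epsilon \leftrightarrow (\mathbf{x}_{\rm st},\mathbf{x}_{\rm I})$, but this is just the definition of a dual quaternion vector, so no subtlety arises. I do not anticipate a real obstacle; the mild care needed is to make sure we never invoke quaternion-matrix nonsingularity without first reducing to the standard part, since dual-quaternion nonsingularity is defined via a two-sided inverse while rank talks about columns of the standard part. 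Propositions \ref{OnlyZeroS} and \ref{NonSinA} handle exactly this bridge, so the proof reduces to wiring them together with Proposition \ref{AdProp-1}(c).
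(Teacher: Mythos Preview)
Your proposal is correct. The paper proves the same equivalence by traversing the cycle in the opposite direction, (a) $\Rightarrow$ (b) $\Rightarrow$ (c) $\Rightarrow$ (a): it gets (b) $\Rightarrow$ (c) from the same correspondence (\ref{MatEq1}) together with an external quaternion-matrix result (Theorem~4.3 of \cite{Zh97}), and (c) $\Rightarrow$ (a) by reading off from the block structure of $\chi_A$ that $A_{\rm st}$ is invertible and then applying Proposition~\ref{NonSinA}. Your route instead uses Proposition~\ref{AdProp-1}(c) for (a) $\Rightarrow$ (c) and Proposition~\ref{OnlyZeroS} for (b) $\Rightarrow$ (a), which keeps the argument entirely self-contained within the paper's own propositions and avoids the citation to \cite{Zh97}. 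Both approaches rest on the same idea---translating between $A$ and $\chi_A$ via (\ref{MatEq1}) and between $A$ and $A_{\rm st}$ via Proposition~\ref{NonSinA}---so the difference is organizational rather than substantive.
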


\begin{proof}
(a) $\Rightarrow$ (b): This is straight forward.

(b) $\Rightarrow$ (c): It is obvious that $A{\bf x}={\bf 0}$ is equivalent to
\begin{equation}\label{Q-NonSin-1}
\chi_A\left[\begin{array}{c}
{\bf x}_{\rm st}\\
{\bf x}_{\rm I}
\end{array}\right]={\bf 0},
\end{equation}
which implies that (b) is equivalent to that (\ref{Q-NonSin-1}) has only zero solution in $\mathbb{Q}^{2m}$. Consequently, by Theorem 4.3 in \cite{Zh97}, we know that $\chi_A$ is nonsingular.

(c) $\Rightarrow$ (a): Since $\chi_A$ is nonsingular, from the structure of $\chi_A$, must there exist $B\in \mathbb{Q}^{m\times m}$ such that $A_{\rm st}B=I_{m}$, which implies $A_{\rm st}$ is nonsingular. By Proposition \ref{NonSinA}, we know that (a) is true.
\end{proof}

\begin{Prop}\label{Rank-Nonsingular}
Let $A=A_{\rm st}+A_{\rm I}\epsilon\in \mathbb{DQ}^{m\times m}$. Then $A$ is nonsingular if and only if ${\rm rank}(A)=m$.
\end{Prop}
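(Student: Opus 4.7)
My plan is to chain together three equivalences already established in the excerpt, with no new calculation required.

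First, by Proposition \ref{NonSinA}, $A = A_{\rm st} + A_{\rm I}\epsilon$ is nonsingular as a dual quaternion matrix if and only if its standard part $A_{\rm st}$ is nonsingular as a quaternion matrix in $\mathbb{Q}^{m\times m}$. Second, from the quaternion matrix theory recalled in Section~\ref{Prelim} (the statement immediately following Definition \ref{Def-RIn}), $A_{\rm st} \in \mathbb{Q}^{m\times m}$ is nonsingular if and only if it has full quaternion rank, i.e.\ ${\rm rank}(A_{\rm st}) = m$. So far this gives
\[
A \text{ nonsingular}\ \Longleftrightarrow\ A_{\rm st} \text{ nonsingular}\ \Longleftrightarrow\ {\rm rank}(A_{\rm st}) = m.
\]

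Next I would tie ${\rm rank}(A) = m$ to the same condition on $A_{\rm st}$. By the definition of the rank of a dual quaternion matrix given just after Proposition \ref{OnlyZeroS}, ${\rm rank}(A) = m$ is equivalent to the right linear independence of all $m$ columns of $A$ in the sense of Definition \ref{Def-DQVecIn}. Using the observation immediately following Definition \ref{Def-DQVecIn}, this right linear independence is in turn equivalent to $A\vx = \0$ having only the trivial solution in $\mathbb{DQ}^m$. Proposition \ref{OnlyZeroS} (applied with $n = m$) states precisely that this trivial-kernel condition is equivalent to ${\rm rank}(A_{\rm st}) = m$. Combining with the first chain,
\[
{\rm rank}(A) = m\ \Longleftrightarrow\ {\rm rank}(A_{\rm st}) = m\ \Longleftrightarrow\ A \text{ nonsingular},
\]
which is the claim.

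I do not anticipate any real obstacle: the whole proof is an assembly of Proposition \ref{NonSinA}, Proposition \ref{OnlyZeroS} and the stated definitions of right linear independence and rank. The only care needed is to note that the rank characterization passes through the standard part via Proposition \ref{OnlyZeroS}, so that one never has to analyze independence directly over $\mathbb{DQ}$, and to observe that for a square matrix the ``maximum number of independent columns equals $m$'' condition is the same as independence of the full set of columns, which is exactly what Proposition \ref{OnlyZeroS} expresses through the trivial-kernel statement.
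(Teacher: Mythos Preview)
Your proposal is correct and follows essentially the same route as the paper's proof: both arguments hinge on Proposition~\ref{OnlyZeroS} (to pass from ${\rm rank}(A)=m$ to ${\rm rank}(A_{\rm st})=m$ via the trivial-kernel characterization) and Proposition~\ref{NonSinA} (to pass between nonsingularity of $A$ and of $A_{\rm st}$). The only cosmetic difference is that the paper handles the forward implication directly from the existence of $A^{-1}$, whereas you route both directions symmetrically through ${\rm rank}(A_{\rm st})=m$; the content is the same.
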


\begin{proof}
Suppose that $A$ is nonsingular, it is clear that $A{\bf x}={\bf 0}$ has only zero solution in $\mathbb{DQ}^m$, i.e., the set of column vectors of $A$ is right linearly independent, which means ${\rm rank(A)=m}$. Conversely, suppose ${\rm rank(A)=m}$. Then $A{\bf x}={\bf 0}$ has only zero solution. By Proposition \ref{OnlyZeroS}, we know ${\rm rank}(A_{\rm st})=m$. Consequently, by the well-known property in quaternion matrices, we know that $A_{\rm st}$ is nonsingular. Finally, by Proposition \ref{NonSinA}, we know that $A$ is nonsingular.
\end{proof}

Let $A\in \mathbb{DQ}^{m\times n}$, and denote $B=A^*A$. We have the following proposition.
\begin{Prop}
If ${\rm rank}(A)=n$, then ${\bf x}^*B{\bf x}\in \mathbb{D}_{++}$ is appreciable for any ${\bf x}\in \mathbb{DQ}^n$ with being appreciable.
\end{Prop}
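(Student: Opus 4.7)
The plan is to rewrite $\mathbf{x}^*B\mathbf{x}$ as the squared dual-quaternion norm of $A\mathbf{x}$, reduce to looking at its standard part, and then apply Proposition \ref{OnlyZeroS} to conclude that this standard part is strictly positive.

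First I would set $\mathbf{y} := A\mathbf{x} \in \mathbb{DQ}^m$ and observe that
\[
\mathbf{x}^* B \mathbf{x} \;=\; \mathbf{x}^* A^* A\mathbf{x} \;=\; \mathbf{y}^* \mathbf{y} \;=\; \sum_{i=1}^{m} \bar{y}_i y_i.
\]
Writing each coordinate as $y_i = (y_i)_{\rm st} + (y_i)_{\rm I}\epsilon$, a short computation gives
\[
\bar y_i y_i \;=\; |(y_i)_{\rm st}|^2 \;+\; \bigl(\overline{(y_i)_{\rm st}}(y_i)_{\rm I} + \overline{(y_i)_{\rm I}}(y_i)_{\rm st}\bigr)\epsilon,
\]
and since the infinitesimal coefficient is $2\,\mathrm{Re}\bigl(\overline{(y_i)_{\rm st}}(y_i)_{\rm I}\bigr) \in \mathbb{R}$, the quantity $\bar y_i y_i$ is a dual number. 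Summing in $i$, we conclude that $\mathbf{x}^* B\mathbf{x} \in \mathbb{D}$, with standard part $\sum_{i=1}^m |(y_i)_{\rm st}|^2 = \|\mathbf{y}_{\rm st}\|^2 = \|A_{\rm st}\mathbf{x}_{\rm st}\|^2$.

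Next I would invoke the structural result for rank already set up in the paper. Because $\mathrm{rank}(A) = n$ means the $n$ columns of $A$ are right linearly independent in the sense of Definition \ref{Def-DQVecIn}, the system $A\mathbf{z} = \mathbf{0}$ has only the zero solution in $\mathbb{DQ}^n$, so by Proposition \ref{OnlyZeroS} we have $\mathrm{rank}(A_{\rm st}) = n$. Hence $A_{\rm st}$ has right-linearly-independent columns as a quaternion matrix, so $A_{\rm st}\mathbf{z}_{\rm st} = \mathbf{0}$ forces $\mathbf{z}_{\rm st} = \mathbf{0}$.

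Finally I would use the appreciability of $\mathbf{x}$: by definition $\mathbf{x}_{\rm st} \neq \mathbf{0}$, so the previous step yields $A_{\rm st}\mathbf{x}_{\rm st} \neq \mathbf{0}$ and therefore $\|A_{\rm st}\mathbf{x}_{\rm st}\|^2 > 0$ as a positive real number. This shows the standard part of the dual number $\mathbf{x}^*B\mathbf{x}$ is strictly positive, which by the total order on $\mathbb{D}$ recalled in Section \ref{Du-number} places $\mathbf{x}^*B\mathbf{x}$ in $\mathbb{D}_{++}$ and certifies that it is appreciable. The only step that requires a little care is verifying that $\mathbf{y}^*\mathbf{y}$ is genuinely a dual number (i.e.\ that the imaginary-quaternion parts of $\bar y_i y_i$ vanish); the rest is a clean reduction to the quaternion rank statement and to Proposition \ref{OnlyZeroS}.
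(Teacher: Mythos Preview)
Your proof is correct and follows essentially the same approach as the paper: set $\mathbf{y}=A\mathbf{x}$, observe that $\mathbf{x}^*B\mathbf{x}=\mathbf{y}^*\mathbf{y}$ is a dual number whose standard part is $\|A_{\rm st}\mathbf{x}_{\rm st}\|^2$, and then use $\mathrm{rank}(A_{\rm st})=n$ (via Proposition~\ref{OnlyZeroS}) together with $\mathbf{x}_{\rm st}\neq\mathbf{0}$ to force this standard part to be strictly positive. The only difference is cosmetic: you verify $\mathbf{y}^*\mathbf{y}\in\mathbb{D}$ coordinate-wise, while the paper writes the expansion $\|\mathbf{y}_{\rm st}\|^2+(\langle\mathbf{y}_{\rm st},\mathbf{y}_{\rm I}\rangle+\langle\mathbf{y}_{\rm I},\mathbf{y}_{\rm st}\rangle)\epsilon$ directly and argues by contradiction.
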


\begin{proof}
First, since ${\rm rank}(A)=n$, it holds that ${\rm rank}(A_{\rm st})=n$ by Proposition \ref{OnlyZeroS}. For any ${\bf x}\in \mathbb{DQ}^n$ with being appreciable, denote ${\bf y}={\bf y}_{\rm st}+{\bf y}_{\rm I}\epsilon:=A{\bf x}$. It is clear that ${\bf x}^*B{\bf x}=(A{\bf x})^*A{\bf x}=\|{\bf y}_{\rm st}\|^2+(\langle{\bf y}_{\rm st},{\bf y}_{\rm I}\rangle+\langle{\bf y}_{\rm I},{\bf y}_{\rm st}\rangle)\epsilon\in \mathbb{D}_+$. We claim that $\|{\bf y}_{\rm st}\|^2>0$. In fact, if $\|{\bf y}_{\rm st}\|^2=0$, then ${\bf y}_{\rm st}={\bf 0}$, which implies $A{\bf x}={\bf y}_{\rm I}\epsilon$. Consequently, we know
$$
\left[\begin{array}{cc}
A_{\rm st}&O\\
A_{\rm I}&A_{\rm st}
\end{array}
\right]\left[\begin{array}{c}
{\bf x}_{\rm st}\\
{\bf x}_{\rm I}
\end{array}
\right]=\left[\begin{array}{c}
{\bf 0}\\
{\bf y}_{\rm I}
\end{array}\right].
$$
Hence $A_{\rm st}{\bf x}_{\rm st}={\bf 0}$, where ${\bf x}_{\rm st}\neq{\bf 0}$, which implies $A_{\rm st}{\bf z}={\bf 0}$ has a nonzero solution, and hence  ${\rm rank}(A_{\rm st})<n$. It is a contradiction.
Since $\|{\bf y}_{\rm st}\|^2>0$, we know that ${\bf x}^*B{\bf x}\in \mathbb{D}_{++}$.
\end{proof}

The following theorem characterizes the relationship between nonsingularity and eigenvalues of dual quaternion matrices.

\begin{Thm}\label{Eigen-StApp}
Let $A=A_{\rm st}+A_{\rm I}\epsilon\in \mathbb{DQ}^{m\times m}$. If $A$ is nonsingular, then every right eigenvalue of $A$ is appreciable.
\end{Thm}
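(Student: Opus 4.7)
The plan is to argue by contradiction, exploiting the decomposition of the eigenequation into standard and infinitesimal parts and then invoking Proposition \ref{NonSinA} to reach a contradiction with the appreciability of the eigenvector.

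First, I would suppose a right eigenvalue $\lambda$ of $A$ is \emph{not} appreciable, i.e.\ $\lambda_{\rm st}=0$, so that $\lambda=\lambda_{\rm I}\epsilon$ for some $\lambda_{\rm I}\in\mathbb{Q}$. Let $\mathbf{x}=\mathbf{x}_{\rm st}+\mathbf{x}_{\rm I}\epsilon\in\mathbb{DQ}^m$ be an associated right eigenvector; by the definition of a right eigenvalue given just before equation (\ref{RightEig}), $\mathbf{x}$ is appreciable, meaning $\mathbf{x}_{\rm st}\neq\mathbf{0}$.

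Next I would expand both sides of $A\mathbf{x}=\mathbf{x}\lambda$ using the rules (\ref{e1}) componentwise:
\begin{equation*}
A_{\rm st}\mathbf{x}_{\rm st}+\bigl(A_{\rm st}\mathbf{x}_{\rm I}+A_{\rm I}\mathbf{x}_{\rm st}\bigr)\epsilon \;=\; \mathbf{x}_{\rm st}\lambda_{\rm I}\epsilon.
\end{equation*}
Matching standard parts forces $A_{\rm st}\mathbf{x}_{\rm st}=\mathbf{0}$. Since $A$ is nonsingular, Proposition \ref{NonSinA} tells us that the quaternion matrix $A_{\rm st}$ is itself nonsingular, so $A_{\rm st}\mathbf{z}=\mathbf{0}$ admits only the trivial solution in $\mathbb{Q}^m$. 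Hence $\mathbf{x}_{\rm st}=\mathbf{0}$, contradicting the appreciability of $\mathbf{x}$.

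I do not expect any serious obstacle: the proof is essentially a one-line application of Proposition \ref{NonSinA} combined with splitting the eigenequation into its two real components. The only subtlety worth flagging is to remember that the definition of ``right eigenvalue'' in this paper requires the eigenvector $\mathbf{x}$ to be appreciable, without which the statement would be vacuous or false (the zero standard-part case is precisely what the nonsingularity of $A_{\rm st}$ rules out).
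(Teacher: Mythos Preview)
Your proposal is correct and follows essentially the same route as the paper: assume $\lambda=\lambda_{\rm I}\epsilon$, expand $A\mathbf{x}=\mathbf{x}\lambda$ to get $A_{\rm st}\mathbf{x}_{\rm st}=\mathbf{0}$, and use the nonsingularity of $A_{\rm st}$ (the paper cites Proposition~\ref{OnlyZeroS} and Proposition~\ref{SingularP} rather than Proposition~\ref{NonSinA}, but the content is the same) to contradict $\mathbf{x}_{\rm st}\neq\mathbf{0}$.
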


\begin{proof}
Suppose that $A$ is nonsingular. By Proposition \ref{OnlyZeroS}, we know that $A_{\rm st}$ is nonsingular. If there is an infinitesimal right eigenvalue $\lambda=\lambda_{\rm I}\epsilon$ of $A$, with the associated right eigenvector ${\bf x}={\bf x}_{\rm st}+{\bf x}_{\rm I}\epsilon$, then by the definition of right eigenvalue of $A$, we have $A_{\rm st}{\bf x}_{\rm st}+(A_{\rm I}{\bf x}_{\rm st}+A_{\rm st}{\bf x}_{\rm I})\epsilon={\bf x}_{\rm st}\lambda_{\rm I}\epsilon$, which implies $A_{\rm st}{\bf x}_{\rm st}={\bf 0}$. This means that $A_{\rm st}$ is singular by Proposition \ref{SingularP}, since ${\bf x}_{\rm st}\neq {\bf 0}$. It is a contradiction.
\end{proof}

\begin{Prop}
Let $A\in \mathbb{DQ}^{m\times m}$. Suppose that $\lambda=\lambda_{\rm st}+\lambda_{\rm I}\epsilon\in \mathbb{DQ}$ is a right eigenvalue of $A$, with the associated right eigenvector ${\bf x}$. If $A$ is nonsingular, then $\lambda^{-1}$ is a right eigenvalue of $A^{-1}$, with the associated right eigenvector ${\bf x}$. \end{Prop}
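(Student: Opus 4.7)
The proposition is a direct analogue of the classical fact that if $\lambda$ is an eigenvalue of an invertible matrix $A$, then $\lambda^{-1}$ is an eigenvalue of $A^{-1}$, and the same eigenvector works. The only wrinkle here is that in the dual-quaternion setting, one must check that $\lambda^{-1}$ actually makes sense, i.e.\ that $\lambda$ is invertible as a dual quaternion; recall that a dual quaternion is invertible if and only if it is appreciable.

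The plan is therefore as follows. First, use the nonsingularity hypothesis on $A$ together with Theorem \ref{Eigen-StApp} to conclude that the right eigenvalue $\lambda=\lambda_{\rm st}+\lambda_{\rm I}\epsilon$ is appreciable, so that $\lambda^{-1}=\lambda_{\rm st}^{-1}-\lambda_{\rm st}^{-1}\lambda_{\rm I}\lambda_{\rm st}^{-1}\epsilon$ exists in $\mathbb{DQ}$. Second, start from the defining identity $A{\bf x}={\bf x}\lambda$ and multiply on the left by $A^{-1}$ (which exists by hypothesis) to obtain ${\bf x}=A^{-1}({\bf x}\lambda)$. Here the associativity of dual quaternion matrix multiplication against right scalar multiplication gives $A^{-1}({\bf x}\lambda)=(A^{-1}{\bf x})\lambda$, so ${\bf x}=(A^{-1}{\bf x})\lambda$. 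Third, right-multiply both sides by $\lambda^{-1}$ to get $A^{-1}{\bf x}={\bf x}\lambda^{-1}$. Finally, note that ${\bf x}$ is appreciable by the definition of a right eigenvector, so this identity exhibits $\lambda^{-1}$ as a right eigenvalue of $A^{-1}$ with associated right eigenvector ${\bf x}$, as required.

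There is no real obstacle. The only mild subtlety is checking appreciability of $\lambda$ in order to legitimately form $\lambda^{-1}$, but this is exactly what Theorem \ref{Eigen-StApp} supplies. The manipulations themselves are purely formal once associativity of the right $\mathbb{DQ}$-action is invoked, so the proof is expected to be a few lines long.
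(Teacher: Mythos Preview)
Your proposal is correct and follows essentially the same approach as the paper: invoke Theorem~\ref{Eigen-StApp} to ensure $\lambda$ is appreciable (hence invertible), then left-multiply $A{\bf x}={\bf x}\lambda$ by $A^{-1}$ and right-multiply by $\lambda^{-1}$ to obtain $A^{-1}{\bf x}={\bf x}\lambda^{-1}$. The paper's proof is exactly this, written out in two sentences.
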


\begin{proof}
Since $A$ is nonsingular, by Theorem \ref{Eigen-StApp}, $\lambda$ is appreciable, which implies that $\lambda^{-1}=\lambda_{\rm st}^{-1}-\lambda_{\rm st}^{-1}\lambda_{\rm I}\lambda_{\rm st}^{-1}\epsilon$, Consequently, from $A{\bf x}={\bf x}\lambda$ and $A^{-1}A=I$, it follows that ${\bf x}=A^{-1}{\bf x}\lambda$, which implies $A^{-1}{\bf x}={\bf x}\lambda^{-1}$ since $\lambda\lambda^{-1}=1$. Hence, we obtain the desired result and complete the proof.
\end{proof}


\section{Minimax principle for eigenvalues of dual quaternion Hermitian matrices}\label{MinimaxPrin}
In this section, we present a Courant-Fischer type minimax principle for eigenvalues of dual quaternion Hermitian matrices. Because $\mathbb{D}$ is a total order space in the meaning of total order stated in Section \ref{Du-number}, unless otherwise specified, for $p,q\in \mathbb{D}$, $p\leq q$, if and only if $q-p\in \mathbb{D}_+$, and the related maximization and minimization are also discussed in this sense. We start by recalling the following unitary decomposition theorem of dual quaternion Hermitian matrices.

\begin{Thm}\label{HUDec}\cite{QL21} Let $A=A_{\rm st}+A_{\rm I}\epsilon\in \mathbb{DQ}^{m\times m}$ be an Hermitian matrix. Then there are
unitary matrix $U\in \mathbb{DQ}^{m\times m}$ and a diagonal matrix $\Sigma\in \mathbb{DQ}^{m\times m}$ such that $A=U\Sigma U^*$, where
\begin{equation}\label{Sigmn}\Sigma := {\rm diag} (\lambda_1+\lambda_{1,1}\epsilon,\ldots,\lambda_1+\lambda_{1,k_1}\epsilon, \lambda_2+\lambda_{2,1}\epsilon,\ldots,\lambda_r+\lambda_{r,k_r}\epsilon),
\end{equation}
where $\lambda_1>\lambda_2>\ldots>\lambda_r$ are real numbers, $\lambda_i$ is a $k_i$-multiple right eigenvalue of $A_{\rm st}$, $\lambda_{i,1}\geq \lambda_{i,2}\geq\ldots\geq\lambda_{i,k_i}$ are also real numbers. Counting possible multiplicities $\lambda_{i,j}$, the form $\Sigma$ is unique.
\end{Thm}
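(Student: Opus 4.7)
The plan is to lift the quaternion spectral theorem for Hermitian matrices to the dual quaternion setting in three stages: first diagonalize the standard part, then unitarily diagonalize each diagonal block of the infinitesimal part within the eigenspaces of $A_{\rm st}$, and finally cancel the surviving off-diagonal infinitesimal entries by an infinitesimal unitary perturbation.

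First, $A^*=A$ forces $A_{\rm st}^*=A_{\rm st}$ and $A_{\rm I}^*=A_{\rm I}$. The classical spectral theorem for quaternion Hermitian matrices produces a unitary $V\in\mathbb{Q}^{m\times m}$ with $V^*A_{\rm st}V=D_{\rm st}:=\mathrm{diag}(\lambda_1 I_{k_1},\ldots,\lambda_r I_{k_r})$, where $\lambda_1>\cdots>\lambda_r$ are the distinct real eigenvalues of $A_{\rm st}$. Writing $V^*A_{\rm I}V=[B_{ij}]$ conformally with this block partition, each $B_{ii}$ is itself quaternion Hermitian, so I pick unitary $W_i\in\mathbb{Q}^{k_i\times k_i}$ with $W_i^*B_{ii}W_i=\mathrm{diag}(\lambda_{i,1},\ldots,\lambda_{i,k_i})$ arranged in decreasing order, and set $W:=\mathrm{diag}(W_1,\ldots,W_r)$. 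Since $W$ is block diagonal with the same block structure as $D_{\rm st}$, we have $W^*D_{\rm st}W=D_{\rm st}$, so $(VW)^*A(VW)=D_{\rm st}+M\epsilon$ with $M$ Hermitian and each diagonal block of $M$ already diagonal.

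The remaining step is to annihilate the off-diagonal blocks of $M$ without disturbing $D_{\rm st}$. I look for a unitary of the form $P=I+Q\epsilon$; since $P^*P=I+(Q+Q^*)\epsilon$, unitarity amounts to $Q^*=-Q$. A direct computation gives
\[
P^*(D_{\rm st}+M\epsilon)P=D_{\rm st}+(M+D_{\rm st}Q-QD_{\rm st})\epsilon,
\]
and blockwise $(D_{\rm st}Q-QD_{\rm st})_{ij}=(\lambda_i-\lambda_j)Q_{ij}$. Because $\lambda_i-\lambda_j\neq 0$ for $i\neq j$, I set $Q_{ij}:=M_{ij}/(\lambda_j-\lambda_i)$ for $i\neq j$ and $Q_{ii}:=0$; the identity $M_{ji}=M_{ij}^*$ then makes $Q^*=-Q$ automatic. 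Taking $U:=VWP$ yields $U^*AU=\Sigma$ in the prescribed form. Uniqueness follows at once: the $\lambda_i$ with multiplicities $k_i$ are the spectrum of $A_{\rm st}$, and the $\lambda_{i,j}$ within each block must be the real eigenvalues of the Hermitian quaternion matrix $B_{ii}$, which is a spectral invariant of the restriction of $A_{\rm I}$ to the $\lambda_i$-eigenspace of $A_{\rm st}$ and hence independent of the specific $V$ and $W$ chosen. The principal obstacle is keeping the standard part fixed while eliminating the off-diagonal infinitesimal part and maintaining unitarity simultaneously; it works only because the Sylvester-type equation $(\lambda_i-\lambda_j)Q_{ij}=-M_{ij}$ is uniquely solvable, which in turn rests on the separation of the distinct real eigenvalues $\lambda_1,\ldots,\lambda_r$ of $A_{\rm st}$.
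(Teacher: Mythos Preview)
The paper does not supply its own proof of this theorem; it is quoted without argument from \cite{QL21}. Your three-step construction---diagonalize $A_{\rm st}$ by a quaternion unitary $V$, diagonalize the resulting diagonal blocks of $V^*A_{\rm I}V$ by a block-diagonal $W$, then kill the off-diagonal infinitesimal blocks with $P=I+Q\epsilon$ via the Sylvester equations $(\lambda_i-\lambda_j)Q_{ij}=-M_{ij}$---is correct and is exactly the approach used in the cited source, so there is nothing to contrast. The one place that could be sharpened is the uniqueness sketch: rather than arguing that $B_{ii}$ is a spectral invariant of a restriction (which requires checking independence of the choice of $V$), it is cleaner to note that the diagonal entries of $\Sigma$ are precisely the $m$ dual-number eigenvalues of $A$ (Proposition~\ref{Prop1}), and these are intrinsic.
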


\begin{Lem}\label{Lemma1}
Let $A\in \mathbb{DQ}^{m\times m}$ be an Hermitian matrix, and $\lambda^{(1)}\geq\lambda^{(2)}\geq\ldots\geq\lambda^{(m)}$ be right eigenvalues of $A$. Then we have
\begin{equation}\label{MaxMinLam-1}
\lambda^{(1)}=\max\left\{\|{\bf x}\|^{-2}({\bf x}^*A{\bf x})~|~{\bf x}\in \mathbb{DQ}^m\backslash[{\bf 0}]\right\}
\end{equation}
and
\begin{equation}\label{MaxMinLam-2}
\lambda^{(m)}=\min\left\{\|{\bf x}\|^{-2}({\bf x}^*A{\bf x})~|~{\bf x}\in \mathbb{DQ}^m\backslash[{\bf 0}]\right\}.
\end{equation}
\end{Lem}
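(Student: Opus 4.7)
The plan is to reduce the problem to the diagonal case using the unitary decomposition of Theorem \ref{HUDec}, then exploit the total order on $\mathbb{D}$ together with the nonnegative-product closure in Proposition \ref{P6.5}(a) to verify the Rayleigh-type inequality. I focus on \eqref{MaxMinLam-1}; the argument for \eqref{MaxMinLam-2} is symmetric.

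By Theorem \ref{HUDec} write $A = U\Sigma U^*$, where $U \in \mathbb{DQ}^{m\times m}$ is unitary and $\Sigma = {\rm diag}(\lambda^{(1)},\lambda^{(2)},\ldots,\lambda^{(m)})$ (after permuting the ordering in \eqref{Sigmn} so the dual-number eigenvalues appear in decreasing order with respect to the total order of Section \ref{Du-number}). Given ${\bf x} \in \mathbb{DQ}^m \setminus [{\bf 0}]$, set ${\bf y} := U^*{\bf x}$. Because $U$ is unitary, $U_{\rm st}$ is a quaternion unitary matrix, hence nonsingular; so ${\bf y}_{\rm st} = U_{\rm st}^*{\bf x}_{\rm st} \neq {\bf 0}$, i.e.\ ${\bf y}$ is still appreciable. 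Moreover $\|{\bf y}\|^2 = {\bf y}^*{\bf y} = {\bf x}^*UU^*{\bf x} = \|{\bf x}\|^2$, and since the $\lambda^{(i)}$ are dual numbers (central in $\mathbb{DQ}$),
\begin{equation*}
{\bf x}^*A{\bf x} = {\bf y}^*\Sigma {\bf y} = \sum_{i=1}^{m} \lambda^{(i)} |y_i|^2.
\end{equation*}

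Next I would show the key inequality $\sum_{i=1}^m \lambda^{(i)}|y_i|^2 \le \lambda^{(1)}\sum_{i=1}^m |y_i|^2$ in the dual-number order. Rearranging, this is equivalent to
\begin{equation*}
\sum_{i=1}^{m} \bigl(\lambda^{(1)} - \lambda^{(i)}\bigr)|y_i|^2 \in \mathbb{D}_+.
\end{equation*}
For each $i$, the factor $\lambda^{(1)} - \lambda^{(i)} \in \mathbb{D}_+$ by the assumed ordering, and $|y_i|^2 \in \mathbb{D}_+$ since the magnitude of a dual quaternion is a nonnegative dual number. Proposition \ref{P6.5}(a) then gives $(\lambda^{(1)} - \lambda^{(i)})|y_i|^2 \in \mathbb{D}_+$, and a sum of nonnegative dual numbers is nonnegative.

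Since ${\bf x}$ is appreciable, $\|{\bf x}\|^2$ has nonzero standard part $\|{\bf x}_{\rm st}\|^2 > 0$, so it is appreciable and positive, hence invertible with appreciable positive inverse $\|{\bf x}\|^{-2}$. Multiplying the above inequality by $\|{\bf x}\|^{-2}$ (and again invoking Proposition \ref{P6.5}(a) to preserve the order) yields
\begin{equation*}
\|{\bf x}\|^{-2}({\bf x}^*A{\bf x}) \le \lambda^{(1)} \quad \text{for all } {\bf x} \in \mathbb{DQ}^m\setminus[{\bf 0}].
\end{equation*}
Finally, to see the supremum is attained, take ${\bf x}$ to be the first column of $U$; then ${\bf y} = {\bf e}_1$, $\|{\bf x}\|=1$, and ${\bf x}^*A{\bf x} = \lambda^{(1)}$. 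This establishes \eqref{MaxMinLam-1}, and replacing $\lambda^{(1)}$ by $\lambda^{(m)}$ and reversing the inequality (using $\lambda^{(i)} - \lambda^{(m)} \in \mathbb{D}_+$) proves \eqref{MaxMinLam-2}.

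The only delicate point is the bookkeeping with the dual-number total order: one must check that each step (nonnegativity of $|y_i|^2$, nonnegativity of $\lambda^{(1)}-\lambda^{(i)}$, closure under products and sums in $\mathbb{D}_+$, and preservation of order under multiplication by the appreciable positive scalar $\|{\bf x}\|^{-2}$) is legitimate. Proposition \ref{P6.5}(a) and the invertibility criterion for appreciable dual numbers supply everything needed, so no novel inequality is required beyond what has already been assembled in the excerpt.
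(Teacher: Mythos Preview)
Your proposal is correct and follows essentially the same route as the paper: diagonalize via Theorem~\ref{HUDec}, substitute ${\bf y}=U^*{\bf x}$, bound $\sum_i \lambda^{(i)}|y_i|^2$ by $\lambda^{(1)}\sum_i |y_i|^2$ using Proposition~\ref{P6.5}(a), and attain the bound at the first column of $U$. If anything, you are slightly more explicit than the paper in justifying that ${\bf y}$ remains appreciable and that multiplication by the appreciable positive scalar $\|{\bf x}\|^{-2}$ preserves the order, but these are refinements of the same argument rather than a different approach.
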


\begin{proof}
Since $A$ is Hermitian, we know that ${\bf x}^*A{\bf x}\in \mathbb{D}$ for any ${\bf x}\in \mathbb{DQ}^m$. Moreover, by Theorem \ref{HUDec}, there exists a unitary matrix $U=[{\bf u}_1,{\bf u}_2,\ldots,{\bf u}_m]$ such that $A=U{\rm diag}(\lambda^{(1)},\lambda^{(2)},\ldots,\lambda^{(m)})U^*$. It is obvious that $\lambda^{(1)}=\lambda_1+\lambda_{1,1}\epsilon$ and $\lambda^{(m)}=\lambda_r+\lambda_{r,k_r}\epsilon$. For any ${\bf x}\in \mathbb{DQ}^m\backslash[{\bf 0}]$, it holds that
\begin{equation}\label{xAy}
{\bf x}^*A{\bf x}={\bf x}^*U{\rm diag}(\lambda^{(1)},\lambda^{(2)},\ldots,\lambda^{(m)})U^*{\bf x}=\sum_{i=1}^m\lambda^{(i)}\bar y_iy_i,
\end{equation}
where ${\bf y}:=(y_1,y_2,\ldots,y_m)^\top=U^*{\bf x}$. Since $U$ is unitary, it is obvious that
$$\|{\bf y}\|^2=\langle{\bf y},{\bf y}\rangle={\bf x}^*UU^*{\bf x}=\langle{\bf x},{\bf x}\rangle=\|{\bf x}\|^2.
$$ Since $\bar y_iy_i\geq 0$ and $\lambda^{(1)}\geq \lambda^{(i)}$ for $i=1,2,\ldots,m$, by (\ref{xAy}) and Proposition \ref{P6.5} (a), we have
$$
{\bf x}^*A{\bf x}=\sum_{i=1}^m\lambda^{(i)}\bar y_iy_i\leq\lambda^{(1)}\sum_{i=1}^m\bar y_iy_i=\lambda^{(1)}\|{\bf x}\|^2,
$$ which implies that $\|{\bf x}\|^{-2}({\bf x}^*A{\bf x})\leq\lambda^{(1)}$ for any ${\bf x}\in \mathbb{DQ}^m\backslash[{\bf 0}]$. On the other hand, by taking $\tilde {\bf x}={\bf u}_1$, we know that $\tilde{\bf x}^*A\bar {\bf x}=\lambda^{(1)}$ and $\|\tilde{\bf x}\|=1$.
Therefore, (\ref{MaxMinLam-1}) holds. The proof of (\ref{MaxMinLam-2}) is similar.
\end{proof}


\begin{Lem}\label{Lemma2}
Let $A=A_{\rm st}+A_{\rm I}\epsilon\in \mathbb{DQ}^{m\times m}$ be an Hermitian matrix, and $\lambda^{(1)}\geq\lambda^{(2)}\geq\ldots\geq\lambda^{(m)}$ be right eigenvalues of $A$. 
Then we have
\begin{equation}\label{Lambda_k}
\lambda^{(k)}=\max\left\{\|{\bf x}\|^{-2}({\bf x}^*A{\bf x})~|~{\bf x}\in \mathbb{DQ}^m\backslash[{\bf 0}],~\langle{\bf u}_j,{\bf x}\rangle=0,~ j=1,2,\ldots,k-1\right\}
\end{equation}
for $k=2,3,\ldots,m$, and
\begin{equation}\label{Lambda^{(m-k)}}\lambda^{(m-k)}=\min\left\{\|{\bf x}\|^{-2}({\bf x}^*A{\bf x})~|~{\bf x}\in \mathbb{DQ}^m\backslash[{\bf 0}],~\langle{\bf u}_j,{\bf x}\rangle=0,~ j=k,k+1,\ldots,m\right\}
\end{equation}
for $k=1,2,\ldots,m-1$, where ${\bf u}_j$ denotes the $j$-th column vector of the unitary matrix $U$ in Theorem \ref{HUDec}.
\end{Lem}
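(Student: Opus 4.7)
The plan is to reduce this to the argument used for Lemma \ref{Lemma1} by exploiting the orthonormal basis $\{{\bf u}_1,\ldots,{\bf u}_m\}$ arising from the unitary decomposition in Theorem \ref{HUDec}. Since $U$ is unitary, for any ${\bf x}\in \mathbb{DQ}^m$ the vector ${\bf y}=U^*{\bf x}=(y_1,\ldots,y_m)^\top$ satisfies $y_j=\langle {\bf u}_j,{\bf x}\rangle$ and $\|{\bf y}\|^2=\|{\bf x}\|^2$. Consequently, the orthogonality constraint $\langle {\bf u}_j,{\bf x}\rangle=0$ for $j=1,\ldots,k-1$ is exactly $y_1=y_2=\cdots=y_{k-1}=0$, and in the identity
$$
{\bf x}^*A{\bf x}=\sum_{i=1}^m\lambda^{(i)}\bar y_iy_i
$$
already derived in the proof of Lemma \ref{Lemma1}, only the terms with $i\geq k$ survive.

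First I would verify that the objective $\|{\bf x}\|^{-2}({\bf x}^*A{\bf x})$ is well-defined. Because ${\bf x}\not\in [{\bf 0}]$, the standard part ${\bf x}_{\rm st}\neq {\bf 0}$, so by Proposition \ref{p6.3}, $\|{\bf x}\|$ is an appreciable positive dual number, hence $\|{\bf x}\|^{-2}\in \mathbb{D}_{++}$ and is appreciable. Next I would establish the upper bound: writing
$$
\lambda^{(k)}\|{\bf x}\|^2-{\bf x}^*A{\bf x}=\sum_{i=k}^m\bigl(\lambda^{(k)}-\lambda^{(i)}\bigr)\bar y_iy_i,
$$
each factor $\lambda^{(k)}-\lambda^{(i)}\in \mathbb{D}_+$ for $i\geq k$ by the ordering of the eigenvalues, and $\bar y_iy_i\in \mathbb{D}_+$; thus Proposition \ref{P6.5}(a) implies each summand, and therefore the whole sum, lies in $\mathbb{D}_+$. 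Multiplying on the left by the appreciable positive dual number $\|{\bf x}\|^{-2}$ and invoking Proposition \ref{P6.5}(b) again yields $\|{\bf x}\|^{-2}({\bf x}^*A{\bf x})\leq \lambda^{(k)}$.

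To see the maximum is attained, take $\tilde{\bf x}={\bf u}_k$. Since $U$ is unitary, $\langle {\bf u}_j,{\bf u}_k\rangle=0$ for $j\neq k$, so $\tilde{\bf x}$ satisfies all the orthogonality constraints, and $\|\tilde{\bf x}\|=1$ so $\tilde{\bf x}$ is appreciable. A direct computation gives $\tilde{\bf x}^*A\tilde{\bf x}={\bf e}_k^*\Sigma {\bf e}_k=\lambda^{(k)}$, where ${\bf e}_k$ is the $k$-th standard basis vector, so equality is achieved, completing the proof of (\ref{Lambda_k}). The proof of (\ref{Lambda^{(m-k)}}) is entirely analogous: for ${\bf x}$ orthogonal to ${\bf u}_k,\ldots,{\bf u}_m$ we have $y_k=\cdots=y_m=0$, and one obtains ${\bf x}^*A{\bf x}-\lambda^{(m-k)}\|{\bf x}\|^2=\sum_{i=1}^{m-k}(\lambda^{(i)}-\lambda^{(m-k)})\bar y_iy_i\in \mathbb{D}_+$, with equality attained at ${\bf x}={\bf u}_{m-k}$.

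The main subtlety — and the place requiring care rather than real difficulty — is manipulating the chain of inequalities in the total order on $\mathbb{D}$: one must consistently use Proposition \ref{P6.5}(a)(b) to keep sums and products of nonnegative dual numbers nonnegative, and rely on the appreciability of $\|{\bf x}\|^2$ to legitimately pass from $\lambda^{(k)}\|{\bf x}\|^2-{\bf x}^*A{\bf x}\in \mathbb{D}_+$ to $\lambda^{(k)}-\|{\bf x}\|^{-2}({\bf x}^*A{\bf x})\in \mathbb{D}_+$. No infinitesimal pathology arises because the constraint ${\bf x}\not\in [{\bf 0}]$ was built into the statement precisely to guarantee invertibility of $\|{\bf x}\|^2$.
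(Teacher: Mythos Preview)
Your proposal is correct and follows essentially the same route as the paper: both proofs pass to coordinates ${\bf y}=U^*{\bf x}$, observe that the orthogonality constraints kill the first $k-1$ components so that ${\bf x}^*A{\bf x}=\sum_{i\geq k}\lambda^{(i)}\bar y_iy_i\leq \lambda^{(k)}\|{\bf x}\|^2$, and then exhibit ${\bf u}_k$ as the maximizer. One small slip: with the paper's convention $\langle{\bf u},{\bf v}\rangle={\bf v}^*{\bf u}$ you have $y_j={\bf u}_j^*{\bf x}=\langle{\bf x},{\bf u}_j\rangle=\overline{\langle{\bf u}_j,{\bf x}\rangle}$ rather than $y_j=\langle{\bf u}_j,{\bf x}\rangle$, but since a dual quaternion vanishes iff its conjugate does, this does not affect the argument.
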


\begin{proof}
We only prove (\ref{Lambda_k}). The expression (\ref{Lambda^{(m-k)}}) can be proved similarly. For every $k=2,3,\ldots,m$, denote $\mathbb{F}_k=\{{\bf x}\in \mathbb{DQ}^m\backslash[{\bf 0}],~\langle{\bf u}_j,{\bf x}\rangle=0, ~j=1,2,\ldots,k-1\}$, $U_1=[{\bf u}_{1},\ldots,{\bf u}_{k-1}]\in \mathbb{DQ}^{m\times (k-1)}$ and $U_2=[{\bf u}_{k},\ldots,{\bf u}_m]\in \mathbb{DQ}^{m\times (m-k+1)}$. It is obvious that $U=[U_1,U_2]$, and $U_2^*U=[O_{(m-k+1)\times (k-1)},I_{m-k+1}]$ since $U_2^*U_1=O_{(m-k+1)\times (k-1)}$ and $U_2^*U_2=I_{m-k+1}$. Since $\{{\bf u}_{1},{\bf u}_{2},\ldots,{\bf u}_m\}$ is a basis of $\mathbb{DQ}^m$, it is easy to see that for any ${\bf x}\in \mathbb{F}_k$, there exists ${\bf y}=(y_k, y_{k+1},\ldots,y_m)^\top\in \mathbb{DQ}^{m-k+1}$, such that ${\bf x}=U_2{\bf y}$. Consequently, we have
$${\bf x}^*A{\bf x}={\bf y}^*U_2^*U{\rm diag}(\lambda^{(1)},\lambda^{(2)},\ldots,\lambda^{(m)})U^*U_2{\bf y}=\sum_{i=k}^m\lambda_i\bar y_iy_i\leq \lambda_k{\bf y}^*{\bf y},$$
which implies, together with the fact $\|{\bf x}\|^2=\langle{\bf x},{\bf x}\rangle={\bf y}^*U_2^*U_2{\bf y}=\langle{\bf y},{\bf y}\rangle=\|{\bf y}\|^2$ from $U_2^*U_2=I_{m-k+1}$, that $\|{\bf x}\|^{-2}({\bf x}^*A{\bf x})\leq \lambda_k$ for any ${\bf x}\in {\mathbb{F}}_k$. Moreover, by taking $\tilde{\bf x}={\bf u}_k\in \mathbb{F}_k$, we $\tilde{\bf x}^*A\tilde{\bf x}=\lambda_k$. Hence, we claim that (\ref{Lambda_k}) holds, and complete the proof.
\end{proof}

We now present one of the main results in this section, which characterizes the minimax principle for eigenvalues of dual quaternion  Hermitian matrices.

\begin{Thm}\label{Lemma3}
Let $A\in \mathbb{DQ}^{m\times m}$ be Hermitian, and $\lambda^{(1)}\geq\lambda^{(2)}\geq\ldots\geq\lambda^{(m)}$ be eigenvalues of $A$. Then for $k=2,3,\ldots,m$, we have
\begin{equation}\label{MLambda_k}
\lambda^{(k)}=\min_{B\in \mathbb{DQ}^{m\times (k-1)}}\max_{{\bf x}\in N(B^*)\backslash[{\bf 0}]}\|{\bf x}\|^{-2}({\bf x}^*A{\bf x}),
\end{equation}
and it attains $\lambda^{(k)}$ when $B=[{\bf u}_1,{\bf u}_2,\ldots,{\bf u}_{k-1}]$;
and for $k=1,2,\ldots,m-1$, we have
\begin{equation}\label{MLambda_{n-k}}\lambda^{(m-k)}=\max_{C\in \mathbb{DQ}^{m\times k}}\min_{{\bf x}\in N(C^*)\backslash[{\bf 0}]}\|{\bf x}\|^{-2}({\bf x}^*A{\bf x}),
\end{equation}
and it attains the $\lambda^{(m-k)}$ when $C=[{\bf u}_{m-k+1},{\bf u}_{m-k+2},\ldots,{\bf u}_m]$, where ${\bf u}_i$ is the $i$th column of unitary matrix $U$ in  Theorem \ref{HUDec}. Here, for given $W\in \mathbb{DQ}^{p\times q}$, $N(W):=\{{\bf z}\in \mathbb{DQ}^{q}~|~W{\bf z}={\bf 0}\}$. 
\end{Thm}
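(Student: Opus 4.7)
The plan is to prove the min--max identity (\ref{MLambda_k}) directly and then deduce the max--min identity (\ref{MLambda_{n-k}}) by applying (\ref{MLambda_k}) to $-A$, whose eigenvalues in decreasing order are $-\lambda^{(m)}\ge\cdots\ge -\lambda^{(1)}$. For the ``$\le$'' direction in (\ref{MLambda_k}) I would simply plug in the explicit choice $B^{\circ}:=[\mathbf{u}_1,\ldots,\mathbf{u}_{k-1}]$: then $N((B^{\circ})^*) = \{\mathbf{x}\in\mathbb{DQ}^m : \langle \mathbf{u}_j,\mathbf{x}\rangle = 0,\ j=1,\ldots,k-1\}$ is exactly the constraint set appearing in Lemma~\ref{Lemma2}, so (\ref{Lambda_k}) gives $\max_{\mathbf{x}\in N((B^{\circ})^*)\setminus[\mathbf{0}]}\|\mathbf{x}\|^{-2}(\mathbf{x}^*A\mathbf{x})=\lambda^{(k)}$, showing that the outer minimum is at most $\lambda^{(k)}$ and is attained at $B^{\circ}$.

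For the reverse inequality I would show that for every $B\in\mathbb{DQ}^{m\times(k-1)}$ there exists an appreciable $\mathbf{x}\in N(B^*)$ with $\mathbf{x}^*A\mathbf{x}\ge \lambda^{(k)}\|\mathbf{x}\|^2$. The candidate is $\mathbf{x}=U_k\boldsymbol{\alpha}$, where $U_k:=[\mathbf{u}_1,\ldots,\mathbf{u}_k]$ and $\boldsymbol{\alpha}\in\mathbb{DQ}^k$; since $U_k^*U_k=I_k$ and $U_k^*AU_k=\mathrm{diag}(\lambda^{(1)},\ldots,\lambda^{(k)})$, one obtains $\|\mathbf{x}\|^2=\boldsymbol{\alpha}^*\boldsymbol{\alpha}$ and
\[
\mathbf{x}^*A\mathbf{x}-\lambda^{(k)}\|\mathbf{x}\|^2=\sum_{i=1}^{k-1}(\lambda^{(i)}-\lambda^{(k)})\bar\alpha_i\alpha_i\in\mathbb{D}_+
\]
by Proposition~\ref{P6.5}(a), because each factor is a nonnegative dual number. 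Once $\mathbf{x}$ is appreciable, $\|\mathbf{x}\|^2$ is positive and appreciable, hence invertible in $\mathbb{D}$, and multiplying by $\|\mathbf{x}\|^{-2}$ yields $\|\mathbf{x}\|^{-2}(\mathbf{x}^*A\mathbf{x})\ge\lambda^{(k)}$.

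The main obstacle is producing an \emph{appreciable} $\boldsymbol{\alpha}$ satisfying the constraint $M\boldsymbol{\alpha}=\mathbf{0}$, where $M:=B^*U_k\in\mathbb{DQ}^{(k-1)\times k}$; a bare dimension count (Proposition~\ref{DQEQmn}) only yields a nonzero solution, which might be infinitesimal. To handle this I would split $M=M_{\rm st}+M_{\rm I}\epsilon$ and rewrite $M\boldsymbol{\alpha}=\mathbf{0}$ as
\[
M_{\rm st}\boldsymbol{\alpha}_{\rm st}=\mathbf{0},\qquad M_{\rm st}\boldsymbol{\alpha}_{\rm I}=-M_{\rm I}\boldsymbol{\alpha}_{\rm st}.
\]
Set $K:=\ker(M_{\rm st})\subset\mathbb{Q}^k$ and $R:=\mathrm{Range}(M_{\rm st})\subset\mathbb{Q}^{k-1}$; by rank--nullity for the quaternion matrix $M_{\rm st}\in\mathbb{Q}^{(k-1)\times k}$, $\dim_{\mathbb{Q}}K=d\ge 1$ and $\dim_{\mathbb{Q}}(\mathbb{Q}^{k-1}/R)=d-1$. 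The right $\mathbb{Q}$-linear map $\varphi:K\to\mathbb{Q}^{k-1}/R$ defined by $\boldsymbol{\alpha}_{\rm st}\mapsto M_{\rm I}\boldsymbol{\alpha}_{\rm st}+R$ then has $\dim\ker\varphi\ge d-(d-1)=1$, so some nonzero $\boldsymbol{\alpha}_{\rm st}\in K$ satisfies $M_{\rm I}\boldsymbol{\alpha}_{\rm st}\in R$; this makes the second equation solvable for some $\boldsymbol{\alpha}_{\rm I}$. Because $U$ is unitary, $U_{\rm st}$ is a unitary quaternion matrix, so the columns of $(U_k)_{\rm st}$ are right linearly independent and $\boldsymbol{\alpha}_{\rm st}\ne\mathbf{0}$ forces $\mathbf{x}_{\rm st}=(U_k)_{\rm st}\boldsymbol{\alpha}_{\rm st}\ne\mathbf{0}$, i.e.\ $\mathbf{x}$ is appreciable.

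Combining both directions yields (\ref{MLambda_k}), attained at $B^{\circ}$. Finally, (\ref{MLambda_{n-k}}) follows from (\ref{MLambda_k}) applied to $-A$ at index $k+1$: the minimum over $B'\in\mathbb{DQ}^{m\times k}$ of the inner maximum of $\|\mathbf{x}\|^{-2}(\mathbf{x}^*(-A)\mathbf{x})$ equals $-\lambda^{(m-k)}$, attained at $B'=[\mathbf{u}_m,\mathbf{u}_{m-1},\ldots,\mathbf{u}_{m-k+1}]$; negating converts this to a maximum of a minimum of $\|\mathbf{x}\|^{-2}(\mathbf{x}^*A\mathbf{x})$, and reordering the columns (which does not alter $N(C^*)$) gives the optimal $C=[\mathbf{u}_{m-k+1},\ldots,\mathbf{u}_m]$ claimed in the statement.
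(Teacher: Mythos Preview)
Your proof is correct and follows the same route as the paper: diagonalize $A$ via Theorem~\ref{HUDec}, obtain the lower bound $\lambda^{(k)}$ on the inner maximum by restricting to vectors in the span of $\mathbf{u}_1,\ldots,\mathbf{u}_k$, and verify attainment at $B=[\mathbf{u}_1,\ldots,\mathbf{u}_{k-1}]$ via Lemma~\ref{Lemma2}; your derivation of (\ref{MLambda_{n-k}}) by applying (\ref{MLambda_k}) to $-A$ is equivalent to the paper's ``similarly''. Your rank--nullity argument producing an \emph{appreciable} $\boldsymbol{\alpha}$ with $B^*U_k\boldsymbol{\alpha}=\mathbf{0}$ is in fact a refinement of the paper's argument: the paper restricts to vectors of the form $(\mathbf{y}_1,\mathbf{0})^\top\in N(D^*)\setminus[\mathbf{0}]$ and takes max $\ge$ min over that set without verifying it is nonempty, whereas you supply exactly this missing justification.
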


\begin{proof}
We only prove the first conclusion. The second one can be proved similarly. Since $A$ is Hermitian, there exists a unitary matrix $U$ such that  $A=U{\rm diag}(\lambda^{(1)},\lambda^{(2)},\ldots,\lambda^{(m)})U^*$ holds. 
For any $B\in \mathbb{DQ}^{m\times (k-1)}$, denote $D=U^*B$. It is clear that $D^*=B^*U$ and $B^*=D^*U^*$, which implies that ${\bf x}\in N(B^*)$ if and only if ${\bf y}:=U^*{\bf x}\in N(D^*)$. Consequently, since $\|{\bf x}\|=\|{\bf y}\|$, for $k=2,3,\ldots,m$, we have
$$
\begin{array}{l}
\displaystyle\max_{{\bf x}\in N(B^*)\backslash[{\bf 0}]}\|{\bf x}\|^{-2}({\bf x}^*A{\bf x})\\
=\displaystyle\max_{{\bf y}\in N(B^*)\backslash[{\bf 0}]}\|{\bf y}\|^{-2}({\bf y}^*{\rm diag}(\lambda^{(1)},\lambda^{(2)},\ldots,\lambda^{(m)}){\bf y})\\
\geq\displaystyle\max_{({\bf y}_1,{\bf 0})^\top\in N(B^*)\backslash[{\bf 0}],{\bf y}_1\in \mathbb{DQ}^k}\|({\bf y}_1,{\bf 0})^\top\|^{-2}(({\bf y}_1^*,{\bf 0}^\top){\rm diag}(\lambda^{(1)},\lambda^{(2)},\ldots,\lambda^{(m)})({\bf y}_1^\top,{\bf 0}^\top)^\top)\\
=\displaystyle\max_{({\bf y}_1,{\bf 0})^\top\in N(B^*)\backslash[{\bf 0}],{\bf y}_1\in \mathbb{DQ}^k}\|{\bf y}_1\|^{-2}({\bf y}_1^*{\rm diag}(\lambda^{(1)},\lambda^{(2)},\ldots,\lambda^{(k)}){\bf y}_1)\\
\geq\displaystyle\min_{({\bf y}_1,{\bf 0})^\top\in N(B^*)\backslash[{\bf 0}],{\bf y}_1\in \mathbb{DQ}^k}\|{\bf y}_1\|^{-2}({\bf y}_1^*{\rm diag}(\lambda^{(1)},\lambda^{(2)},\ldots,\lambda^{(k)}){\bf y}_1)\\
\geq\displaystyle\min_{{\bf y}_1\in \mathbb{DQ}^k\backslash[{\bf 0}]}\|{\bf y}_1\|^{-2}({\bf y}_1^*{\rm diag}(\lambda^{(1)},\lambda^{(2)},\ldots,\lambda^{(k)}){\bf y}_1)\\
=\lambda^{(k)},
\end{array}
$$
which implies
$$
\max_{{\bf x}\in N(B^*)\backslash[{\bf 0}]}\|{\bf x}\|^{-2}({\bf x}^*A{\bf x})\geq \lambda^{(k)}~~~~~\forall~B\in \mathbb{DQ}^{m\times (k-1)}.
$$
Moreover, when $B=[{\bf u}_1,{\bf u}_2,\ldots,{\bf u}_{k-1}]$, it is easy to verify that
$$
\max_{{\bf x}\in N(B^*)\backslash[{\bf 0}]}\|{\bf x}\|^{-2}({\bf x}^*A{\bf x})= \lambda^{(k)}.
$$
Therefore, we obtain the desired result and complete the proof.
\end{proof}

Theorem  \ref{Lemma3} considers eigenvalues (including appreciable and infinitesimal parts) of a dual quaternion matrix as a whole to establish their relationship with related optimization models.  However, in many applications, we need consider separately the appreciable parts of eigenvalues of a dual quaternion matrix. To this end, we first recall the following proposition.

\begin{Prop} \cite{QL21}\label{Prop1} Suppose that $\lambda=\lambda_{{\rm st}}+\lambda_{\rm I}\epsilon\in \mathbb{DQ}$ is a right eigenvalue of $A=A_{{\rm st}}+A_{\rm I}\epsilon\in \mathbb{DQ}^{m\times m}$, with associated right eigenvector ${\bf x}={\bf x}_{\rm st} + {\bf x}_{{\rm I}}\epsilon\in \mathbb{DQ}^m$. Then
\begin{equation}\label{Reigenva}
\lambda=\frac{{\bf x}^*A{\bf x}}{\|{\bf x}\|^2}~~~~{\rm and}~~~~\lambda_{\rm st}=\frac{{\bf x}_{\rm st}^*A_{\rm st}{\bf x}_{\rm st}}{\|{\bf x}_{\rm st}\|^2}.
\end{equation}
Moreover, if $A$ is Hermitian, then
we have
\begin{equation}\label{Eigvalue-Vet-DQM}
\lambda_{\rm I}=\frac{{\bf x}_{\rm st}^*A_{\rm I}{\bf x}_{\rm st}}{\|{\bf x}_{\rm st}\|^2}.
\end{equation}
A dual quaternion Hermitian matrix has exactly $m$ dual number eigenvalues and no other right eigenvalues.
\end{Prop}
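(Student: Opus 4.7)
The plan is to derive all three Rayleigh-quotient-style formulas by left-multiplying the eigenvalue equation $A\mathbf{x}=\mathbf{x}\lambda$ (or its $\epsilon$-expansion) by an appropriate conjugate transpose, then exploiting the fact that the norm factors $\|\mathbf{x}\|^2$ and $\|\mathbf{x}_{\rm st}\|^2$ are scalars in $\mathbb D$ and $\mathbb R$ respectively and therefore commute with everything.

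First I would multiply $A\mathbf{x}=\mathbf{x}\lambda$ on the left by $\mathbf{x}^*$ to obtain $\mathbf{x}^*A\mathbf{x}=\|\mathbf{x}\|^2\lambda$; since $\mathbf{x}$ is appreciable, $\|\mathbf{x}\|^2$ is invertible in $\mathbb D$, which yields the first formula. Expanding the same equation in powers of $\epsilon$ and matching standard parts gives the quaternion identity $A_{\rm st}\mathbf{x}_{\rm st}=\mathbf{x}_{\rm st}\lambda_{\rm st}$, and the same trick with $\mathbf{x}_{\rm st}^*$ produces the second formula after dividing by the positive real number $\|\mathbf{x}_{\rm st}\|^2$.

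For the third formula I would assume $A$ is Hermitian and invoke the fact (used in the statement itself) that then $\lambda\in\mathbb D$, so both $\lambda_{\rm st}$ and $\lambda_{\rm I}$ are real. Equating infinitesimal parts of $A\mathbf{x}=\mathbf{x}\lambda$ gives
\[
A_{\rm st}\mathbf{x}_{\rm I}+A_{\rm I}\mathbf{x}_{\rm st} \;=\; \mathbf{x}_{\rm st}\lambda_{\rm I}+\mathbf{x}_{\rm I}\lambda_{\rm st}.
\]
Left-multiplying by $\mathbf{x}_{\rm st}^*$ and using $A_{\rm st}^*=A_{\rm st}$ together with the standard-part eigenvalue equation lets me rewrite $\mathbf{x}_{\rm st}^*A_{\rm st}\mathbf{x}_{\rm I}=(A_{\rm st}\mathbf{x}_{\rm st})^*\mathbf{x}_{\rm I}=\lambda_{\rm st}\mathbf{x}_{\rm st}^*\mathbf{x}_{\rm I}$; this cancels the mixed term $\mathbf{x}_{\rm st}^*\mathbf{x}_{\rm I}\lambda_{\rm st}$ on the right (reality of $\lambda_{\rm st}$ lets it commute past the quaternion inner product), leaving $\mathbf{x}_{\rm st}^*A_{\rm I}\mathbf{x}_{\rm st}=\|\mathbf{x}_{\rm st}\|^2\lambda_{\rm I}$.

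For the final assertion, I would read off existence of exactly $m$ dual-number eigenvalues with their multiplicities directly from Theorem \ref{HUDec}. To rule out any right eigenvalue lying outside $\mathbb D$, the standard-part equation forces $\lambda_{\rm st}$ to be a right eigenvalue of the Hermitian quaternion matrix $A_{\rm st}$, hence real; Hermiticity of $A_{\rm I}$ then makes $\mathbf{x}_{\rm st}^*A_{\rm I}\mathbf{x}_{\rm st}$ real, so $\lambda_{\rm I}\in\mathbb R$ by the formula just proved. The one subtle step is the cancellation of the two mixed terms in the third derivation: it hinges on $\lambda_{\rm st}$ being real, and this is precisely where the Hermitian hypothesis is essential.
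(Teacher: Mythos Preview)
The paper does not supply its own proof of this proposition; it is quoted from \cite{QL21} and stated without argument. Your derivations of the three Rayleigh-quotient formulas are correct and are exactly the standard approach: left-multiply by the appropriate conjugate transpose, use that $\|\mathbf x\|^2\in\mathbb D$ and $\|\mathbf x_{\rm st}\|^2\in\mathbb R$ commute, and for (\ref{Eigvalue-Vet-DQM}) exploit $A_{\rm st}^*=A_{\rm st}$ and the reality of $\lambda_{\rm st}$ to cancel the mixed terms. One cosmetic point: in your third derivation you first \emph{assume} $\lambda\in\mathbb D$ to get the formula and only afterwards argue that $\lambda\in\mathbb D$; it is cleaner to note at the outset that the cancellation uses only $\lambda_{\rm st}\in\mathbb R$, which follows directly from $A_{\rm st}$ being Hermitian, so no circularity arises.

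There is, however, a genuine logical issue with your treatment of the last sentence. You propose to read off the existence of exactly $m$ dual-number eigenvalues from Theorem~\ref{HUDec}. In the source \cite{QL21} the order of dependence is the opposite: Proposition~\ref{Prop1} (in particular the statement that a Hermitian dual quaternion matrix has $m$ dual-number eigenvalues) is established first and is then used to build the unitary diagonalization of Theorem~\ref{HUDec}. Invoking Theorem~\ref{HUDec} here is therefore circular relative to the original development. A self-contained proof of the eigenvalue count proceeds instead by first diagonalizing $A_{\rm st}$ unitarily over $\mathbb Q$ and then, within each eigenspace of $A_{\rm st}$, solving for the infinitesimal correction $\lambda_{\rm I}$ and $\mathbf x_{\rm I}$ from the $\epsilon$-part equation $A_{\rm st}\mathbf x_{\rm I}+A_{\rm I}\mathbf x_{\rm st}=\lambda_{\rm st}\mathbf x_{\rm I}+\lambda_{\rm I}\mathbf x_{\rm st}$; this produces the $m$ eigenpairs directly and underlies Theorem~\ref{HUDec} rather than following from it.
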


Based upon this proposition, we have the following theorem.

\begin{Thm}
Let $A=A_{\rm st}+A_{\rm I}\epsilon\in \mathbb{DQ}^{m\times m}$. Suppose that $A$ is Hermitian, and $\lambda^{(1)}\geq\lambda^{(2)}\geq\ldots\geq\lambda^{(m)}$ are eigenvalues of $A$. Then it holds that
\begin{equation}\label{StLamk}
\lambda_{\rm st}^{(1)}=\max_{{\bf x}\in \mathbb{Q}^m\backslash\{{\bf 0}\}}\frac{{\bf x}^*A_{\rm st}{\bf x}}{\|{\bf x}\|^2}~~~{\rm and}~~~\lambda_{\rm st}^{(k)}=\min_{B\in \mathbb{Q}^{m\times (k-1)}}\max_{{\bf x}\in N(B^*)\backslash\{{\bf 0}\}}\frac{{\bf x}^*A_{\rm st}{\bf x}}{\|{\bf x}\|^2},~~k=2,3,\ldots,m,
\end{equation}
and
\begin{equation}\label{LamMaxMin}
\lambda_{\rm min}(A_{\rm I})\leq\lambda_{\rm I}^{(k)}\leq \lambda_{\rm max}(A_{\rm I}), ~~k=1,2,\ldots,m,
\end{equation}
where $\lambda_{\rm st}^{(k)}$ and $\lambda_{\rm I}^{(k)}$ are the standard part and the dual part of $\lambda^{(k)}$ respectively. Here, $\lambda_{\rm max}(\cdot)$ and $\lambda_{\rm min}(\cdot)$ denote the largest and smallest eigenvalues of matrices respectively.  
\end{Thm}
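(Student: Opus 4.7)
The plan is to handle the two assertions in \eqref{StLamk} and \eqref{LamMaxMin} separately, because they concern the standard and infinitesimal components of the eigenvalues respectively, and they admit quite different proofs.

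For \eqref{StLamk}, the key observation is that by Theorem \ref{HUDec} the standard parts $\lambda_{\rm st}^{(1)}\geq\lambda_{\rm st}^{(2)}\geq\ldots\geq\lambda_{\rm st}^{(m)}$ are precisely the eigenvalues of the quaternion Hermitian matrix $A_{\rm st}$, ordered and counted with multiplicity (each distinct $\lambda_i$ in \eqref{Sigmn} appears $k_i$ times). Hence \eqref{StLamk} is just the classical Courant--Fischer minimax principle for quaternion Hermitian matrices applied to $A_{\rm st}$. One can either invoke it from \cite{WLZZ18}, or prove it by repeating the arguments of Lemma \ref{Lemma2} and Theorem \ref{Lemma3} verbatim on $A_{\rm st}\in\mathbb{Q}^{m\times m}$: the proofs go through unchanged with $\mathbb{DQ}$ replaced by $\mathbb{Q}$ and $\mathbb{D}$ replaced by $\mathbb{R}$, using a unitary diagonalization of $A_{\rm st}$ in the quaternion sense.

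For \eqref{LamMaxMin}, I would use Proposition \ref{Prop1} directly. Since $A=A^*$, splitting into standard and infinitesimal parts gives $A_{\rm st}^*=A_{\rm st}$ and $A_{\rm I}^*=A_{\rm I}$, so $A_{\rm I}\in\mathbb{Q}^{m\times m}$ is itself a quaternion Hermitian matrix and its real eigenvalues $\lambda_{\min}(A_{\rm I})$, $\lambda_{\max}(A_{\rm I})$ are well-defined. Let ${\bf x}^{(k)}={\bf x}_{\rm st}^{(k)}+{\bf x}_{\rm I}^{(k)}\epsilon$ be an eigenvector associated with $\lambda^{(k)}$. Since ${\bf x}^{(k)}$ is appreciable, ${\bf x}_{\rm st}^{(k)}\neq {\bf 0}$, and \eqref{Eigvalue-Vet-DQM} in Proposition \ref{Prop1} gives
\[
\lambda_{\rm I}^{(k)} \;=\; \frac{({\bf x}_{\rm st}^{(k)})^{*}A_{\rm I}{\bf x}_{\rm st}^{(k)}}{\|{\bf x}_{\rm st}^{(k)}\|^{2}},
\]
which is exactly a Rayleigh quotient of the quaternion Hermitian matrix $A_{\rm I}$ at the nonzero vector ${\bf x}_{\rm st}^{(k)}\in\mathbb{Q}^{m}$. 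The standard Rayleigh-quotient characterization of extremal eigenvalues of quaternion Hermitian matrices (a special case of the first part already established) then yields $\lambda_{\min}(A_{\rm I})\leq\lambda_{\rm I}^{(k)}\leq\lambda_{\max}(A_{\rm I})$, which is \eqref{LamMaxMin}.

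The main subtlety, and the reason I would not try to extract \eqref{LamMaxMin} by reading off the infinitesimal part of \eqref{MLambda_k}, is that the dual-number order couples standard and infinitesimal parts: the outer $\min$ over $B\in\mathbb{DQ}^{m\times(k-1)}$ and the inner $\max$ over ${\bf x}\in N(B^{*})$ are dominated by the standard parts, and peeling off the infinitesimal component of the Rayleigh quotient $\|{\bf x}\|^{-2}({\bf x}^{*}A{\bf x})$ mixes both ${\bf x}_{\rm st}$ and ${\bf x}_{\rm I}$ via Proposition \ref{p6.3}. The cleanness of Proposition \ref{Prop1}, which expresses $\lambda_{\rm I}^{(k)}$ using only ${\bf x}_{\rm st}^{(k)}$ and $A_{\rm I}$, is precisely what circumvents this entanglement and turns \eqref{LamMaxMin} into an immediate consequence of the quaternion Rayleigh-quotient bound.
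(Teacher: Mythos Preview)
Your proposal is correct and follows essentially the same route as the paper: for \eqref{StLamk} the paper also observes (via Proposition~\ref{Prop1}) that the $\lambda_{\rm st}^{(k)}$ are the eigenvalues of the quaternion Hermitian matrix $A_{\rm st}$ and then reruns the arguments of Lemma~\ref{Lemma1} and Theorem~\ref{Lemma3} with $\mathbb{DQ}$ replaced by $\mathbb{Q}$; for \eqref{LamMaxMin} the paper likewise takes an eigenvector ${\bf x}^{(k)}$, invokes \eqref{Eigvalue-Vet-DQM} to write $\lambda_{\rm I}^{(k)}$ as a Rayleigh quotient of $A_{\rm I}$ at ${\bf x}_{\rm st}^{(k)}$, and concludes by the quaternion Rayleigh-quotient bounds. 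Your closing remark on why one should not try to read off \eqref{LamMaxMin} from the infinitesimal part of \eqref{MLambda_k} is a nice piece of commentary that the paper does not include.
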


\begin{proof}
By the definition of eigenvalues of dual quaternion Hermitian matrices and Proposition \ref{Prop1}, we know that $\lambda^{(1)}_{{\rm st}}\geq \lambda^{(2)}_{{\rm st}}\geq\ldots\geq\lambda^{(m)}_{{\rm st}}$ are eigenvalues of the quaternion Hermitian matrix $A_{{\rm st}}$. Consequently, by a similar method used in the proof of Lemma \ref{Lemma1}, but $\mathbb{DQ}^m\backslash[{\bf 0}]$ is replaced by $\mathbb{Q}^m\backslash\{{\bf 0}\}$, we can obtain the first expression in (\ref{StLamk}). By a similar method used in the proof of Theorem \ref{Lemma3}, but $\mathbb{DQ}^{m\times (k-1)}$ is replaced by $\mathbb{Q}^{m\times (k-1)}$, we can obtain the second expression in (\ref{StLamk}). Let ${\bf x}^{(k)}={\bf x}_{\rm st}^{(k)} + {\bf x}_{{\rm I}}^{(k)}\epsilon$ be an eigenvector of $A$, associate with the eigenvalue $\lambda^{(k)}=\lambda^{(k)}_{\rm st}+\lambda^{(k)}_{\rm I}\epsilon$. Denote $\tilde{\bf x}^{(k)}={\bf x}_{\rm st}^{(k)}/\|{\bf x}_{\rm st}^{(k)}\|^2$. It obvious that $\tilde{\bf x}^{(k)}\in \mathbb{Q}^m$ with $\|\tilde{\bf x}^{(k)}\|=1$ and $\lambda_{\rm I}^{(k)}=(\tilde{\bf x}^{(k)})^*A_{\rm I}\tilde{\bf x}^{(k)}$ by Proposition \ref{Prop1}. Then expression (\ref{LamMaxMin}) follows from the quaternion matrix theory, since $A_{\rm I}$ is a quaternion Hermitian matrix.
\end{proof}

We now propose the following proposition, which is a dual quaternion version of Cauchy-Schwarz inequality on $\mathbb{R}^m$.
\begin{Prop}\label{Ch-SW-In} (Cauchy-Schwarz inequality on $\mathbb{DQ}^m$)
For any ${\bf u},{\bf v}\in \mathbb{DQ}^m$, it holds that $$\|{\bf u}\|\|{\bf v}\|-|\langle{\bf u},{\bf v}\rangle|\in \mathbb{D}_+,$$
i.e., $|\langle{\bf u},{\bf v}\rangle|\leq\|{\bf u}\|\|{\bf v}\|$.
\end{Prop}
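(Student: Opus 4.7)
The plan is to reduce to the quaternion Cauchy--Schwarz inequality (Proposition \ref{QCauchy-Inequality}) via a case analysis on the appreciability of $\mathbf{u}$ and $\mathbf{v}$, with a classical projection argument handling the generic case. A preliminary observation I would use throughout is that for any $q\in\mathbb{DQ}$, $\bar q q = |q|^2$ (verifiable directly from (\ref{e7}) together with the quaternion identity ${\rm Re}(\bar a b)={\rm Re}(a\bar b)$), so $\|\mathbf{w}\|^2 = \mathbf{w}^*\mathbf{w}$ for every $\mathbf{w}\in\mathbb{DQ}^m$; this enables the usual inner-product manipulations.

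In the generic case, both $\mathbf{u}$ and $\mathbf{v}$ are appreciable; then $\|\mathbf{v}\|^2$ is positive and appreciable, hence invertible, so $\alpha := \|\mathbf{v}\|^{-2}\langle\mathbf{u},\mathbf{v}\rangle\in\mathbb{DQ}$ is well-defined. Because $\|\mathbf{v}\|^{-2}$ is a real dual number and therefore commutes with every dual quaternion, expanding
\[
\|\mathbf{u}-\mathbf{v}\alpha\|^2 = \|\mathbf{u}\|^2 - \bar\alpha\langle\mathbf{u},\mathbf{v}\rangle - \overline{\langle\mathbf{u},\mathbf{v}\rangle}\alpha + |\alpha|^2\|\mathbf{v}\|^2
\]
collapses to $\|\mathbf{u}\|^2 - \|\mathbf{v}\|^{-2}|\langle\mathbf{u},\mathbf{v}\rangle|^2$, which lies in $\mathbb{D}_+$ by definition of the norm. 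Multiplying by $\|\mathbf{v}\|^2$ yields the squared inequality $\|\mathbf{u}\|^2\|\mathbf{v}\|^2 - |\langle\mathbf{u},\mathbf{v}\rangle|^2\in\mathbb{D}_+$; writing $A=\|\mathbf{u}\|\|\mathbf{v}\|$ and $B=|\langle\mathbf{u},\mathbf{v}\rangle|$, I factor it as $(A-B)(A+B)$. Since $A$ is positive and appreciable, so is $A+B$, and hence $(A+B)^{-1}\in\mathbb{D}_{++}$; Proposition \ref{P6.5}(a) then gives $A-B = [(A-B)(A+B)](A+B)^{-1}\in\mathbb{D}_+$, which is the claim.

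The degenerate cases follow by direct computation: if $\mathbf{u}_{\mathrm{st}}=\mathbf{0}$ and $\mathbf{v}_{\mathrm{st}}\neq\mathbf{0}$, then $\|\mathbf{u}\|\|\mathbf{v}\| = \|\mathbf{u}_{\mathrm{I}}\|\|\mathbf{v}_{\mathrm{st}}\|\epsilon$ and $|\langle\mathbf{u},\mathbf{v}\rangle| = |\mathbf{v}_{\mathrm{st}}^*\mathbf{u}_{\mathrm{I}}|\epsilon$, so the inequality reduces at order $\epsilon$ to $\|\mathbf{u}_{\mathrm{I}}\|\|\mathbf{v}_{\mathrm{st}}\| \geq |\langle\mathbf{u}_{\mathrm{I}},\mathbf{v}_{\mathrm{st}}\rangle|$, which is Proposition \ref{QCauchy-Inequality}; if both vectors are infinitesimal, both $\|\mathbf{u}\|\|\mathbf{v}\|$ and $\langle\mathbf{u},\mathbf{v}\rangle$ vanish and the inequality is $0\geq 0$. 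The main obstacle is the passage from the squared to the unsquared inequality, which is not automatic in the dual-number total order --- for instance $\epsilon^2 = (2\epsilon)^2 = 0$, so $a^2\geq b^2$ does not imply $a\geq b$ for infinitesimal $a,b\in\mathbb{D}_+$. Invertibility of $A+B$ rescues the argument in the fully appreciable case, but this invertibility fails precisely when one of the vectors is infinitesimal, which is why those cases have to be peeled off and handled directly via the quaternion inequality.
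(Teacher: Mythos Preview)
Your proof is correct and follows essentially the same strategy as the paper: a case split on appreciability, with the degenerate cases reduced to Proposition~\ref{QCauchy-Inequality} and the generic case handled by the classical projection argument $\mathbf{u}\mapsto\mathbf{u}-\mathbf{v}\alpha$ with $\alpha=\|\mathbf{v}\|^{-2}\langle\mathbf{u},\mathbf{v}\rangle$.

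The one noteworthy difference is in how you pass from the squared inequality $\|\mathbf{u}\|^2\|\mathbf{v}\|^2-|\langle\mathbf{u},\mathbf{v}\rangle|^2\in\mathbb{D}_+$ to the unsquared one when both vectors are appreciable. The paper splits further into the sub-cases $\langle\mathbf{u}_{\rm st},\mathbf{v}_{\rm st}\rangle=0$ (handled by direct comparison of standard parts) and $\langle\mathbf{u}_{\rm st},\mathbf{v}_{\rm st}\rangle\neq0$ (so that $|\langle\mathbf{u},\mathbf{v}\rangle|$ is appreciable and Proposition~\ref{P6.5}(f) on square-root monotonicity applies). Your factorisation $(A-B)(A+B)$ together with the invertibility of $A+B$ --- which is guaranteed simply because $A=\|\mathbf{u}\|\|\mathbf{v}\|$ is positive and appreciable --- handles both sub-cases at once and is a bit cleaner. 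The paper's route has the small advantage of making explicit why the ``square-root step'' is delicate in $\mathbb{D}$, but yours is equally rigorous and avoids the extra case split.
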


\begin{proof}
Take any ${\bf u}={\bf u}_{\rm st}+{\bf u}_{\rm I}\epsilon, {\bf v}={\bf v}_{\rm st}+{\bf v}_{\rm I}\epsilon\in \mathbb{DQ}^m$. If ${\bf u}$ and ${\bf v}$ are both infinitesimal, i.e., ${\bf u}_{\rm st}={\bf v}_{\rm st}={\bf 0}$, then the conclusion is clear, since $\langle{\bf u},{\bf v}\rangle=0$ in this case.

We first consider the case where one of ${\bf u}$ and ${\bf v}$ is appreciable and another one is infinitesimal. Without loss of generality, we assume that ${\bf u}_{\rm st}\neq{\bf 0}$ and ${\bf v}_{\rm st}={\bf 0}$. In this case, $\langle{\bf u},{\bf v}\rangle=\langle{\bf u}_{\rm st},{\bf v}_{\rm I}\rangle\epsilon$, which implies $|\langle{\bf u},{\bf v}\rangle|=|\langle{\bf u}_{\rm st},{\bf v}_{\rm I}\rangle|\epsilon$ by (\ref{e7}). Since ${\bf u}_{\rm st},{\bf v}_{\rm I}\in \mathbb{Q}^m$, by Proposition \ref{QCauchy-Inequality}, we have $|\langle{\bf u}_{\rm st},{\bf v}_{\rm I}\rangle|\leq \|{\bf u}_{\rm st}\|\|{\bf v}_{\rm I}\|$, so $|\langle{\bf u},{\bf v}\rangle|\leq \|{\bf u}_{\rm st}\|\|{\bf v}_{\rm I}\|\epsilon$. Moreover, by Proposition \ref{p6.3}, we have
$$
\left\{
\begin{array}{l}
\|{\bf u}\|=\|{\bf u}_{\rm st}\|+\displaystyle\frac{\langle\vu_{\rm st},\vu_{\rm I}\rangle+\langle\vu_{\rm I},\vu_{\rm st}\rangle}{2\|\vu_{\rm st}\|_2}\epsilon,\\
\|{\bf v}\|=\|{\bf v}_{\rm I}\|\epsilon,
\end{array}
\right.
$$
which implies that $\|{\bf u}\|\|{\bf v}\|=\|{\bf u}_{\rm st}\|\|{\bf v}_{\rm I}\|\epsilon$. Hence the desired conclusion holds.

Now we consider the case that ${\bf u}$ and ${\bf v}$ are both appreciable, i.e., ${\bf u}_{\rm st}\neq{\bf 0}$ and ${\bf v}_{\rm st}\neq{\bf 0}$. There are two situations: (a) $\langle{\bf u}_{\rm st},{\bf v}_{\rm st}\rangle=0$ and (b) $\langle{\bf u}_{\rm st},{\bf v}_{\rm st}\rangle\neq0$. When (a) occurs, the conclusion is clear, since $|\langle{\bf u},{\bf v}\rangle|=|\langle{\bf u}_{\rm st},{\bf v}_{\rm I}\rangle+\langle{\bf u}_{\rm I},{\bf v}_{\rm st}\rangle|\epsilon$, $\|{\bf u}\|\|{\bf v}\|=\|{\bf u}_{\rm st}\|\|{\bf v}_{\rm st}\|+p\epsilon$ for some $p\in \mathbb{D}$ and $0<\|{\bf u}_{\rm st}\|\|{\bf v}_{\rm st}\|$. When (b) occurs, we carefully follow the classical steps of the proof, keeping the order of terms in multiplication. It is clear that $\langle{\bf u}-{\bf v}\lambda,{\bf u}-{\bf v}\lambda\rangle\in \mathbb{D}_+$ for  any $\lambda\in \mathbb{DQ}$. More specifically, we have
$$
\begin{array}{lll}
\mathbb{D}_+&\ni& \langle{\bf u}-{\bf v}\lambda,{\bf u}-{\bf v}\lambda\rangle\\
&=&\displaystyle\sum_{i=1}^m \bar u_iu_i-\bar\lambda\sum_{i=1}^m \bar v_iu_i-\left(\sum_{i=1}^m \bar u_iv_i\right)\lambda+\bar \lambda\left(\sum_{i=1}^m \bar v_iv_i\right)\lambda\\
&=&\displaystyle\|{\bf u}\|^2-\bar\lambda\sum_{i=1}^m \bar v_iu_i-\left(\sum_{i=1}^m \bar u_iv_i\right)\lambda+\|{\bf v}\|^2\bar\lambda\lambda,
\end{array}
$$
where the last equality comes from the fact that $\|{\bf v}\|^2\in \mathbb{D}$. By taking $\lambda=\|{\bf v}\|^{-2}\sum_{i=1}^m \bar v_iu_i$, we know
$$
\displaystyle\|{\bf u}\|^2-\left(\sum_{i=1}^m\bar u_iv_i\right)\left(\sum_{i=1}^m \bar v_iu_i\right)\|{\bf v}\|^{-2}\in \mathbb{D}_+,
$$
which implies
\begin{equation}\label{ChSWIn}
\|{\bf u}\|^2\|{\bf v}\|^2-\langle{\bf u},{\bf v}\rangle\langle{\bf v},{\bf u}\rangle=\|{\bf u}\|^2\|{\bf v}\|^2-\left(\sum_{i=1}^m \bar u_iv_i\right)\left(\sum_{i=1}^m\bar v_iu_i\right)\in \mathbb{D}_+,
\end{equation}
by Proposition \ref{P6.5} and $\|{\bf v}\|^2\in \mathbb{D}_{++}$. Since $\langle{\bf v},{\bf u}\rangle=\overline{\langle{\bf u},{\bf v}\rangle}$, we know that $\langle{\bf u},{\bf v}\rangle\langle{\bf v},{\bf u}\rangle=|\langle{\bf u},{\bf v}\rangle|^2$. Consequently, (\ref{ChSWIn}) can written as  $\|{\bf u}\|^2\|{\bf v}\|^2-|\langle{\bf u},{\bf v}\rangle|^2\in \mathbb{D}_+$. Since $\langle{\bf u},{\bf v}\rangle$ is appreciable, by Proposition \ref{P6.5} (c)-(f), we obtain the desired result. 
\end{proof}

The following singular value decomposition (SVD) of dual quaternion matrices can be founded in \cite{QL21}.
\begin{Thm}\label{SVD-DQM}\cite{QL21}
For given $A\in \mathbb{DQ}^{m\times n}$, there exists a dual quaternion unitary matrix $U\in \mathbb{DQ}^{m\times m}$ and a dual quaternion unitary matrix $V\in \mathbb{DQ}^{n\times n}$, such that
\begin{equation}\label{SVDDQMEQ}
A=U\left[\begin{array}{cc}\Sigma_t&O\\
O&O
\end{array} \right]_{m\times n}V^*,
\end{equation}
where $\Sigma_t\in \mathbb{D}^{t\times t}$ is a diagonal matrix, taking the form
$\Sigma_t={\rm diag} (\mu_1,\ldots, \mu_r,\ldots,\mu_t)$, $r \leq t\leq {\min}\{m, n\}$, $\mu_1\geq \mu_2\geq\ldots\geq\mu_r$ are positive appreciable dual numbers, and $\mu_{r+1}\geq \mu_{r+2}\geq\ldots\geq\mu_t$ are positive infinitesimal dual numbers. Counting possible multiplicities of the diagonal entries, the form $\Sigma_t$ is unique.
\end{Thm}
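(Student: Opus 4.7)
\medskip
\noindent\textbf{Proof proposal.} The plan is to reduce the singular value decomposition of $A\in\mathbb{DQ}^{m\times n}$ to the unitary diagonalization of the Hermitian matrix $A^*A$ via Theorem \ref{HUDec}, and then to build the left unitary factor column by column from the columns of $A V$.

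First I would set $B:=A^*A\in\mathbb{DQ}^{n\times n}$. Since $B$ is Hermitian, Theorem \ref{HUDec} yields a unitary $V=[\vv_1,\ldots,\vv_n]\in\mathbb{DQ}^{n\times n}$ with $V^*BV=\Lambda$ diagonal. Writing $\vx^*B\vx=\|A\vx\|^2$ and invoking Proposition \ref{p6.3}, one sees that each diagonal entry of $\Lambda$ is a nonnegative dual number. I would then reorder the columns of $V$ so that $\Lambda={\rm diag}(\mu_1^2,\ldots,\mu_r^2,\mu_{r+1}^2,\ldots,\mu_t^2,0,\ldots,0)$ with $\mu_1\ge\cdots\ge\mu_r$ appreciable positive, $\mu_{r+1}\ge\cdots\ge\mu_t$ infinitesimal positive. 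Taking square roots via (\ref{e4}) gives the claimed singular values.

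Second, I would construct the left unitary factor. For the appreciable range $1\le i\le r$, since $\mu_i$ is appreciable and hence invertible (see Section 2.3), define $\vu_i:=A\vv_i\mu_i^{-1}$. A direct computation using $A^*A\vv_i=\vv_i\mu_i^2$, the commutativity of dual numbers with quaternions, and $V^*V=I$ gives $\langle \vu_i,\vu_j\rangle=\mu_j^{-1}\vv_j^*A^*A\vv_i\mu_i^{-1}=\delta_{ij}$, so $\vu_1,\ldots,\vu_r$ are orthonormal. For the zero eigenvalues the corresponding $\vv_i$ satisfy $A\vv_i=0$ (because $\|A\vv_i\|^2=\vv_i^*B\vv_i=0$ via Proposition \ref{p6.3}), so they contribute nothing to the range of $A$. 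It then remains to complete $\{\vu_1,\ldots,\vu_r\}$ to an orthonormal basis of $\mathbb{DQ}^m$ by the dual quaternion analogue of Gram--Schmidt applied to any appreciable completion and then orthonormalising. With $\Sigma_t:={\rm diag}(\mu_1,\ldots,\mu_t)$ and $U:=[\vu_1,\ldots,\vu_m]$, verifying $U\Sigma V^*=A$ reduces to checking it column by column on the basis $\vv_1,\ldots,\vv_n$.

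The main obstacle I anticipate is the infinitesimal block $r+1\le i\le t$, where $\mu_i$ is not invertible, so the formula $\vu_i=A\vv_i\mu_i^{-1}$ is unavailable. Writing $\vv_i=(\vv_i)_{\rm st}+(\vv_i)_{\rm I}\epsilon$ and $\mu_i^2=(\mu_i^2)_{\rm I}\epsilon$, the standard part of $A^*A\vv_i=\vv_i\mu_i^2$ forces $A_{\rm st}^*A_{\rm st}(\vv_i)_{\rm st}=0$, hence $A_{\rm st}(\vv_i)_{\rm st}=0$ and $A\vv_i$ is infinitesimal. To manufacture an orthonormal $\vu_i$ with the correct relation to $\mu_i$, I would carefully perturb a unit appreciable vector orthogonal to $\vu_1,\ldots,\vu_{i-1}$ by an infinitesimal term chosen to match the infinitesimal part of $A\vv_i$; the freedom in choosing a standard-part unit vector in the orthogonal complement is precisely what Proposition \ref{PropAppIN1} and the orthonormal extension results of Section 2.1 provide. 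Uniqueness of $\Sigma_t$ up to multiplicity follows from the uniqueness statement in Theorem \ref{HUDec}, since the $\mu_i^2$ are the eigenvalues of $A^*A$ counted with multiplicity.
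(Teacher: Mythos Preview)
The paper does not prove Theorem~\ref{SVD-DQM}; it quotes the result from \cite{QL21}. So there is no in-paper argument to compare against, and the relevant question is only whether your route would work.

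There is a genuine gap, and it lies exactly where you flagged trouble: the infinitesimal block. Your plan is to read off all singular values as square roots of the eigenvalues of $B=A^*A$, but this cannot produce any infinitesimal $\mu_i$. If $\mu$ is infinitesimal, say $\mu=c\epsilon$ with $c>0$, then $\mu^2=c^2\epsilon^2=0$; conversely, no dual number squares to a nonzero infinitesimal, so the formula (\ref{e4}) you invoke is inapplicable here (it is stated only for positive \emph{appreciable} $q$). In fact one can check directly that $A^*A$ never has a positive infinitesimal eigenvalue: if $A^*A\vv=\lambda\vv$ with $\vv$ appreciable, then $\lambda\|\vv\|^2=(A\vv)^*(A\vv)=\|(A\vv)_{\rm st}\|^2+\big(\langle (A\vv)_{\rm st},(A\vv)_{\rm I}\rangle+\langle (A\vv)_{\rm I},(A\vv)_{\rm st}\rangle\big)\epsilon$, and if $\lambda$ is infinitesimal the standard part forces $(A\vv)_{\rm st}=0$, whence the whole right-hand side vanishes and $\lambda=0$. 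Thus diagonalising $A^*A$ via Theorem~\ref{HUDec} gives you precisely the appreciable $\mu_1,\ldots,\mu_r$ and a zero block; the data $\mu_{r+1},\ldots,\mu_t$ are invisible in $A^*A$.

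To capture the infinitesimal singular values you must work at the level of $A$ itself, not $A^*A$. The information sits in how $A_{\rm I}$ acts between the null space of $A_{\rm st}$ and the cokernel of $A_{\rm st}$: after a preliminary quaternion SVD of $A_{\rm st}$ to isolate the appreciable block, one must perform a second (quaternion) SVD on the relevant corner of $A_{\rm I}$ to extract the $\mu_{r+1},\ldots,\mu_t$ and the corresponding columns of $U$ and $V$. Your perturbation idea in the last paragraph is heading in this direction, but as written it still presumes the $\mu_i$ are already known from $A^*A$, which they are not.
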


\begin{ReK}
In \cite{QL21}, the positive dual numbers $\mu_1, \ldots,\mu_r,\ldots,\mu_t$ in  Theorem \ref{SVD-DQM} are called the nonzero singular values of $A$. Since $A{\bf x}={\bf 0}$ has only a unique zero solution, if and only if $BAC{\bf y}={\bf 0}$ has only  a unique zero solution, for any nonsingular matrices $B\in \mathbb{DQ}^{m\times m}$ and $C\in \mathbb{DQ}^{n\times n}$, we know that, if ${\rm rank}(A)=n$, then all singular values of $A$ are appreciable; Since $A{\bf x}={\bf 0}$ has only infinitesimal solutions, if and only if $BAC{\bf y}={\bf 0}$ has only infinitesimal solutions, for any nonsingular matrices $B\in \mathbb{DQ}^{m\times m}$ and $C\in \mathbb{DQ}^{n\times n}$, we know that, if ${\rm rank}_w(A)=n$, then all singular values of $A$ are nonzero.
\end{ReK}

The following theorem extends a well-known Fan-Hoffman inequality for complex matrices \cite{FH55} to the dual quaternion matrices.

\begin{Thm}
Let $A\in \mathbb{DQ}^{m\times m}$, and $H=(A^*+A)/2$. Let $\sigma_1(A)\geq\sigma_2(A)\geq\ldots\geq\sigma_m(A)$ be the singular values of $A$, and let $\lambda_1(H)\geq\lambda_2(H)\geq\ldots\geq\lambda_m(H)$ be the eigenvalues  of $H$. Then it holds that
$$
\lambda_k(H)\leq\sigma_k(A), ~~~k=1,2,\ldots,m.
$$
\end{Thm}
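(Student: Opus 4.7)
The plan is to mimic the classical Fan-Hoffman argument through the dual quaternion Courant-Fischer principle of Theorem \ref{Lemma3}. For each $k\geq 2$ I will choose a specific $B_0\in\mathbb{DQ}^{m\times(k-1)}$ in the formula
$$\lambda^{(k)}(H)=\min_{B\in\mathbb{DQ}^{m\times(k-1)}}\max_{{\bf x}\in N(B^*)\backslash[{\bf 0}]}\|{\bf x}\|^{-2}({\bf x}^*H{\bf x})$$
so that the inner maximum is already dominated by $\sigma_k(A)$; the case $k=1$ is handled by Lemma \ref{Lemma1} with the same pointwise estimate below but no subspace restriction.

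The key pointwise inequality chains three facts already in the paper. Since $H=(A+A^*)/2$ we have ${\bf x}^*H{\bf x}={\rm Re}({\bf x}^*A{\bf x})$; Proposition \ref{P6.5}(c) gives ${\bf x}^*H{\bf x}\leq|{\bf x}^*H{\bf x}|$; Proposition \ref{ReDQqin} gives $|{\rm Re}({\bf x}^*A{\bf x})|\leq|{\bf x}^*A{\bf x}|$; and the Cauchy-Schwarz inequality of Proposition \ref{Ch-SW-In} gives $|{\bf x}^*A{\bf x}|=|\langle A{\bf x},{\bf x}\rangle|\leq\|A{\bf x}\|\,\|{\bf x}\|$. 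Chained together, these yield
$$\|{\bf x}\|^{-2}({\bf x}^*H{\bf x})\leq\|{\bf x}\|^{-1}\|A{\bf x}\|,$$
so everything reduces to bounding $\|A{\bf x}\|$ by $\sigma_k(A)\|{\bf x}\|$ on a suitable subspace.

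That subspace comes from the SVD of Theorem \ref{SVD-DQM}. Writing $A=U\Sigma V^*$ with $U=[{\bf u}_1,\ldots,{\bf u}_m]$, $V=[{\bf v}_1,\ldots,{\bf v}_m]$ and $\Sigma={\rm diag}(\sigma_1,\ldots,\sigma_m)$, I take $B_0=[{\bf v}_1,\ldots,{\bf v}_{k-1}]$. Since $\{{\bf v}_j\}$ is an orthonormal basis of $\mathbb{DQ}^m$, every ${\bf x}\in N(B_0^*)$ expands as ${\bf x}=\sum_{j\geq k}{\bf v}_j\alpha_j$, and the orthonormality of $\{{\bf u}_j\}$ yields
$$\|A{\bf x}\|^2=\sum_{j\geq k}\sigma_j^2|\alpha_j|^2\leq\sigma_k^2\|{\bf x}\|^2.$$

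The main obstacle is that passing from this squared inequality to $\|A{\bf x}\|\leq\sigma_k\|{\bf x}\|$ is not automatic on $\mathbb{D}_+$, because Proposition \ref{P6.5}(f) demands that the larger side be appreciable. I therefore split into two regimes. If $\sigma_k$ is appreciable, Proposition \ref{P6.5}(f) yields $\|A{\bf x}\|\leq\sigma_k\|{\bf x}\|$ directly when $\|A{\bf x}\|$ is also appreciable, and the inequality is trivial when $\|A{\bf x}\|$ is infinitesimal. If $\sigma_k$ is infinitesimal, then every $\sigma_j$ with $j\geq k$ is infinitesimal, so $A{\bf x}$ is itself infinitesimal; expanding the $\epsilon$-factors gives $(A{\bf x})_{\rm I}=\sum_{j\geq k}\sigma_{j,{\rm I}}{\bf u}_{j,{\rm st}}\alpha_{j,{\rm st}}$, and since the standard parts $\{{\bf u}_{j,{\rm st}}\}$ and $\{{\bf v}_{j,{\rm st}}\}$ are orthonormal in $\mathbb{Q}^m$ (as standard parts of dual quaternion unitary matrices), the ordinary quaternion inequality yields $\|(A{\bf x})_{\rm I}\|\leq\sigma_{k,{\rm I}}\|{\bf x}_{\rm st}\|$; multiplying by $\epsilon$ and using (\ref{Norm-DQV}) gives $\|A{\bf x}\|\leq\sigma_k\|{\bf x}_{\rm st}\|=\sigma_k\|{\bf x}\|$, the last equality holding because $\sigma_k$ is infinitesimal and $\epsilon^2=0$. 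Inserting this bound into the minimax formula closes the proof.
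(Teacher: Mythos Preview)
Your argument is correct and parallels the paper's closely: both reduce to the pointwise estimate ${\bf x}^*H{\bf x}\leq\|A{\bf x}\|\,\|{\bf x}\|$ via Proposition~\ref{ReDQqin} and the Cauchy--Schwarz inequality, and then feed this into the minimax principle together with the SVD. The paper reaches the same bound through the polar-type factorization $A=PQ$ with $P=UV^*$ unitary and $Q=V\Sigma V^*$ Hermitian, applying Cauchy--Schwarz to $\langle Q{\bf x},P^*{\bf x}\rangle$; since $\|Q{\bf x}\|^2={\bf x}^*Q^2{\bf x}={\bf x}^*A^*A{\bf x}=\|A{\bf x}\|^2$, this detour changes nothing, and your direct application to $\langle A{\bf x},{\bf x}\rangle$ is a mild simplification. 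Your explicit case split on whether $\sigma_k$ is appreciable is also a genuine improvement in rigor: the paper writes the bound as $(\|{\bf x}\|^{-2}{\bf x}^*Q^2{\bf x})^{1/2}$ and then identifies the minimax of this quantity with $(\lambda_k(Q^2))^{1/2}=\sigma_k(A)$ without addressing how to take the square root when $\sigma_k$ (and hence ${\bf x}^*Q^2{\bf x}$) may be infinitesimal, whereas your argument in that regime drops to the quaternion level and sidesteps the issue entirely.
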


\begin{proof}
By Theorem \ref{SVD-DQM}, there exist a dual quaternion unitary matrix $U\in \mathbb{DQ}^{m\times m}$ and a dual quaternion unitary matrix $V\in \mathbb{DQ}^{n\times n}$, such that (\ref{SVDDQMEQ}) holds. It is easy to see that, by (\ref{SVDDQMEQ}), we have
\begin{equation}\label{SVDDQMEQ-1}
A=PQ,
\end{equation}
where $P=UV^*$ and $Q=V\left[\begin{array}{cc}\Sigma_t&O\\
O&O
\end{array} \right]V^*$. Notice that $P$ is unitary, and $Q$ is a dual quaternion Hermitian matrix. Moreover, since $H$ is Hermitian, ${\bf x}^*H{\bf x}\in \mathbb{D}$ for any ${\bf x}\in \mathbb{DQ}^m\backslash[{\bf 0}]$. Hence, it holds that
$$
{\bf x}^*H{\bf x}={\rm Re}({\bf x}^*A{\bf x})={\rm Re}({\bf x}^*PQ{\bf x})\leq |{\bf x}^*PQ{\bf x}|,
$$
where the inequality is due to Proposition \ref{ReDQqin}. Consequently, we have
$$
{\bf x}^*H{\bf x}\leq |{\bf x}^*PQ{\bf x}|=|\langle Q{\bf x}, P^*{\bf x}\rangle|\leq \|Q{\bf x}\|\|P^*{\bf x}\|=({\bf x}^*Q^2{\bf x})^{\frac{1}{2}}\|{\bf x}\|,
$$
where the second inequality comes from Proposition \ref{Ch-SW-In} (Cauchy-Schwarz inequality), and the last equality is due to the fact that $Q$ and $P$ are Hermitian and unitary respectively. Hence, we have
$$
\|{\bf x}\|^{-2}({\bf x}^*H{\bf x})\leq\left(\|{\bf x}\|^{-2}({\bf x}^*Q^2{\bf x})\right)^{\frac{1}{2}}.
$$
Consequently, by Theorem \ref{Lemma3}, for $k=,1,2\ldots,m$,
$$
\begin{array}{lll}
\lambda_k(H)&=&\displaystyle\min_{B\in \mathbb{DQ}^{m\times (k-1)}}\max_{{\bf x}\in N(B^*)\backslash[{\bf 0}]}\|{\bf x}\|^{-2}({\bf x}^*H{\bf x})\\
&\leq&\displaystyle\min_{B\in \mathbb{DQ}^{m\times (k-1)}}\max_{{\bf x}\in N(B^*)\backslash[{\bf 0}]} \left(\|{\bf x}\|^{-2}({\bf x}^*Q^2{\bf x})\right)^{\frac{1}{2}}\\
&=&(\lambda_k(Q^2))^{\frac{1}{2}}\\
&=&(\lambda_k(A^*A))^{\frac{1}{2}}\\
&=&\sigma_k(A).
\end{array}$$
\end{proof}

\section{Generalized inverses of dual quaternion matrices}\label{GenInverseDQM}
In many applications, the involved dual quaternion matrix is singular or rectangular, hence, its inverse matrix does not exist. In this case, we should study generalized inverses of the involved dual quaternion matrix.  
We begin with introducing the following definition, which is similar to the concept of the generalized inverse for matrices in \cite{CW17,WWQ03}.

\begin{Def}\label{MPGIDEF}
For given $A\in \mathbb{DQ}^{m\times n}$. We say $G\in \mathbb{DQ}^{n\times m}$ is a dual quaternion generalized inverse of $A$, if it satisfies
\begin{equation*}\label{MPGI-4EQ}
(1)~AGA=A,~~(2)~GAG=G,~~(3)~(AG)^*=AG,~~{\rm and}~~(4)~(GA)^*=GA.
\end{equation*}
\end{Def}
From Definition \ref{MPGIDEF}, it is easy to see that, for a given $A\in \mathbb{DQ}^{m\times n}$, if its dual quaternion generalized inverse exists, then it is unique, denoted by $A^\dag$. It is well-known that the generalized inverses of complex matrices and quaternion matrices must exist \cite{WLZZ18}, which have unified representations and characterizations \cite{W98,WW03}. However, unlike the general complex and quaternion matrices,  for a given $A\in \mathbb{DQ}^{m\times n}$, its generalized inverse does not necessarily exist. For example, for $A=B\epsilon$ with $B\in \mathbb{Q}^{m\times n}\backslash\{O\}$, it is easy to verify that $AXA=O$ for any $X\in \mathbb{DQ}^{m\times n}$, which implies that the generalized inverse of $A$ must not exist. In this section, we focus on exploring the conditions for the existence of dual quaternion generalized inverses. In what follows, similar to quaternion matrices, when we say, for example, that the matrix $G$ is a $\{i,j,k\}$-dual quaternion generalized inverse of the matrix $A$, we mean that the dual quaternion generalized inverse $G$ satisfies the $i$th, $j$th, and $k$th conditions in Definition \ref{MPGIDEF}.
\begin{Prop}\label{DQ2I}
Let $A=A_{\rm st}+A_{\rm I}\epsilon\in \mathbb{DQ}^{m\times n}$ with $A_{\rm st}\neq O$. Then $X:=A_{\rm st}^\dag-A_{\rm st}^\dag A_{\rm I}A_{\rm st}^\dag\epsilon$ is a $\{2\}$-dual quaternion generalized inverse of $A$, where $A_{\rm st}^\dag$ is the generalized inverse of $A_{\rm st}$.
\end{Prop}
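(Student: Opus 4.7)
The plan is to verify the single required identity $XAX=X$ by direct block computation in the standard/infinitesimal components, exploiting the fact that $A_{\mathrm{st}}^\dag$ already satisfies the four Moore--Penrose axioms on the quaternion matrix $A_{\mathrm{st}}$. Since $\epsilon^2=0$, every product of two dual quaternion matrices collapses into a standard part plus a single infinitesimal coefficient, so the whole computation is really two quaternion-matrix identities.

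First I would write $X=X_{\mathrm{st}}+X_{\mathrm{I}}\epsilon$ with $X_{\mathrm{st}}=A_{\mathrm{st}}^\dag$ and $X_{\mathrm{I}}=-A_{\mathrm{st}}^\dag A_{\mathrm{I}}A_{\mathrm{st}}^\dag$, and expand $XA$ as
\[
XA = A_{\mathrm{st}}^\dag A_{\mathrm{st}} + \bigl(A_{\mathrm{st}}^\dag A_{\mathrm{I}} - A_{\mathrm{st}}^\dag A_{\mathrm{I}}A_{\mathrm{st}}^\dag A_{\mathrm{st}}\bigr)\epsilon,
\]
dropping the $\epsilon^2$ term. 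Then I would multiply on the right by $X$ and collect the standard and dual parts. The standard part of $XAX$ is $A_{\mathrm{st}}^\dag A_{\mathrm{st}}A_{\mathrm{st}}^\dag$, which equals $A_{\mathrm{st}}^\dag$ by axiom (2) applied to $A_{\mathrm{st}}^\dag$.

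The dual part of $XAX$ is
\[
-A_{\mathrm{st}}^\dag A_{\mathrm{st}}\,A_{\mathrm{st}}^\dag A_{\mathrm{I}}A_{\mathrm{st}}^\dag + \bigl(A_{\mathrm{st}}^\dag A_{\mathrm{I}} - A_{\mathrm{st}}^\dag A_{\mathrm{I}}A_{\mathrm{st}}^\dag A_{\mathrm{st}}\bigr)A_{\mathrm{st}}^\dag.
\]
Applying $A_{\mathrm{st}}^\dag A_{\mathrm{st}}A_{\mathrm{st}}^\dag = A_{\mathrm{st}}^\dag$ (axiom (2)) to the first and third summands produces $-A_{\mathrm{st}}^\dag A_{\mathrm{I}}A_{\mathrm{st}}^\dag + A_{\mathrm{st}}^\dag A_{\mathrm{I}}A_{\mathrm{st}}^\dag - A_{\mathrm{st}}^\dag A_{\mathrm{I}}A_{\mathrm{st}}^\dag = -A_{\mathrm{st}}^\dag A_{\mathrm{I}}A_{\mathrm{st}}^\dag$. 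Combining the two parts gives exactly $X$, establishing axiom (2) of Definition \ref{MPGIDEF} for $X$ with respect to $A$.

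There is really no obstacle: the only fact needed beyond bookkeeping is the existence of the quaternion Moore--Penrose inverse $A_{\mathrm{st}}^\dag$ (guaranteed in the quaternion setting) and the single identity $A_{\mathrm{st}}^\dag A_{\mathrm{st}}A_{\mathrm{st}}^\dag=A_{\mathrm{st}}^\dag$. The mild nuisance to watch for is that quaternion matrix multiplication is noncommutative, so I must preserve the order of the factors $A_{\mathrm{st}}^\dag$, $A_{\mathrm{st}}$ and $A_{\mathrm{I}}$ throughout the expansion and never attempt to cancel by, e.g., the full condition $A_{\mathrm{st}}A_{\mathrm{st}}^\dag=I$, which does not hold in general. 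The proof is therefore a short, direct verification and does not require any auxiliary lemma from Sections \ref{RighIN-Rank}--\ref{MinimaxPrin}.
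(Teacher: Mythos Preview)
Your proposal is correct and follows essentially the same approach as the paper: both directly expand $XAX$ in standard and infinitesimal parts using $\epsilon^2=0$, and simplify each of the three dual-part summands via the single identity $A_{\mathrm{st}}^\dag A_{\mathrm{st}}A_{\mathrm{st}}^\dag=A_{\mathrm{st}}^\dag$.
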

\begin{proof}
Since $A_{\rm st}\in \mathbb{Q}^{m\times n}\backslash\{O\}$, the generalized inverse $A_{\rm st}^\dag$ of $A_{\rm st}$ must exist. Moreover, we have
$$
\begin{array}{lll}
XAX&=&(A_{\rm st}^\dag-A_{\rm st}^\dag A_{\rm I}A_{\rm st}^\dag\epsilon)(A_{\rm st}+A_{\rm I}\epsilon)(A_{\rm st}^\dag-A_{\rm st}^\dag A_{\rm I}A_{\rm st}^\dag\epsilon)\\
&=&\left((A_{\rm st}^\dag A_{\rm st}+(A_{\rm st}^\dag A_{\rm I} -A_{\rm st}^\dag A_{\rm I}A_{\rm st}^\dag A_{\rm st})\epsilon\right)(A_{\rm st}^\dag-A_{\rm st}^\dag A_{\rm I}A_{\rm st}^\dag\epsilon)\\
&=&A_{\rm st}^\dag A_{\rm st}A_{\rm st}^\dag+(A_{\rm st}^\dag A_{\rm I}A_{\rm st}^\dag -A_{\rm st}^\dag A_{\rm I}A_{\rm st}^\dag A_{\rm st}A_{\rm st}^\dag-A_{\rm st}^\dag A_{\rm st}A_{\rm st}^\dag A_{\rm I}A_{\rm st}^\dag)\epsilon\\
&=&A_{\rm st}^\dag -A_{\rm st}^\dag A_{\rm I}A_{\rm st}^\dag \epsilon\\
&=&X,
\end{array}
$$
where the last equality comes from $A_{\rm st}^\dag A_{\rm st}A_{\rm st}^\dag =A_{\rm st}^\dag$ since $A_{\rm st}^\dag$ is the generalized inverse of  $A_{\rm st}$. Hence, $X$ satisfies the condition (2) in Definition \ref{MPGI-4EQ}. 
\end{proof}

Generally speaking, the dual quaternion matrix $X$ in Proposition \ref{DQ2I} does not satisfy other three conditions in Definition \ref{MPGI-4EQ}.
Even in the case of dual number matrices, studying its various dual generalized inverses is much more difficult than that of complex matrices, see \cite{FPU18}. Now we try to present necessary and sufficient conditions for the dual quaternion matrix $X$ in Proposition \ref{DQ2I} to be the $\{1\}$-, $\{3\}$- and $\{4\}$-dual quaternion generalized inverses of $A$ respectively.

\begin{Prop}\label{NScondition-DQ1I}
Let $A\in\mathbb{DQ}^{m\times n}$. Then a necessary and sufficient condition for a matrix $X$ given in Proposition \ref{DQ2I} to be a $\{1\}$-dual quaternion generalized inverse of $A$ is
$$
(I_m-A_{\rm st} A_{\rm st}^\dag)A_{\rm I}(I_n-A_{\rm st}^\dag A_{\rm st})=O.
$$
\end{Prop}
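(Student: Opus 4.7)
The plan is to verify condition $(1)$ of Definition~\ref{MPGIDEF}, namely $AXA = A$, by direct expansion, and then show that after cancellation the residual infinitesimal term factors exactly as $(I_m - A_{\rm st}A_{\rm st}^\dag)\,A_{\rm I}\,(I_n - A_{\rm st}^\dag A_{\rm st})$. Since $X$ has already been shown in Proposition~\ref{DQ2I} to satisfy condition $(2)$, the $\{1\}$-condition is the only remaining requirement, and the dual quaternion matrix equation $AXA=A$ splits into one equation on the standard part and one equation on the infinitesimal part, each of which lives in $\mathbb{Q}^{m\times n}$.

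First I would compute $AX$ and then $(AX)A$ using the rule $(P_{\rm st}+P_{\rm I}\epsilon)(Q_{\rm st}+Q_{\rm I}\epsilon) = P_{\rm st}Q_{\rm st} + (P_{\rm st}Q_{\rm I}+P_{\rm I}Q_{\rm st})\epsilon$ valid since $\epsilon^2=0$. Using $A_{\rm st}A_{\rm st}^\dag A_{\rm st} = A_{\rm st}$, the standard part of $AXA$ collapses to $A_{\rm st}$ automatically, so the condition $AXA = A$ reduces to the single quaternion identity
\begin{equation*}
A_{\rm st}A_{\rm st}^\dag A_{\rm I} + A_{\rm I}A_{\rm st}^\dag A_{\rm st} - A_{\rm st}A_{\rm st}^\dag A_{\rm I}A_{\rm st}^\dag A_{\rm st} \;=\; A_{\rm I}.
\end{equation*}

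The next step is to rearrange this identity as
\begin{equation*}
A_{\rm I} - A_{\rm st}A_{\rm st}^\dag A_{\rm I} - A_{\rm I}A_{\rm st}^\dag A_{\rm st} + A_{\rm st}A_{\rm st}^\dag A_{\rm I}A_{\rm st}^\dag A_{\rm st} \;=\; O,
\end{equation*}
and then to observe that the left-hand side factors as
\begin{equation*}
\bigl(I_m - A_{\rm st}A_{\rm st}^\dag\bigr)\,A_{\rm I}\,\bigl(I_n - A_{\rm st}^\dag A_{\rm st}\bigr).
\end{equation*}
This is exactly the condition in the statement, giving both the necessity and sufficiency direction in one shot: the equality $AXA=A$ in $\mathbb{DQ}^{m\times n}$ is equivalent to the vanishing of this factored quaternion matrix.

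There is no real obstacle here; the only thing to be careful about is preserving the order of noncommuting quaternion factors during the expansion and refactoring (since $A_{\rm st}A_{\rm st}^\dag$ and $A_{\rm st}^\dag A_{\rm st}$ are the two distinct Hermitian projections onto the range and coimage of $A_{\rm st}$ respectively, and both appear). Once the expansion is written out line by line and the identity $A_{\rm st}A_{\rm st}^\dag A_{\rm st}=A_{\rm st}$ is invoked in the two middle terms, the factorization is transparent and the proof is complete.
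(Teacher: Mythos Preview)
Your proposal is correct and follows essentially the same route as the paper: direct expansion of $AXA$ using $\epsilon^2=0$, use of $A_{\rm st}A_{\rm st}^\dag A_{\rm st}=A_{\rm st}$ to kill the standard part, and then factoring the infinitesimal condition as $(I_m-A_{\rm st}A_{\rm st}^\dag)A_{\rm I}(I_n-A_{\rm st}^\dag A_{\rm st})=O$. One small verbal slip: being a $\{1\}$-inverse means satisfying condition~(1) alone, so the fact that $X$ already satisfies~(2) is irrelevant here rather than leaving (1) as ``the only remaining requirement''; but this does not affect the argument.
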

\begin{proof}
Since $X=A_{\rm st}^\dag-A_{\rm st}^\dag A_{\rm I}A_{\rm st}^\dag\epsilon$, it is not difficult to know that
$$
AXA=A_{\rm st}A_{\rm st}^\dag A_{\rm st}+(A_{\rm I}A_{\rm st}^\dag A_{\rm st}-A_{\rm st}A_{\rm st}^\dag A_{\rm I}A_{\rm st}^\dag A_{\rm st}+A_{\rm st}A_{\rm st}^\dag A_{\rm I})\epsilon.
$$
Since $A_{\rm st}A_{\rm st}^\dag A_{\rm st}=A_{\rm st}$, a necessary and sufficient condition for $X$ satisfying condition (1) in Definition \ref{MPGIDEF} is
$$
A_{\rm I}A_{\rm st}^\dag A_{\rm st}-A_{\rm st}A_{\rm st}^\dag A_{\rm I}A_{\rm st}^\dag A_{\rm st}+A_{\rm st}A_{\rm st}^\dag A_{\rm I}=A_{\rm I},
$$
which can be rewritten as $(I_m-A_{\rm st} A_{\rm st}^\dag)A_{\rm I}(I_n-A_{\rm st}^\dag A_{\rm st})=O$.
\end{proof}

\begin{Prop}\label{NScondition-DQ3I}
Let $A\in\mathbb{DQ}^{m\times n}$. Then a necessary and sufficient condition for a matrix $X$ given in Proposition \ref{DQ2I} to be a $\{3\}$-dual quaternion generalized inverse of $A$ is
$$
(I_m-A_{\rm st} A_{\rm st}^\dag)A_{\rm I}A_{\rm st}^\dag=(A_{\rm st}^*)^\dag A_{\rm I}^*(I_m-A_{\rm st} A_{\rm st}^\dag),
$$
i.e., the dual quaternion matrix $(I_m-A_{\rm st} A_{\rm st}^\dag)A_{\rm I}A_{\rm st}^\dag$ is Hermitian.
\end{Prop}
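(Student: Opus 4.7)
The plan is to reduce condition (3), namely $(AX)^* = AX$, to a pair of equations by separating standard and infinitesimal parts, and then show that the standard-part equation is automatic while the infinitesimal-part equation is exactly the stated identity. This mirrors the bookkeeping performed in Proposition \ref{NScondition-DQ1I}, only now involving conjugate transposes.

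First I would expand
\begin{equation*}
AX = (A_{\rm st}+A_{\rm I}\epsilon)(A_{\rm st}^\dag - A_{\rm st}^\dag A_{\rm I} A_{\rm st}^\dag\epsilon)
= A_{\rm st} A_{\rm st}^\dag + \bigl(A_{\rm I} A_{\rm st}^\dag - A_{\rm st} A_{\rm st}^\dag A_{\rm I} A_{\rm st}^\dag\bigr)\epsilon,
\end{equation*}
and rewrite the dual part compactly as $(I_m - A_{\rm st} A_{\rm st}^\dag) A_{\rm I} A_{\rm st}^\dag$. Taking the conjugate transpose termwise, and using the standard facts from quaternion matrix theory that $A_{\rm st} A_{\rm st}^\dag$ is Hermitian (this is property (3) in the definition of $A_{\rm st}^\dag$) and that $(A_{\rm st}^\dag)^* = (A_{\rm st}^*)^\dag$, I obtain
\begin{equation*}
(AX)^* = A_{\rm st} A_{\rm st}^\dag + (A_{\rm st}^*)^\dag A_{\rm I}^* (I_m - A_{\rm st} A_{\rm st}^\dag)\epsilon.
\end{equation*}

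Equating $AX$ and $(AX)^*$ as dual quaternion matrices amounts to equating standard and infinitesimal parts separately. The standard parts match automatically because $A_{\rm st} A_{\rm st}^\dag$ is Hermitian, so the entire content of condition (3) is
\begin{equation*}
(I_m - A_{\rm st} A_{\rm st}^\dag) A_{\rm I} A_{\rm st}^\dag = (A_{\rm st}^*)^\dag A_{\rm I}^* (I_m - A_{\rm st} A_{\rm st}^\dag),
\end{equation*}
which is precisely the stated necessary and sufficient condition. Finally, to justify the closing remark that this is the Hermitian condition on the quaternion matrix $M := (I_m - A_{\rm st} A_{\rm st}^\dag) A_{\rm I} A_{\rm st}^\dag$, I would compute $M^*$ using $(I_m - A_{\rm st} A_{\rm st}^\dag)^* = I_m - A_{\rm st} A_{\rm st}^\dag$ and $(A_{\rm st}^\dag)^* = (A_{\rm st}^*)^\dag$ to recognize $M^*$ as the right-hand side of the displayed identity.

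I do not anticipate any real obstacle here: once the expansion of $AX$ is done, the argument is essentially bookkeeping, and the only tool beyond elementary algebra is the Hermitian property of $A_{\rm st} A_{\rm st}^\dag$ together with the compatibility of the quaternion Moore–Penrose inverse with conjugate transposition. The cleanest way to present it is as a single chain of equalities followed by the observation that the standard parts cancel, leaving the displayed identity as the sole condition.
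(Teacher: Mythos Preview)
Your proposal is correct and follows essentially the same route as the paper's own proof: both expand $AX$, separate standard and infinitesimal parts, note that the standard part is automatically Hermitian since $A_{\rm st}A_{\rm st}^\dag$ is, and reduce the infinitesimal-part equality to the displayed identity using $(A_{\rm st}^\dag)^* = (A_{\rm st}^*)^\dag$. The only cosmetic difference is that you factor the dual part as $(I_m - A_{\rm st}A_{\rm st}^\dag)A_{\rm I}A_{\rm st}^\dag$ before taking the conjugate transpose, whereas the paper expands first and factors at the end.
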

\begin{proof}
Since $X=A_{\rm st}^\dag-A_{\rm st}^\dag A_{\rm I}A_{\rm st}^\dag\epsilon$, we have
$$
AX=A_{\rm st}A_{\rm st}^\dag+(A_{\rm I}A_{\rm st}^\dag -A_{\rm st}A_{\rm st}^\dag A_{\rm I}A_{\rm st}^\dag )\epsilon
$$
and
$$
(AX)^*=(A_{\rm st}A_{\rm st}^\dag)^*+\left(A_{\rm I}A_{\rm st}^\dag -A_{\rm st}A_{\rm st}^\dag A_{\rm I}A_{\rm st}^\dag \right)^*\epsilon=A_{\rm st}A_{\rm st}^\dag+\left((A_{\rm st}^\dag)^* A_{\rm I}^*-(A_{\rm st}^\dag)^* A_{\rm I}^*A_{\rm st}A_{\rm st}^\dag \right)\epsilon,
$$
where the last equality is due to the fact that $A_{\rm st}^\dag$ is the generalized inverse of $A_{\rm st}$. Consequently, we know that a necessary and sufficient condition for $X$ satisfying $(AX)^*=AX$ is
$$
(A_{\rm st}^\dag)^* A_{\rm I}^*-(A_{\rm st}^\dag)^* A_{\rm I}^*A_{\rm st}A_{\rm st}^\dag =A_{\rm I}A_{\rm st}^\dag -A_{\rm st}A_{\rm st}^\dag A_{\rm I}A_{\rm st}^\dag,
$$
which can be rewritten as $(I_m-A_{\rm st} A_{\rm st}^\dag)A_{\rm I}A_{\rm st}^\dag=(A_{\rm st}^*)^\dag A_{\rm I}^*(I_m-A_{\rm st} A_{\rm st}^\dag)
$, since $(A_{\rm st}^\dag)^*=(A_{\rm st}^*)^\dag$.
\end{proof}

\begin{Prop}\label{NScondition-DQ4I}
Let $A\in\mathbb{DQ}^{m\times n}$. Then a necessary and sufficient condition for a matrix $X$ given in Proposition \ref{DQ2I} to be a $\{4\}$-dual quaternion generalized inverse of $A$ is
$$
A_{\rm st}^\dag A_{\rm I}(I_n-A_{\rm st}^\dag A_{\rm st})=(I_n-A_{\rm st}^\dag A_{\rm st})A_{\rm I}^* (A_{\rm st}^*)^\dag,
$$
i.e., the dual quaternion matrix $A_{\rm st}^\dag A_{\rm I}(I_n-A_{\rm st}^\dag A_{\rm st})$ is Hermitian.
\end{Prop}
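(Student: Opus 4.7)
The plan is to mirror the proof of Proposition \ref{NScondition-DQ3I}, but working with the product $XA$ in place of $AX$, and exploiting the fact that the generalized inverse $A_{\rm st}^\dag$ of the quaternion matrix $A_{\rm st}$ satisfies the Hermitian identity $(A_{\rm st}^\dag A_{\rm st})^* = A_{\rm st}^\dag A_{\rm st}$ (this is condition (4) for $A_{\rm st}^\dag$ in $\mathbb{Q}^{n\times m}$).

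First I would compute $XA$ explicitly. Substituting $X = A_{\rm st}^\dag - A_{\rm st}^\dag A_{\rm I} A_{\rm st}^\dag \epsilon$ and $A = A_{\rm st} + A_{\rm I}\epsilon$, and using $\epsilon^2 = 0$, I obtain
$$
XA = A_{\rm st}^\dag A_{\rm st} + \bigl(A_{\rm st}^\dag A_{\rm I} - A_{\rm st}^\dag A_{\rm I} A_{\rm st}^\dag A_{\rm st}\bigr)\epsilon.
$$
Next I would compute $(XA)^*$. Taking conjugate transposes term by term, and using $(A_{\rm st}^\dag A_{\rm st})^* = A_{\rm st}^\dag A_{\rm st}$ together with $(A_{\rm st}^\dag)^* = (A_{\rm st}^*)^\dag$, the standard part is again $A_{\rm st}^\dag A_{\rm st}$, and the infinitesimal part becomes $A_{\rm I}^*(A_{\rm st}^*)^\dag - A_{\rm st}^\dag A_{\rm st}\cdot A_{\rm I}^*(A_{\rm st}^*)^\dag$.

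Equating the standard parts of $XA$ and $(XA)^*$ is automatic, so $(XA)^* = XA$ reduces to the equality of the infinitesimal parts, namely
$$
A_{\rm st}^\dag A_{\rm I} - A_{\rm st}^\dag A_{\rm I} A_{\rm st}^\dag A_{\rm st} = A_{\rm I}^*(A_{\rm st}^*)^\dag - A_{\rm st}^\dag A_{\rm st}\cdot A_{\rm I}^*(A_{\rm st}^*)^\dag.
$$
Factoring $(I_n - A_{\rm st}^\dag A_{\rm st})$ on the right of the left-hand side and on the left of the right-hand side yields exactly
$$
A_{\rm st}^\dag A_{\rm I}(I_n-A_{\rm st}^\dag A_{\rm st}) = (I_n-A_{\rm st}^\dag A_{\rm st})A_{\rm I}^*(A_{\rm st}^*)^\dag,
$$
which is the stated condition. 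Finally, since $(I_n - A_{\rm st}^\dag A_{\rm st})^* = I_n - A_{\rm st}^\dag A_{\rm st}$ and $(A_{\rm st}^\dag A_{\rm I})^* = A_{\rm I}^* (A_{\rm st}^*)^\dag$, the right-hand side is exactly the conjugate transpose of the left-hand side, so the condition is equivalent to $A_{\rm st}^\dag A_{\rm I}(I_n - A_{\rm st}^\dag A_{\rm st})$ being Hermitian.

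This proof is essentially a routine symbolic computation; I do not anticipate any conceptual obstacle. The only point requiring a small amount of care is keeping track of the noncommutativity of quaternion multiplication when taking conjugate transposes, and correctly invoking the Moore--Penrose identities $A_{\rm st} A_{\rm st}^\dag A_{\rm st} = A_{\rm st}$, $(A_{\rm st}^\dag A_{\rm st})^* = A_{\rm st}^\dag A_{\rm st}$, and $(A_{\rm st}^\dag)^* = (A_{\rm st}^*)^\dag$ at the right places.
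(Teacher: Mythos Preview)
Your proposal is correct and follows exactly the approach the paper indicates: the paper's own proof simply states that it can be proved by a method similar to the proof of Proposition \ref{NScondition-DQ3I}, and your computation of $XA$ and $(XA)^*$ carries this out in detail. The key ingredients you identify---$(A_{\rm st}^\dag A_{\rm st})^* = A_{\rm st}^\dag A_{\rm st}$ and $(A_{\rm st}^\dag)^* = (A_{\rm st}^*)^\dag$---are precisely the analogues of those used in the $\{3\}$-inverse case.
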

\begin{proof}
It can be proved by a method similar to the proof of Proposition \ref{NScondition-DQ3I}
\end{proof}

\begin{ReK}
When ${\rm rank}(A)=n$, we have ${\rm rank}(A_{\rm st})=n$ from argument in Section \ref{RighIN-Rank}. Consequently, by generalized inverse theory on quaternion matrices \cite{WLZZ18}, it holds that $A_{\rm st}^\dag A_{\rm st}=I_n$. By Propositions \ref{NScondition-DQ1I} and \ref{NScondition-DQ4I}, we know that the matrix $X$ in Proposition \ref{DQ2I} is exactly the $\{1,2,4\}$-dual quaternion generalized inverse of $A$.
\end{ReK}

Below we discuss a special case of dual quaternion matrices, where the dual quaternion generalized inverses of the dual quaternion matrices must exist and has a clear expression. 


\begin{Prop}
Let $A\in\mathbb{DQ}^{m\times n}$. Suppose that its singular value decomposition is given by (\ref{SVDDQMEQ}), and all nonzero singular values of $A$ are appreciable, i.e., $t=r$ in (\ref{SVDDQMEQ}). Then
\begin{equation}
A^\dag=V\left[\begin{array}{cc}
\Sigma_t^{-1}&O\\
O&O
\end{array}\right]_{n\times m} U^*,
\end{equation}
where $\Sigma_t^{-1}={\rm diag}(\mu_1^{-1},\mu_2^{-1},\ldots,\mu_t^{-1})$.
\end{Prop}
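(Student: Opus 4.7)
The strategy is to verify directly that the matrix
\[
G := V\begin{bmatrix} \Sigma_t^{-1} & O \\ O & O \end{bmatrix}_{n\times m} U^*
\]
satisfies all four conditions of Definition \ref{MPGIDEF}. Since the dual quaternion generalized inverse is unique when it exists (as noted right after Definition \ref{MPGIDEF}), this gives both existence and the claimed formula $A^\dag = G$.

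First, I need to check that $G$ is well-defined. The hypothesis $t=r$ means every diagonal entry $\mu_i$ of $\Sigma_t$ is a positive appreciable dual number. From Section \ref{Du-number}, any appreciable dual number is invertible, with $\mu_i^{-1} = \mu_{i,{\rm st}}^{-1} - \mu_{i,{\rm st}}^{-2}\mu_{i,{\rm I}}\epsilon$. Thus $\Sigma_t^{-1} := {\rm diag}(\mu_1^{-1},\ldots,\mu_t^{-1})$ is a well-defined diagonal dual number matrix with $\Sigma_t \Sigma_t^{-1} = \Sigma_t^{-1}\Sigma_t = I_t$.

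The next step is a block-matrix computation exploiting unitarity $U^*U=I_m$ and $V^*V=I_n$. Plugging the SVD (\ref{SVDDQMEQ}) of $A$ and the definition of $G$ into the four products gives
\[
AG = U\begin{bmatrix} I_t & O \\ O & O \end{bmatrix}U^*, \qquad GA = V\begin{bmatrix} I_t & O \\ O & O \end{bmatrix}V^*,
\]
and, chaining another factor through,
\[
AGA = U\begin{bmatrix} \Sigma_t & O \\ O & O \end{bmatrix}V^* = A, \qquad GAG = V\begin{bmatrix} \Sigma_t^{-1} & O \\ O & O \end{bmatrix}U^* = G,
\]
which immediately yields conditions (1) and (2). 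For conditions (3) and (4), the central block-diagonal matrix $\mathrm{diag}(I_t,O)$ is a real (hence Hermitian) matrix, so the expressions for $AG$ and $GA$ above are of the form $W H W^*$ with $H=H^*$. Taking conjugate transpose gives $(WHW^*)^* = WH^*W^* = WHW^*$, so $(AG)^*=AG$ and $(GA)^*=GA$.

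There is essentially no serious obstacle here, since once invertibility of each $\mu_i$ is in hand, the classical SVD-based formula for the Moore-Penrose inverse carries over mechanically; the only point requiring any care is respecting the noncommutativity of dual quaternion multiplication, which causes no trouble because $U$ and $V$ are only ever multiplied against the central real-diagonal blocks. Uniqueness of the generalized inverse (established immediately after Definition \ref{MPGIDEF}) then allows us to write $A^\dag = G$, completing the proof.
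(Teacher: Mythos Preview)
Your proof is correct and follows exactly the same approach as the paper, which simply states that the four conditions in Definition \ref{MPGIDEF} are satisfied ``by a direct verification''; you have merely written out that verification explicitly. The only additional care you took---checking that each $\mu_i$ is invertible because it is appreciable, and noting that the central real block $\mathrm{diag}(I_t,O)$ is Hermitian---is appropriate and fills in the details the paper leaves implicit.
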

\begin{proof}
By a direct verification, we can know that all conditions (1)-(4) in Definition \ref{MPGI-4EQ} are satisfied. Hence, we obtain the desired conclusion.
\end{proof}

\section{Conclusion}\label{Conclusion}
In this paper, we introduced three different right linear independency concepts for dual quaternion vectors, and studied some related basic properties of dual quaternion vectors and dual quaternion matrices. We presented a minimax principle for eigenvalues of dual quaternion Hermitian matrices as well as a dual quaternion matrix version of the well-known Fan-Hoffman inequality of complex matrices. Finally, by introducing the concept of Moore-Penrose type generalized inverses of dual quaternion matrices, we presented necessary and sufficient conditions for a dual quaternion matrix to be one of four types of generalized inverses of another dual quaternion matrix. This is a new area of applied mathematics. More problems are worth exploring, such as the physical meaning of the minimax principle of dual quaternion Hermitian matrices and the application of dual quaternion matrices in multi-agent formation control.

\end{document}